 \newtheorem{thm}{Theorem}[section]
 \newtheorem{lem}[thm]{Lemma}
 \newtheorem{prop}[thm]{Proposition}
 \theoremstyle{definition}
 \theoremstyle{remark}
 \newtheorem{rem}[thm]{Remark}
\DeclareSymbolFont{cyrletters}{OT2}{wncyr}{m}{n}
\DeclareMathSymbol{\Lfun}{\beta}{cyrletters}{"62}
\DeclareMathSymbol{\berd}{\beta}{cyrletters}{"42}
\numberwithin{equation}{section}
\numberwithin{figure}{section}
\newcommand{\e}{\mathrm e}
\newcommand{\C}{{\mathbb C}}
\newcommand{\esssup}{\,{\rm ess}\,{\rm sup}\,}
\newcommand{\diff}{{\mathrm d}}
\newcommand{\imag}{{\mathrm i}}
\newcommand{\dist}{\operatorname{dist}}
\begin{document}
%
\title[Bulk asymptotics for polyanalytic correlation kernels]
{Bulk asymptotics for polyanalytic correlation kernels}
\author[Haimi]
{Antti Haimi}

\address{Haimi: Department of Mathematics\\
The Royal Institute of Technology\\
S -- 100 44 Stockholm\\
SWEDEN}

\email{anttih@kth.se}

\keywords{Polyanalytic function, determinantal point process, Landau level, Bergman kernel}
\date{\today}

\begin{abstract}
For a weight function $Q:\mathbb{C} \to \mathbb{R}$ and a positive scaling parameter $m$, we study reproducing kernels $K_{q,mQ,n}$of the polynomial spaces
\[ A^2_{q,mQ, n} := \text{span}_{\mathbb{C}} \{ \bar z^r z^j \mid 0 \leq r \leq q-1, 0 \leq j \leq n-1\} \]
equipped with the inner product from the space $L^2\big(e^{-mQ(z)} \diff A(z) \big)$. Here
 $\diff A$ denotes a suitably normalized area measure on 
$\mathbb{C}$. 
For a point $z_0$ belonging to the interior of certain compact set $\mathcal{S}$ 
and satisfying $\Delta Q(z_0) > 0$, 
we define the rescaled coordinates
\[
z= z_0 + \frac{\xi}{\sqrt{m \Delta Q(z_0)}}, \quad w= z_0 + \frac{\lambda}{\sqrt{m \Delta Q(z_0)}}.
\]
The following universality result is proved in the case $q=2$:  
\begin{multline*}
 \frac{1}{m \Delta Q(z_0)}| K_{q,mQ,n}(z, w)| 
e^{-\frac12 mQ(z)
-\frac12 mQ (w)} 
\to  |L^1_{q-1}(|\xi - \lambda|^2)| e^{-\frac12|\xi-\lambda|^2}
\end{multline*}
as $m,n \to \infty$ while $n \geq m-M$ for any fixed $M > 0$, uniformly for $(\xi, \lambda)$ 
in compact subsets of $\mathbb{C}^2$. The notation $L^1_{q-1}$ stands for the 
associated Laguerre polynomial with parameter $1$ and degree $q-1$.  
This generalizes a result of Ameur, Hedenmalm and Makarov concerning analytic polynomials to \emph{bianalytic polynomials}.
We also discuss how to generalize the result to $q > 2$.  
Our methods include a simplification of a Bergman kernel expansion algorithm of 
Berman, Berndtsson and Sj\"ostrand in the one compex variable setting, and extension 
to the context of polyanalytic functions. We also study off-diagonal behaviour of the kernels $K_{q,mQ,n}$.  
\end{abstract}

\maketitle

\addtolength{\textheight}{2.2cm}







\section{Introduction}
\subsection{Notation}
We will write $\partial X$ and $\mathrm{int}(X)$
for the boundary and the interior of a subset $X$ of the complex plane $\C$.
By $\mathrm{1}_X$ we mean the characteristic function of the set $X$. 
We let 
\[
\diff A(z)=\pi^{-1}\diff x \diff y,\quad\text{where}\,\,\,
z=x+\imag y\in\C,
\]
be the normalized area measure in $\C$, and use the standard Wirtinger 
derivatives
\[
\partial_z:=\tfrac12(\partial_x-\imag\partial_y),\,\,\,
\bar\partial_z:=\tfrac12(\partial_x+\imag\partial_y).
\] 
Will will often omit the subscripts if there is no risk of confusion. 
We write $\Delta= \partial \bar \partial$, and it can be observed that this 
equals to one quarter of the usual Laplacian. 
We write $\mathbb{D}$ for the open unit disk, and more generally 
$\mathbb{D}(z, r)$ for the disk with center $z$ and radius $r$. 
Given a Lebesgue measurable function $w: \mathbb{C} \to \mathbb{R}$, we denote  
by $L^2(w)$ the space of measurable functions $\mathbb{C}\to \mathbb{C}$ which are square-integrable
with respect to the measure $w(z) \diff A(z)$. 

\subsection{Spaces of polyanalytic polynomials}
Let $Q:\mathbb{C} \to \mathbb{R}$ be a continuous function satisfying
\begin{equation} \label{Qgrowthcondition}
 Q(z) \geq (1+\epsilon) \log |z|^2, \qquad |z| \geq C 
\end{equation}
for two positive numbers $\epsilon$ and $C$. This function will be referred to as the \emph{weight}. 
We set
\[ \mathrm{Pol}_{q,n} := \text{span}_{\mathbb{C}} \{ \bar z^r z^j \mid 0 \leq r \leq q-1, 0 \leq j \leq n-1 \}, \]
and 
\[
A^2_{q,mQ,n}:=\mathrm{Pol}_{q,n} \cap L^2\big(e^{-mQ(z)} \big).
\]
The space $A^2_{q,mQ,n}$ is a finite dimensional, and thus closed, 
subspace of $L^2(e^{-mQ(z)})$. 
We see that when $m \geq n+q-1$,  the growth condition on $Q$ implies that 
$A^2_{q,mQ,n}$  contains the whole $\mathrm{Pol}_{q,n}$. 

Notice that $A^2_{1,mQ,n}$
consists of analytic polynomials of degree at most $n-1$. 
For a more general $q \geq 1$, 
functions in the spaces $A^2_{q,mQ,n}$ will be called \emph{$q$-analytic polynomials}. 

By a \emph{$q$-analytic function}, we mean a continuous function that satisfies the equation $\bar\partial^q f=0$ in the sense of distribution theory. 
A function which is $q$-analytic for some $q \geq 1$ is called \emph{polyanalytic}.
Obviously, $q$-analytic polynomials form a subclass of $q$-analytic functions. It is also 
easy to see that a $q$-analytic function $f$ can be written as 
\begin{equation} \label{polyanaldecompo}
 f(z) = \sum_{j=0}^{q-1} \bar z^j f_j(z) 
\end{equation}
for some analytic functions $f_j$. 
The decomposition provides a correspondance between $q$-analytic functions and vector-valued analytic functions; 
this connection is explained in \cite{hh2} in more detail. 

The space $A^2_{q,mQ,n}$ 
possesses the reproducing kernel
\[
K_{q,mQ,n}(z,w) = \sum_j e_j(z) \overline{e_j(w)},
\]
where $\{e_j \}$ is any orthonormal basis for $A^2_{q,mQ,n}$.
It is well-known that $K_{q,mQ,n}$ does not depend on the choice of the basis and 
that the reproducing property 
\[ 
p(z) = \int_{\mathbb{C}} p(w) K_{q,mQ,n}(z,w) e^{-mQ(w)} \diff A(w),
\]
holds for all $p \in A^2_{q,mQ,n}$. 


\subsection{Determinantal point processes}
Assuming $m\geq n+q-1$, which implies that $A^2_{q,mQ,n}$ is $nq$-dimensional, we use  $K_{q,mQ,n}$ to define the following probability distribution on $C^{nq}$:
\begin{equation} \label{probdist}
\diff P_{q,mQ,n}(z_1,\dots, z_{nq}) := \Lambda_{q,mQ,n}(z_1,\dots, z_{nq}) \diff A(z_1) \dots \diff A(z_{nq}),
\end{equation}
where 
\begin{equation}
\Lambda_{q,mQ,n}(z_1,\dots, z_{nq})
:= \frac{1}{(nq)!} \det \bigg[ K_{q,mQ,n}(z_i,z_j)\bigg]_{i,j=1}^{nq} e^{-m\sum_{j=1}^{nq} Q(z_j)}.
\end{equation}
This is a particular instance of a so called \emph{determinantal point process}. That $\diff P_{q,mQ,n}$  is a probability measure
follows from standard arguments  (see any book on random matrices, e.g., \cite{Forr}, \cite{mehta}); this depends only on the fact that $K_{q,mQ,n}$ is a kernel 
of a projection to a $nq$-dimensional subspace of $L^2(e^{-mQ})$ . It is customary 
to identify all the $nq$ copies of the complex plane and permutations of the points $z_1,\dots, z_{nq}$, and think the process as a random configuration
of $nq$ unlabelled points in $\mathbb{C}$.   

For $1\leq k \leq nq$, let us define the \emph{$k$-point intensity functions} $\Gamma^k_{q,mQ,n}$  by 
\[
\Gamma^k_{q,mQ,n}(z_1,\dots, z_k):= \frac{(nq)!}{(nq-k)!} \int_{\mathbb{C}^{nq-k}} \Lambda
(z_1,\dots, z_{nq})\diff A(z_{k+1},\dots, z_{nq}).
\]
It is a well-known fact about determinantal point processes that all the intensity functions are easily expressed with the kernel $K_{q,mQ,n}$:
\[
\Gamma^k_{q,mQ,n}(z_1,\dots, z_k) = \det \bigg[ K_{q,mQ,n}(z_i,z_j)\bigg]_{i,j=1}^{k} e^{-m\sum_{j=1}^{k} Q(z_j)}.
\]
The one-point intensity $\Gamma^1_{q,mQ_n}(z)= K_{q,mQ,n}(z,z)e^{-mQ(z)}$ is particularly important, since integrating it over a set $A$ gives the expected 
number of points in $A$. 

The intensity functions are often called correlation functions in the literature, 
and the weighted kernel $K_{q,mQ,n}(z,w)e^{-\frac12mQ(z)-\frac12m Q(w)}$
is referred to as the \emph{correlation kernel} of the process.

\subsection{Weighted potential theory}
To discuss previous results on analytic polynomial kernels, we need to recall some facts from weighted potential theory. 
Let us assume that the weight $Q$ is $C^2$-regular in $\mathbb{C}$ and satisfies the growth condition \eqref{Qgrowthcondition}. 
We define $\mathcal{N}_+$ and $\mathcal{N}_{+,0}$ to be the sets
\[
\mathcal{N}_+:=\big\{w\in\mathbb{C}:\,\Delta Q(w)>0\big\},\quad
\mathcal{N}_{+,0}:=\big\{w\in\mathbb{C}:\,\Delta Q(w)\ge0\big\}.
\]
The \emph{equilibrium weight} $\widehat{Q}$ is defined as 
the largest subharmonic function in $\mathbb{C}$ which is 
$\leq Q$ everywhere and has the growth bound  
\begin{equation} \label{Qhatgrowthcondition}
\widehat{Q}(z)=\log|z|^2+\mathrm{O}(1),
\quad\text{as}\,\,\,|z|\to+\infty.
\end{equation}
The function $\widehat{Q}$ is then subharmonic and 
it follows from general theory for obstacle problems that it is 
$C^{1,1}$ regular (see \cite{HedMak2}). There is also an elementary proof of this fact
due to Berman \cite{berman}. The coincidence set of the obstacle problem 
is 
\[
\mathcal{S}:= \{ Q(z) = \widehat Q(z) \}.
\]
This is a compact set and because $\widehat Q$ is subharmonic, we have $\mathcal S \subset \mathcal{N}_{+,0}$. 
It is known that $\widehat Q$ is harmonic on $\mathbb{C} \backslash \mathcal{S}$. 
The set $\mathcal{S}$ is a central object in weighted potential theory  as it arises as the support of the unique solution 
to the energy minimization problem 
\[%
\min_{\sigma} I(\sigma), 
\]
where 
\begin{equation} \label{continuousenergyfunct}
I(\sigma):= \frac12 \int_{\mathbb{C}^2} \log \frac{1}{|z-w|^2} \diff \sigma(z) \diff \sigma(w) + \int_{\mathbb{C}} Q(w) \diff \sigma (w).
\end{equation}
The infimum is taken over all compactly supported Borel probability measures. Existence and uniqueness in this problem 
is due to Frostman. For more details, see \cite{st}. Note that the functional \eqref{continuousenergyfunct} 
coincides with the standard logarithmic energy in the special case $Q=0$. 
The solution to energy minimization problem can be written as  (see \cite{HedMak2} or the earlier preprint \cite{HedMak1})
\begin{equation} \label{equimeasure}
\diff \widehat \sigma = \Delta \widehat Q \diff A = \Delta Q \mathrm{1}_{\mathcal{S}} \diff A.
\end{equation} 
The measure $\widehat \sigma$ will be referred to as the \emph{equilibrium measure}.

\subsection{Bergman kernels for analytic polynomials} 
Kernels $K_{1,mQ,n}$ and associated probability densities $\Lambda_{1,mQ,n}$
were studied by Ameur, Hedenmalm and Makarov in \cite{ahm1}, \cite{ahm2} and 
\cite{ahm3}.  
There are three possible interpretations for these point processes: in terms of 
Coulomb gas, free fermions or eigenvalues of random normal matrices. 
For more details, we refer the reader to \cite{zabro1}. 

As $m,n\to+\infty$ while $n=m+\mathrm{o}(n)$, 
 Hedenmalm and Makarov (\cite{HedMak1}, \cite{HedMak2}) showed, building 
on the work of Johansson \cite{joh}, that 
\begin{equation} \label{analyticcorrelationconv}
\frac{(n-k)!}{n!} \Gamma^k_{1,mQ,n} \diff A^{\otimes k} \to \diff \widehat \sigma^{\otimes k}
\end{equation}
in the weak-* sense of measures for all fixed $k$.\footnote{This result is just a special case of their theorem, 
which holds for Coulomb gas models in arbitrary temperatures.}
Here, the notation $\mu^{\otimes k}$ stands for the $k$'th 
tensor power of the measure $\mu$.  As the authors showed in \cite{HedMak2},
this result can be used to show that for any bounded continuous function $g$, 
we have the convergence
\begin{equation} \label{linearstatconv}
\frac{1}{n} \sum_{j=1}^n g(\lambda_n) \to \int_{\mathcal{S}} g(w) \diff \widehat \sigma(w), 
\quad n, m \to \infty, n=m+ \mathrm{o}(n).
\end{equation}
in distribution. Recalling \eqref{equimeasure}, the intuitive interpretation of this is that the points from the determinantal process defined via $K_{1,mQ,n}$ 
tend to accumulate on $\mathcal{S}$ with density $\Delta Q$ as $m,n\to+\infty$ while $n=m+\mathrm{o}(n)$. 
The set $\mathcal{S}$ was called
\emph{droplet} for this reason. 

Let us write \emph{bulk} for the interior of the set $\mathcal{S} \cap \mathcal{N}_+$.  The results 
of \cite{ahm2} show that for a bulk point
$z$ and a $n$-tuple $(z_1,\dots, z_n)$ picked from the density 
\[
\Lambda_{1,mQ,n}(z_1,\dots, z_n)\diff A(z_1)\dots \diff A(z_n), 
\]
the local blow-up process at $z$ with coordinates 
\[
\xi_j:=m^{1/2}[\Delta Q(z)]^{1/2}(z_j-z),
\]
converges to the Ginibre$(\infty)$
process, as $m,n\to+\infty$ while $n=m+\mathrm{o}(1)$. If we write 
$\widetilde \Gamma^k_{1,mQ}$ for the $k$-point intensity of the blow-up process, this means that 
\begin{equation} \label{blowupcorrconv}
\widetilde{\Gamma}^k_{1,mQ,n}(\xi_1,\dots, \xi_k) 
\to \det \big[ e^{\xi_i \bar \xi_j} \big]_{i,j=1}^k e^{-\sum_{i=1}^k |\xi_i|^2}
\end{equation}
as $n, m \to \infty$ while $n=m+ \mathrm{o}(1)$, for all $k$ and $(\xi_1,\dots, \xi_k) \in \mathbb{C}^k$. 
Notice that the kernel $e^{\xi\bar\eta}$
appearing in the determinant on the right hand side is the reproducing kernel 
 of the 
Bargmann-Fock space. 

This fact could be interpreted as a universality result in the spirit of random 
matrix theory and related fields. In general, universality means that there   
exists a scaling limit
which does not depend on particularities of the model (see \cite{deift2} for more discussion). 
In our setting, this is reflected by the fact that the 
the limiting process is the same for all weights $Q$. 


One can also formulate a universality result 
more directly in terms of the kernel $K_{1,mQ,n}$. To do this, let us define the \emph{Berezin density} 
centered at $z$ as 
\[
\berd^{\langle z \rangle}_{1,mQ,n}(w):=
\frac{|K_{1,mQ,n}(w,z)|^2}{K_{1,mQ,n}(z,z)}e^{-mQ(w)}.
\]
The main theorem in \cite{ahm1} was as follows: 
\begin{thm}  [Ameur, Hedenmalm, Makarov] \label{ahmthm1}
Fix $z \in \mathrm{int} \mathcal{S} \cap \mathcal{N}_+$ and suppose that $Q$ is real-analytic in some 
neighborhood of $z$. Then 
\[
\frac{1}{m \Delta Q(z)} 
\berd^{\langle z \rangle}_{1,mQ,n}(z+ \frac{\xi}{\sqrt{\Delta Q(z) m}}) \to e^{-|\xi|^2},
\quad n,m \to \infty, m= n+ \mathrm{o}(1),
\]
where the convergence holds in $L^1\big( \mathbb{C} \big)$.
\end{thm}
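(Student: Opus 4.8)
The plan is to deduce the $L^1$-convergence from \emph{pointwise} convergence of probability densities, using Scheff\'e's lemma. Begin with the two normalizations. Applying the reproducing property to the function $w\mapsto K_{1,mQ,n}(w,z)$ gives
\[
\int_{\C}\bigl|K_{1,mQ,n}(w,z)\bigr|^2 e^{-mQ(w)}\,\diff A(w)=K_{1,mQ,n}(z,z),
\]
so $\int_{\C}\berd^{\langle z\rangle}_{1,mQ,n}(w)\,\diff A(w)=1$; after the substitution $w=z+\xi/\sqrt{m\Delta Q(z)}$, whose Jacobian is $\bigl(m\Delta Q(z)\bigr)^{-1}$, the map $\xi\mapsto\frac{1}{m\Delta Q(z)}\berd^{\langle z\rangle}_{1,mQ,n}\bigl(z+\xi/\sqrt{m\Delta Q(z)}\bigr)$ is a probability density on $\C$ for $\diff A$. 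Since also $\int_{\C}e^{-|\xi|^2}\,\diff A(\xi)=1$, Scheff\'e's lemma reduces the theorem to convergence \emph{for each fixed $\xi\in\C$}:
\[
\frac{1}{m\Delta Q(z)}\,\berd^{\langle z\rangle}_{1,mQ,n}\Bigl(z+\tfrac{\xi}{\sqrt{m\Delta Q(z)}}\Bigr)\longrightarrow e^{-|\xi|^2}.
\]

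Writing $w=z+\xi/\sqrt{m\Delta Q(z)}$ and
\[
\berd^{\langle z\rangle}_{1,mQ,n}(w)=\frac{\bigl|K_{1,mQ,n}(w,z)\,e^{-\frac12 mQ(w)-\frac12 mQ(z)}\bigr|^2}{K_{1,mQ,n}(z,z)\,e^{-mQ(z)}},
\]
this pointwise statement follows once we have, uniformly for $z$ in compact subsets of $\mathrm{int}\,\mathcal{S}\cap\mathcal{N}_+$, the on-diagonal estimate $K_{1,mQ,n}(z,z)e^{-mQ(z)}=m\Delta Q(z)(1+\mathrm{o}(1))$ together with the near-diagonal estimate $\bigl|K_{1,mQ,n}(w,z)\bigr|e^{-\frac12 mQ(w)-\frac12 mQ(z)}=m\Delta Q(z)\,e^{-\frac12|\xi|^2}(1+\mathrm{o}(1))$ --- the $q=1$, $\lambda=0$ version of the theorem in the abstract, where $L^1_0\equiv1$. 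Indeed, with these the numerator above equals $\bigl(m\Delta Q(z)\bigr)^2 e^{-|\xi|^2}(1+\mathrm{o}(1))$ and the denominator $\bigl(m\Delta Q(z)\bigr)^2(1+\mathrm{o}(1))$, which gives the claim; uniformity of the $\mathrm{o}(1)$ makes the convergence of the rescaled kernel locally uniform in $\xi$, hence valid at each fixed $\xi$, as Scheff\'e requires.

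The substance is therefore the two kernel asymptotics, obtained by the usual two-sided argument. \emph{Upper bound:} since $Q\ge\widehat Q$ and $\widehat Q$ is $C^{1,1}$ with $\widehat Q(z)=\log|z|^2+\mathrm{O}(1)$, sub-mean-value and Cauchy estimates for elements of $\mathrm{Pol}_{1,n}$ measured against $e^{-m\widehat Q}$ give $K_{1,mQ,n}(z,z)e^{-mQ(z)}\le Cm$ globally, and the same ideas give the off-diagonal damping $\bigl|K_{1,mQ,n}(z,w)\bigr|e^{-\frac12 mQ(z)-\frac12 mQ(w)}\le Cm\,e^{-c\sqrt m\,|z-w|}$ in the bulk. \emph{Lower bound and leading term:} fix $z$, replace $Q$ near $z$ by its osculating weight $Q_z(w)=2\re h_z(w)+\Delta Q(z)|w-z|^2$, where $h_z$ is the holomorphic polynomial of degree $\le 2$ making $Q_z$ agree with $Q$ to second order at $z$; the Fock-type space attached to $e^{-mQ_z}$ has an explicit reproducing kernel whose weighted modulus, in the rescaled variables, is exactly $m\Delta Q(z)\,e^{-\frac12|\xi-\lambda|^2}$. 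Transplant it: multiply the model kernel by a smooth cut-off $\chi$ supported in a disk about $z$ of radius comparable to $m^{-1/2}\log m$ (so that in rescaled coordinates it exhausts any compact $\xi$-set), and correct the resulting almost-holomorphic polynomial by solving $\dbar u=(\dbar\chi)\cdot(\text{model kernel})$ via H\"ormander's weighted $L^2$-estimate for the weight $mQ$ --- admissible since $\Delta(mQ)\ge cm>0$ in the bulk --- with the correction $\mathrm{o}(1)$ relative to the Gaussian main term. Finally the corrected function must be truncated to a genuine element of $\mathrm{Pol}_{1,n}$: here the droplet hypothesis (in the theorem, $m=n+\mathrm{o}(1)$; more generally $n\ge m-M$) guarantees that an $L^2(e^{-mQ})$-function of size $e^{-\frac12 m\widehat Q}$, with $\widehat Q=\log|z|^2+\mathrm{O}(1)$ at infinity, is, after discarding a super-exponentially small tail, a polynomial of degree below $n$, so that the construction legitimately lower-bounds $K_{1,mQ,n}$. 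Real-analyticity of $Q$ near $z$ is what upgrades this to a full asymptotic expansion in the style of Berman--Berndtsson--Sj\"ostrand and Tian--Yau--Zelditch, although only leading order is needed.

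The main obstacle is the last step of the construction: arranging simultaneously that (a) the H\"ormander correction is genuinely lower order than the Gaussian main term and (b) the output lies in $\mathrm{Pol}_{1,n}$ --- i.e.\ balancing the localization radius against both the $\dbar$-estimate and the polynomial truncation, which is precisely where the droplet condition is consumed. The potential-theoretic upper bound and off-diagonal decay are routine in the interior of $\mathcal{S}\cap\mathcal{N}_+$, where $\widehat Q=Q$ and $\Delta Q>0$; the one point requiring care there is to keep every estimate uniform in $z$ over compact subsets of the bulk, so that all the $\mathrm{o}(1)$'s are uniform. This uniformity, together with the two normalizations and Scheff\'e's lemma, yields the $L^1(\C)$ statement, and is in fact strong enough that the off-diagonal decay by itself already rules out any escape of mass to infinity.
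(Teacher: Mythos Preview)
This theorem is quoted from \cite{ahm1}, not proved in the present paper; the relevant comparison is with the paper's machinery for the $q=2$ analogue (Theorem~\ref{blowupthm} and Section~\ref{blowupsection}), which specializes to $q=1$.

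Your Scheff\'e reduction is correct and is indeed how \cite{ahm1} passes from kernel asymptotics to the $L^1$ Berezin statement. Where you diverge is in obtaining the kernel asymptotics. You take the \emph{osculating-weight} route: quadratic Taylor approximation $Q_z$ of $Q$, explicit Fock model kernel, cutoff at shrinking radius $\sim m^{-1/2}\log m$, H\"ormander correction. The paper (following \cite{ahm1} and \cite{bbs}) instead localizes at a \emph{fixed} radius $r$, works with the full holomorphic polarization $Q(z,w)$ and the phase function $\theta$, and manufactures the local kernel $\bigl(mb+\tfrac12\bar\partial_w(\partial_z b/b)+\dots\bigr)e^{mQ(z,w)}$ by iterated integration by parts (Section~\ref{localkernelsection}). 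Your route is conceptually simpler and adequate for the leading term; the paper's route yields a full asymptotic expansion in descending powers of $m$, and this is where real-analyticity of $Q$ is genuinely consumed --- as you note, leading order alone does not need it. On the point you correctly flag as the main obstacle --- forcing the H\"ormander output into $\mathrm{Pol}_{1,n}$ --- the paper's device is to solve $\bar\partial$ not against $e^{-mQ}$ but against $e^{-\widehat\phi_m}$ with $\widehat\phi_m=(m-M_0)\widehat Q+M_1\log(1+|z|^2)$; since $\widehat Q(z)=\log|z|^2+\mathrm{O}(1)$ with leading coefficient exactly $1$ (not $1+\epsilon$), membership in $L^2(e^{-\widehat\phi_m})$ already forces the correct polynomial degree, so no separate truncation step is needed (cf.\ Lemmas~\ref{bianalpol} and~\ref{normminsollemma} for the $q=2$ version). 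Your ``discard a super-exponentially small tail'' gestures at this but does not quite close it: the growth hypothesis on $Q$ alone gives degree $\lesssim m(1+\epsilon)$, which can exceed $n$.
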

It should be mentioned that Berman \cite{berman2} proved a similar result independently, 
also in a higher-dimensional setting. 

Our main result will be a generalization of a slight reformulation of  
theorem \ref{ahmthm1} to the context of more general polyanalytic polynomial kernels $K_{q,mQ,n}$.

\subsection{Polyanalytic Ginibre ensembles}
In a joint paper with Hedenmalm \cite{hh}, we studied kernels $K_{q,mQ,n}$ 
with the weight 
$Q(z)=|z|^2$ and called the associated determinantal point processes 
\emph{Polyanalytic Ginibre ensembles}. As we explained in this paper, 
these point processes describe 
systems of free (i.e. non-interacting) electrons in $\mathbb{C}$ 
in a constant magnetic field of strength $m$ perpendicular to the plane, so that each of the first $q$ Landau 
levels contains $n$ particles. This model has also been studied in physics literature, 
see Dunne \cite{dunne}. 

The analysis was in terms of the Berezin density, which was
defined as in the case $q=1$:
\[
\berd^{\langle z \rangle}_{q,mQ,n}(w):=
\frac{|K_{q,mQ,n}(w,z)|^2}{K_{q,mQ,n}(z,z)}e^{-mQ(w)}.
\] 
In the macroscopic lenght scales, we showed that, 
\begin{gather*}
\berd^{\langle z \rangle}_{q,mQ,n}(w) \diff A(w) \to \delta_z, \qquad |z|<1 \\
\berd^{\langle z \rangle}_{q,mQ,n}(w) \diff A(w) \to \omega_z, \qquad |z| > 1, 
\end{gather*}
as $n, m \to \infty$ while $|n-m| = \mathrm{O}(1)$.  Here $\delta_z$ stands for the Dirac point mass at $z$ and $\omega_z$ for the harmonic measure at $z$ with respect to the domain 
$\mathbb{C} \backslash \overline{\mathbb{D}}$. Notice that both limits are clearly
independent of $q$. 

For microscopic length scales in the bulk $\{ |z| < 1\}$, we obtained 
\begin{equation} \label{polyginibulk}
\berd^{\langle z \rangle}_{q,mQ,n} \big(z + \frac{\xi}{\sqrt{m}}\big) \to \frac{1}{q}L^{1}_{q-1}(|\xi|^2)^2 e^{-|\xi|^2},  \qquad m,n \to \infty, m=n + \mathrm{O}(1)
\end{equation}
where $L^1_{q-1}$ is the associated Laguerre polynomial with parameter $1$ and degree $q-1$. 

\begin{figure} 
\includegraphics[angle= 0,width=80mm]{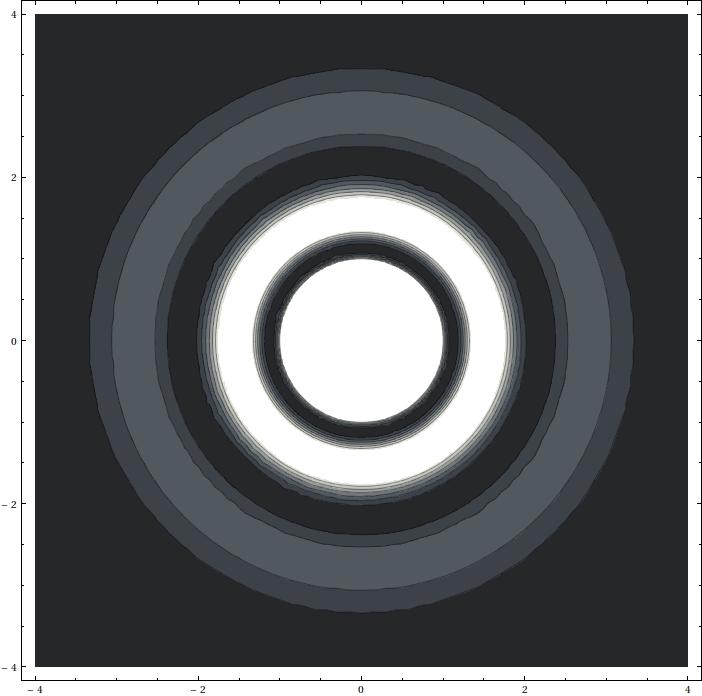} 
\caption{The limiting Berezin density for polyanalytic Ginibre process 
with $q=3$ exhibiting a Fresnel-type ring pattern. Here, white is high and
black is low Berezin density.}
\label{fig-2}
\end{figure} 

It might be interesting to recall that the Laguerre polynomial $L^1_{q-1}$ has $q-1$
zeros on the positive real axis. In terms of the points process, this means that around each point $z$ from the process, 
there are $q-1$ rings around $z$,  the radii of which are of the order of magnitude $m^{-1/2}$, so that there is no repulsion between 
$z$ and points on those rings in the limit as $m, n \to \infty$. So, one would expect the electrons to accumulate on those circles around 
any given electron.  It should be emphasized that this phenomenon is not present 
in the analytic case $q=1$. 

We want to mention here that in \cite{hh}, we also studied  the Berezin kernels near the edge $\{ |z|=1 \}$. We will not discuss such issues in this paper, and will only 
refer the interested reader to the original article. 
  

\subsection{Main results}
Our aim is to generalize the results from polyanalytic Ginibre ensembles to more general weights $Q$. For simplicity, we mostly work 
with $q=2$, but the proof methods should work for any $q$. 
We start by analyzing the one point function $\Lambda^1_{2,mQ,n}(z)$. In section \ref{onepointsection}, 
it is shown in that this expression tends 
to zero for $z \in \mathbb{C} \backslash \mathcal{S}$ in an exponential rate as $m,n \to \infty$. This means 
that the points tend to accumulate on $\mathcal{S}$ as in the analytic case $q=1$. 

Within $\mathcal{S}$, the following theorem presents more detailed information. It states that the bulk scaling limits obtained for polyanalytic Ginibre ensembles are universal. 

\begin{thm} \label{blowupthm}
Set $q=2$. Fix $z_0 \in \mathrm{int} \mathcal{S} \cap \mathcal{N}_+$ and $M > 0$. Assume that $Q$ is $C^2$-smooth, 
satisfies the growth condition \eqref{Qgrowthcondition} and is real-analytic 
in a neighborhood of $z_0$. Then, there exists a number $m_0$ such that for all $m \geq m_0$, we have  
\begin{multline}
\bigg| \frac{1}{m \Delta Q(z_0)} K_{q,mQ,n}\bigg(z_0+\frac{\xi}{\sqrt{m \Delta Q(z_0)}}, 
z_0+\frac{\lambda}{\sqrt{m \Delta Q(z_0)}} \bigg) \bigg| \\
\times 
e^{- \frac12 mQ\big( z_0 + \frac{\xi}{\sqrt{m \Delta Q(z_0)}} \big)
-\frac12 mQ \big(z_0 + \frac{\lambda}{\sqrt{m \Delta Q(z_0)}} \big)} 
= |L^1_{q-1}(|\xi - \lambda|^2)| e^{-\frac12|\xi-\lambda|^2} + \mathrm{O}\big(m^{-1/2}\big),
\end{multline}
as $m \to \infty$ and $n \geq m-M$. The convergence is 
uniform on compact sets of $\mathbb{C}^2$.  
\end{thm}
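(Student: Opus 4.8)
The plan is to reduce Theorem~\ref{blowupthm} to a local statement about the reproducing kernel $K_{q,mQ}$ of the full space $A^{2}_{q,mQ}$ of $q$-analytic functions in $L^{2}(\mathrm{e}^{-mQ})$, and then to run a Berman--Berndtsson--Sj\"ostrand type construction of an approximate kernel, adapted to the polyanalytic ($\bar\partial^{q}$) setting. First I would show that near an interior bulk point one may replace $K_{q,mQ,n}$ by $K_{q,mQ}$. Writing $f=\sum_{j=0}^{q-1}\bar z^{j}f_j$ as in \eqref{polyanaldecompo}, the orthogonal complement of $\mathrm{Pol}_{q,n}$ inside $A^{2}_{q,mQ}$ is generated by functions whose analytic components $f_j$ have vanishing Taylor coefficients up to order $n-1$; combining a sub-mean-value estimate for polyanalytic functions with the growth conditions \eqref{Qgrowthcondition} and \eqref{Qhatgrowthcondition} and with $z_0\in\mathrm{int}\,\mathcal{S}$ forces the reproducing kernel of that complement to be $\mathrm{O}(\mathrm{e}^{-cm})$ uniformly for $z,w$ in a fixed neighbourhood of $z_0$ once $n\ge m-M$, so that $K_{q,mQ}(z,w)-K_{q,mQ,n}(z,w)=\mathrm{O}(\mathrm{e}^{-cm})$ there. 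This is the polyanalytic analogue of the reduction in \cite{ahm1,ahm2}, and it is precisely where the hypothesis $n\ge m-M$ (rather than $n=m+\mathrm{o}(n)$) enters.

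It then remains to expand $K_{q,mQ}$ in the bulk. Translating so that $z_0=0$ and using real-analyticity of $Q$ there, write on a fixed disk $\D(0,\rho)$
\[
mQ(z)=2\re\big(mH(z)\big)+m\,\Delta Q(0)\,|z|^{2}+\mathrm{O}\big(m|z|^{3}\big)
\]
with $H$ holomorphic near $0$. Since $f\mapsto\mathrm{e}^{mH}f$ is a unitary equivalence between the $q$-analytic $L^{2}$-spaces with weights $mQ$ and $mQ-2\re(mH)$, it transforms $K_{q,mQ}$ by factors leaving both polyanalyticity and the invariant quantity $|K|\,\mathrm{e}^{-\frac12 mQ(z)-\frac12 mQ(w)}$ unchanged; so we may assume the weight near $0$ is the model Gaussian weight $m\,\Delta Q(0)|z|^{2}$ up to the cubic error, which after the rescaling $z=\xi/\sqrt{m\Delta Q(0)}$ has size $\mathrm{O}(m^{-1/2})$ on compact $\xi$-sets --- exactly the asserted rate. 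Now posit the local ansatz
\[
\tilde K(z,w)=m\,\mathrm{e}^{m\Phi(z,\bar w)}\sum_{\ell\ge 0}m^{-\ell}\,b_\ell(z,\bar w),
\]
where $\Phi$ is an almost-holomorphic extension of $Q$ (holomorphic in $z$, anti-holomorphic in $w$ to infinite order on the diagonal) with $\Phi(z,\bar z)=Q(z)$, and each $b_\ell(z,\bar w)$ is smooth and of degree $\le q-1$ in each of $\bar z$ and $w$, reflecting $q$-analyticity. Imposing the approximate reproducing identity $\langle p,\tilde K(\cdot,w)\rangle_{L^{2}(\mathrm{e}^{-mQ})}=p(w)+\mathrm{O}(m^{-\infty})$ for $p\in\mathrm{Pol}_{q,n}$, through a Laplace/stationary-phase analysis localized at the diagonal, yields a triangular system of transport equations for the $b_\ell$; the leading symbol is pinned down by the Gaussian model, for which the $q$-analytic Fock kernel is the sum of the first $q$ Landau-level kernels,
\[
\sum_{j=0}^{q-1}L_j\big(\Delta Q(0)|z-w|^{2}\big)\,\mathrm{e}^{\Delta Q(0)z\bar w}=L^{1}_{q-1}\big(\Delta Q(0)|z-w|^{2}\big)\,\mathrm{e}^{\Delta Q(0)z\bar w},
\]
so that after rescaling and removing the holomorphic factor the leading behaviour is $|L^{1}_{q-1}(|\xi-\lambda|^{2})|\,\mathrm{e}^{-\frac12|\xi-\lambda|^{2}}$.

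To turn $\tilde K$ into the true kernel I would multiply $\tilde K(\cdot,w)$ by a fixed smooth cutoff $\chi$ supported in $\D(0,\rho)$ --- harmless because $\mathrm{e}^{m\Phi(z,\bar w)-mQ(z)}$ decays like $\mathrm{e}^{-\frac{m}{2}\Delta Q(0)|z-w|^{2}}$, so $\tilde K(\cdot,w)$ concentrates in an $m^{-1/2}$-neighbourhood of $z_0$ --- and then solve $\bar\partial^{q}u=\bar\partial^{q}(\chi\tilde K)$ via an iterated H\"ormander-type weighted $L^{2}$ estimate (factoring $\bar\partial^{q}=\bar\partial\circ\bar\partial^{q-1}$, or using the vector-valued reformulation of \cite{hh2}), with the weight strictly subharmonic on $\mathrm{supp}\,\chi$ since $\Delta Q>0$ there and made strictly subharmonic elsewhere by a fixed localizing correction. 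The solution $u$ is $\mathrm{O}(m^{-\infty})$ in $L^{2}(\mathrm{e}^{-mQ})$ and, by interior regularity for $\bar\partial^{q}$, pointwise small near $z_0$; then $\chi\tilde K-u$ is genuinely $q$-analytic, lies in $A^{2}_{q,mQ}$, and nearly reproduces, so uniqueness of the reproducing kernel gives $K_{q,mQ}(\cdot,w)=\chi\tilde K(\cdot,w)-u+\mathrm{O}(m^{-\infty})$. Combined with the first paragraph, this yields the expansion of $K_{q,mQ,n}$ and the theorem.

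The main obstacle is the polyanalytic symbol calculus. Unlike the analytic case, the transport equations couple the components $\bar z^{j}$, and one must check both that the recursion stays within the class of symbols of degree $\le q-1$ in $\bar z$ and that the leading symbol is exactly the Laguerre expression rather than merely some polynomial of the right degree --- the cleanest verification being the comparison with the explicit Gaussian model above. A secondary difficulty is keeping every estimate uniform as $m,n\to\infty$ with $n\ge m-M$ rather than $n=m+\mathrm{o}(n)$; this is handled by localizing all constructions to a fixed disk contained in $\mathrm{int}\,\mathcal{S}\cap\mathcal{N}_{+}$, on which $\widehat Q=Q$ and $\Delta Q$ is bounded below, so that the H\"ormander weights and the comparison estimate of the first paragraph hold with constants independent of $n$. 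For $q=2$ the iterated $\bar\partial^{2}$-solve is a single extra step compared with the Bergman case, and the argument for general $q$ indicated in the paper would proceed along the same lines.
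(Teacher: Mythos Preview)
Your strategy differs from the paper's in two substantive ways, and the first contains a gap.

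You propose to reduce $K_{q,mQ,n}$ to the full kernel $K_{q,mQ}$ by estimating the reproducing kernel of $(\mathrm{Pol}_{q,n})^{\perp}\subset A^{2}_{q,mQ}$. But your description of that complement is incorrect: for a general weight $Q$, orthogonality in $L^{2}(e^{-mQ})$ is not governed by vanishing of low-order Taylor coefficients of the analytic components $f_j$, so the argument ``sub-mean-value plus growth of $\widehat Q$'' does not apply as stated. A rigorous version of this step would in fact need essentially the same machinery the paper develops --- the weighted maximum principle for bianalytic polynomials (Proposition~\ref{bimaxprinciple}) and the $\bar\partial^{2}$-estimate with polynomial growth control (Theorem~\ref{rhoprop}) --- so the detour buys nothing. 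The paper sidesteps it entirely: it never introduces $K_{q,mQ}$ as an intermediate object but compares $K_{q,mQ,n}$ \emph{directly} with the explicitly constructed local kernel $K^{\langle 2\rangle}_{2,mQ}$ via the splitting~\eqref{splitting}, where the polynomial constraint enters through the projection $\mathrm{P}_{2,mQ,n}$ and is handled by the $L^{2}_{2,mQ,n}$-minimal solution provided by Theorem~\ref{rhoprop} with $\rho_m=0$.

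For the local kernel itself the paper also takes a different, more elementary route than the stationary-phase/transport-equation ansatz you describe. Starting from the exact approximate reproducing identity~\eqref{Rqmapprorepro} for the explicit kernel $R_{2,m}$, Section~\ref{localkernelsection} simply Taylor-expands in powers of $(z-w)$ and integrates by parts (Proposition~\ref{partintprop}) to force bianalyticity in $\bar w$; the leading coefficients $\Lfun_{2,0},\Lfun_{2,1}$ of~\eqref{b22}--\eqref{b21} come out by direct computation, and the Laguerre polynomial $L^{1}_{1}(x)=2-x$ is read off at the end rather than imported from a Gaussian model. Your BBS-style approach could be made to work once the reduction step is repaired, but it is precisely the microlocal-analysis machinery the paper advertises having replaced by this elementary calculation.
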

One can check that with the weight $Q(z)=|z|^2$, the theorem is a slight reformulation 
of \eqref{polyginibulk}. 

Basic structure of our argument will be the same as in \cite{ahm1}. There, the authors relied on two main techniques: 
algorithm of Berman-Berndtsson-Sj\"ostrand \cite{bbs}
to compute asymptotic expansions for Bergman kernels, 
and H\"ormander's $\bar \partial$-estimates. First, we 
will simplify the method of Berman-Berndtsson-Sj\"ostrand (in the one 
complex variable context only) and then extend it to polyanalytic functions. In a joint
paper with Hedenmalm \cite{hh2}, we already showed how to obtain asymptotic expansions 
in the polyanalytic setting, but 
the approach we will take here will be more elementary and also computationally simpler.
  
Whereas in \cite{ahm1} certain estimates for the $\bar \partial$-operator are 
used, we need similar results for the operators $\bar \partial^q$ with $q > 1$. As 
a consequence, we also obtain an off-diagonal decay estimate for bianalytic Bergman kernels,  which, informally speaking, says that correlations are short range in $\mathcal{S}$. Comparing with the 
similar result in \cite{ahm1}, one sees that the decay is essentially as strong as in the case $q=1$. Again, we present a proof for the case $q=2$, but the method should generalize to 
any $q \geq 2$.   
\begin{thm} \label{offdiagonalthm}
Suppose that $Q$ is $C^2$-smooth. Fix a compact set $\mathrm{K}$ in the interior of $\mathcal{S} \cap \mathcal{N}_+$  and constant $M > 0$. Set 
\[
r_{0, \mathrm{K}} := \frac14 \mathrm{dist}\big(\mathrm{K}, \mathbb{C} \backslash (\mathcal{S} \cap \mathcal{N}_+)\big).
\]
Then, there exist positive constants $C,\epsilon$ and $m_0$ such that 
for any $z_0 \in \mathrm{K}$ and $z_1 \in \mathcal{S}$, 
it holds
\[
|K_{2,mQ,n}(z_0,z_1)|^2e^{-mQ(z_0)-mQ(z_1)} 
\leq 
Cm^2 e^{-\epsilon \sqrt{m} \min\{ r_{0,\mathrm{K}}, |z_0-z_1| \}}
\]
where we assume $m \geq \max \{m_0, M-1 \}$ and  $n \geq m-M+1$.  
The constants $C, \epsilon$ and $m_0$ only depend on $Q, \mathrm{K}$ and $M$. 
\end{thm}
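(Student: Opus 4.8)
The plan is to adapt the scheme of \cite{ahm1,ahm2} from the analytic case: a one‑point upper bound, local sub‑mean‑value estimates, and a $\dbar$‑correction argument, with the single operator $\dbar^{2}$ handled through the decomposition \eqref{polyanaldecompo}. First I would record, for $f$ with $\dbar^{2}f=0$ on $\mathbb{D}(w,\delta)$ and $Q$ of class $C^{2}$ there, the local estimates
\[
|f(w)|^{2}e^{-mQ(w)}\le Cm\int_{\mathbb{D}(w,c/\sqrt m)}|f|^{2}e^{-mQ}\,\diff A,\qquad
\int_{\mathbb{D}(w,c/(2\sqrt m))}|\dbar f|^{2}e^{-mQ}\,\diff A\le Cm\int_{\mathbb{D}(w,c/\sqrt m)}|f|^{2}e^{-mQ}\,\diff A,
\]
obtained by subtracting from $mQ$ the holomorphic part of its second‑order Taylor polynomial at $w$ (so the weight oscillates only by $O(1)$ on the $m^{-1/2}$‑disc) and using ordinary Cauchy and mean‑value bounds for the analytic components of $f$. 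Feeding $f=\sum_{j}\bar z^{j}f_{j}$ into the first estimate, using $|z|=O(1)$ on $\mathcal{S}$, $\widehat{Q}=Q$ on $\mathcal{S}$, and comparability of the $e^{-mQ}$‑ and $e^{-m\widehat{Q}}$‑norms on polynomials of degree $<n$ (which uses \eqref{Qhatgrowthcondition} and $n\ge m-M+1$), yields the one‑point bound $K_{2,mQ,n}(z,z)e^{-mQ(z)}\le Cm$ for $z\in\mathcal{S}$. Applying the first local estimate to $K_{2,mQ,n}(\cdot,z_1)$ at $z_0$ and the one‑point bound at $z_1$ then reduces the theorem to proving
\[
\tilde\delta:=\int_{\mathbb{D}(z_0,\rho)}|K_{2,mQ,n}(w,z_1)|^{2}e^{-mQ(w)}\,\diff A(w)\;\le\;C\,m\,e^{mQ(z_1)}\,e^{-\epsilon\sqrt m\,d},\qquad d:=\min\{r_{0,\mathrm{K}},|z_0-z_1|\},
\]
for a fixed $\rho\asymp d$; the range $\sqrt m\,d=O(1)$ is disposed of at once by Cauchy--Schwarz and the one‑point bound.

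For the main estimate, write $\kappa:=K_{2,mQ,n}(\cdot,z_1)=\kappa_{0}+\bar w\,\kappa_{1}$ as in \eqref{polyanaldecompo}, with $\kappa_{0},\kappa_{1}$ analytic polynomials of degree $<n$ and $\kappa_{1}=\dbar\kappa$. Fix $\chi$ with $\chi\equiv1$ on $\mathbb{D}(z_0,\rho)$, $\mathrm{supp}\,\chi\subset\mathbb{D}(z_0,2\rho)$, $|\dbar^{j}\chi|\lesssim\rho^{-j}$; since $2\rho\le\tfrac12 r_{0,\mathrm{K}}$, $\mathrm{supp}\,\chi$ lies in a compact subset of $\mathrm{int}(\mathcal{S}\cap\mathcal{N}_+)$ on which $\Delta Q\ge c_{0}>0$ and $\widehat{Q}=Q$, and $z_1\notin\mathrm{supp}\,\chi$. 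I would take the Agmon weight $\psi(z)=\epsilon\sqrt m\,\min\{|z-z_1|,\omega_0\}$ (suitably smoothed), with $\omega_0\asymp d$ arranged so that $\psi$ is constant and equal to $\psi_{0}\asymp\epsilon\sqrt m\,d$ on a neighbourhood of $\mathrm{supp}\,\chi$ while $\psi=O(1)$ on $\mathbb{D}(z_1,c/\sqrt m)$; the point of this form is that $\psi$ is subharmonic off the "cap'' $\{|z-z_1|=\omega_0\}$, which can be placed where $\Delta Q\ge c_0$, so that $\varphi_{m}:=m\widehat{Q}+2\psi+\eta|z|^{2}$ (with $\eta>0$ a small fixed regulariser) is strictly plurisubharmonic and $\Delta\varphi_{m}\gtrsim m$ on $\mathrm{supp}\,\chi$. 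Solving the two equations $\dbar u_{j}=-(\dbar\chi)\kappa_{j}$ ($j=0,1$) by H\"ormander's estimate against $e^{-\varphi_{m}}$ — each datum being supported where $\Delta\varphi_m\gtrsim m$ — the function $u:=u_{0}+\bar w\,u_{1}$ makes $F:=(1-\chi)\kappa-u$ $2$‑analytic. Combining $|\dbar\chi|\lesssim\rho^{-1}$, the $L^{2}$‑derivative estimate for $\kappa_{1}=\dbar\kappa$, the identity $\int|\kappa|^{2}e^{-mQ}\,\diff A=K_{2,mQ,n}(z_1,z_1)\le Cm\,e^{mQ(z_1)}$, and the factor $e^{-2\psi_{0}}$ extracted from the weight on $\mathrm{supp}\,\chi$ should give $\|u\|^{2}_{L^{2}(e^{-\varphi_m})}\lesssim \rho^{-2}e^{-2\psi_{0}}e^{mQ(z_1)}$ (the two $\dbar$‑solves being responsible for cancelling the $m$ from the derivative estimate against the $m$ in the one‑point bound). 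Localising to $\mathbb{D}(z_1,c/\sqrt m)$, where $\varphi_m=mQ+O(1)$, and applying the pointwise local estimate to the $2$‑analytic $u$ there yields $|u(z_1)|^{2}e^{-mQ(z_1)}\lesssim m\rho^{-2}e^{-2\psi_{0}}e^{mQ(z_1)}$.

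It remains to convert this into the bound for $\tilde\delta$ through the reproducing property. Choosing $u$ so that $u\perp A^{2}_{2,mQ,n}$, and using the growth \eqref{Qhatgrowthcondition} together with $n\ge m-M+1$ to identify $F$ as an element of $\mathrm{Pol}_{2,n}$, hence of $A^{2}_{2,mQ,n}$, the reproducing identity applied at $z_1$ gives, since $\chi(z_1)=0$, $\kappa(z_1)=K_{2,mQ,n}(z_1,z_1)$ and $\langle u,\kappa\rangle=0$,
\[
\int\chi\,|\kappa|^{2}e^{-mQ}\,\diff A = u(z_1),
\]
and as $\mathbf 1_{\mathbb{D}(z_0,\rho)}\le\chi$ this bounds $\tilde\delta\le|u(z_1)|$. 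Combined with the previous display, $\tilde\delta^{2}\le|u(z_1)|^{2}\lesssim m\rho^{-2}e^{-2\psi_{0}}e^{2mQ(z_1)}$; since $\rho\asymp d\gtrsim m^{-1/2}$ in the nontrivial regime and $\psi_{0}\asymp\epsilon\sqrt m\,d$, this is the required estimate for $\tilde\delta$ (after relabelling $\epsilon$), and undoing the reductions yields $|K_{2,mQ,n}(z_0,z_1)|^{2}e^{-mQ(z_0)-mQ(z_1)}\lesssim m^{2}e^{-\epsilon\sqrt m\,d}$.

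The step I expect to be the main obstacle is arranging the correction $u$ to be \emph{simultaneously} orthogonal to $A^{2}_{2,mQ,n}$ in the $e^{-mQ}$ inner product and to obey the Agmon‑weighted estimate, and to check that $F$ genuinely lies in $\mathrm{Pol}_{2,n}$ rather than merely in the space of $2$‑analytic $L^{2}(e^{-mQ})$ functions, whose polynomial degrees are controlled only by $m(1+\epsilon)$ via \eqref{Qgrowthcondition}: the minimal‑norm solution achieving the orthogonality is minimal for a weight ($e^{-mQ}$) different from the one ($e^{-\varphi_m}$) carrying the decay, and in the polyanalytic setting the splitting $u=u_0+\bar w u_1$ with each $u_j\perp$ holomorphic does \emph{not} suffice to make $u\perp A^{2}_{2,mQ,n}$. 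The cleanest resolution is probably to run the whole construction in the vector‑valued reformulation of $\dbar^{2}$ from \cite{hh2}, in which the projection onto polyanalytic polynomials of the correct bidegree is built into the solution operator; one must also verify there that the double H\"ormander gain $m^{-2}$ survives (a naive iteration, the intermediate $\dbar$‑solve not being supported in the region of positive curvature, leaves only $m^{-1}$ and would cost a spurious factor $m^{1/2}$). A secondary obstacle is the Agmon weight itself: the naive "distance‑to‑$z_0$'' choice is superharmonic and would destroy the plurisubharmonicity of $\varphi_{m}$ at points of $\mathcal{S}$ where $\Delta Q$ degenerates, which is why the "capped distance‑to‑$z_1$'' form, with its cap placed in $\{\Delta Q\ge c_{0}\}$, is used.
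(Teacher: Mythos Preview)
Your outline has the right architecture---pointwise/sub-mean-value bounds, a cutoff, an Agmon-weighted $\bar\partial$-type correction, and a reproducing identity---but the two obstacles you flag are genuine, and the paper resolves them by a route that differs from yours in a way worth noting.

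First, your Agmon weight $\psi(z)=\epsilon\sqrt m\,\min\{|z-z_1|,\omega_0\}$ has a geometric problem you do not mention: the smoothed cap contributes a negative Laplacian of order $m$ on the whole circle $\{|z-z_1|=\omega_0\}$, and since $z_1$ is merely in $\mathcal{S}$ (possibly on $\partial\mathcal{S}$, or far from $z_0$), that circle cannot in general be placed inside $\{\Delta Q\ge c_0\}$; away from the bulk $\Delta\widehat Q=0$, so $\varphi_m=m\widehat Q+2\psi+\eta|z|^2$ fails to be strictly plurisubharmonic there and the H\"ormander step breaks. The paper avoids this by swapping the roles of $z_0$ and $z_1$: the cutoff equals $0$ near $z_0$ and $1$ far away (so the data sits in an annulus around $z_0$), the reproducing identity is applied at $z_0$, and the Agmon weight $\rho_m$ is radial from $z_0$, supported entirely in $\mathbb D(z_0,r_1)\subset\mathrm{int}(\mathcal S\cap\mathcal N_+)$. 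This keeps all curvature corrections inside the bulk.

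Second, the orthogonality/degree issue you correctly isolate is the technical heart of the paper, and your suggested fix (separate $\bar\partial$-solves for the analytic components, or the vector-valued reformulation) does not by itself produce a solution orthogonal to $A^2_{2,mQ,n}$ \emph{and} controlled in the Agmon-weighted norm. The paper's device is to work directly with the $\bar\partial^2$-equation and prove (its Theorem~\ref{rhoprop}) that the $L^2_{2,mQ,n}$-minimal solution already satisfies the $e^{\rho_m}$-weighted bound. The mechanism is: (i) introduce $\widehat\phi_m=(m-M_0)\widehat Q+M_1\log(1+|z|^2)$, strictly subharmonic everywhere, with the growth tuned so that $A^2_{2,\widehat\phi_m+\rho_m}\subset A^2_{2,\phi_m,n}$ (this is exactly what controls the polynomial degree and replaces your appeal to \eqref{Qgrowthcondition}); (ii) observe that $u_ne^{\rho_m}$ is the $L^2_{2,\phi_m+\rho_m,n}$-minimal solution of a \emph{twisted} $\bar\partial^2$-equation with data $f e^{\rho_m}$ plus lower-order terms involving $\bar\partial\rho_m$, $\bar\partial^2\rho_m$; (iii) compare with the $L^2(e^{-(\widehat\phi_m+\rho_m)})$-minimal solution, which lies in $L^2_{2,\phi_m,n}$ by (i); (iv) estimate the latter by iterating H\"ormander twice, inserting a carefully built auxiliary weight $\theta_m(|z-z_0|)$ to make the second application legitimate. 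The conditions \eqref{rhocond1}--\eqref{rhocond3} on $\rho_m$ are exactly what is needed to absorb the lower-order terms from the twist into the left-hand side. This is where the honest $m^{-2}$ gain comes from; your componentwise $\bar\partial$-scheme only gives $m^{-1}$, as you note.

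In short: your reductions and the reproducing identity $\int\chi|\kappa|^2e^{-mQ}=u(z_1)$ are fine, but to close the argument you should (a) center both the cutoff-complement and the Agmon weight at $z_0$ rather than $z_1$, and (b) replace the two $\bar\partial$-solves by a single weighted $\bar\partial^2$-estimate for the $L^2_{2,mQ,n}$-minimal solution, using the $\widehat\phi_m$ comparison and the twisted-equation trick.
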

One can of course ask what happens if the point $z_1$ is allowed to be outside $\mathcal{S}$. 
The answer will be provided in section \ref{onepointsection}, where we show that even stronger 
decay holds as $m \to \infty$. 

It is possible that this off-diagonal decay estimate, or a variant of it, could also be used in other contexts to extend 
known results about analytic functions to polyanalytic functions.
We should mention at least the work of Ortega-Cerd\`{a} and Ameur \cite{ameurortega} 
concerning Fekete points 
as well as that of Ortega-Cerd\`{a} and Seip \cite{ortegaseip} on description of sampling and interpolation sets.  The latter 
topic in spaces of polyanalytic functions is related to time frequency analysis (see Abreu \cite{abreu}). 

It would also be natural to study asymptotics of $K_{q,mQ,n}$ near the edge of the droplet but 
this question remains open even in the case $q=1$. 

\subsection{Further questions: letting $q$ tend to infinity}
It is also possible to let all the parameters $q,m$ and $n$ tend to infinity in our model.
We explained already in \cite{hh} that the rescaled Berezin transform 
\[
\frac{1}{mq} \berd^{\langle z_0 \rangle}_{q,m|z|^2, n}\big( z_0 + \frac{\xi}{\sqrt{mq}}\big) \qquad |z_0| < 1
\]
converges to the limit $J^1(2 |\xi|)/(2 |\xi|)$ if we first let $n,m \to \infty$ while $n=m + \mathrm{O}(1)$ and then let $q \to \infty$ afterwards. 
Here,  $J^1$ is the standard Bessel function.  Because of theorem \ref{blowupthm}, a
similar result also holds when the weight is more general. Interestingly, this Bessel kernel could be viewed 
as a two-dimensional analogue of the sine kernel from Hermitian random matrix theory: the latter is the Fourier transform 
of a charateristic function of an interval while the former is the Fourier transform of a characteristic
 function of a disk.  

It would interesting to study the asymptotics for $K_{q,mQ,n}$ as $q$ and $n$ go tend to infinity simultaneously. 
To get a very symmetric model, one could set $n=m$, require that $n+q \leq N$ and then let $N$ tend to infinity. It seems 
likely that the above Bessel kernel would also arise here in the limit. 

\subsection{Further questions: fluctuations} 
In \cite{ahm2} and \cite{ahm3}, fluctuation field of the random normal matrix model was shown to converge to Gaussian free field, in the first paper 
with the restriction 
that the test function is supported in the interior of $\mathcal{S}$. The methods of the first paper
should apply to the polyanalytic setting, given the technology we develop in the present paper. The argument of 
\cite{ahm3}, where 
the case of more general test functions was treated using so called Ward identities, seems harder to generalize. 

\section{Construction of local polyanalytic Bergman kernels} \label{localkernelsection}
In this section, we present an algorithm to compute asymptotic expansions for 
polyanalytic Bergman kernels near the diagonal. For analytic functions, 
this is a well-studied topic in several complex variables literature 
see e.g. \cite{cat}, \cite{zel}, \cite{tian}, \cite{mm}. The algorithm we present here is based on the work 
of Berman, Berndtsson and Sj\"ostrand \cite{bbs}, whose method relies on a certain technique from microlocal analysis
(for an detailed exposition 
in the one complex variables setting, see \cite{ahm1}). Here we will show that at least in the one-dimensional case, 
this technique can be dispensed with. In particular, 
we get an alternative way to obtain results of Ameur, Hedenmalm and Makarov 
in the case $q=1$. We then show how to extend this modified algorithm to polyanalytic functions. This provides 
a simplification of the method of \cite{hh2}, 
which was based on the original microlocal analysis technique. 
 
We take an arbitrary $z_0 \in \mathcal{N}_+$ 
and assume that $Q$ is real-analytic in a neighborhood of $z_0$. 
We will also pick $r>0$ such that the following 
conditions are satisfied:
\begin{enumerate}
\item $Q$ is real-analytic in $\mathbb{D}(z_0,r)$ and $\Delta Q(z) > \epsilon > 0 $ on $\mathbb{D}(z_0,r)$. 
\item 
There exists a local polarization 
of $Q$ in $\mathbb{D}(z_0,r)$, i.e. a function $Q: \mathbb{D}(z_0,r) \times  \mathbb{D}(z_0,r) \to \mathbb{C}$
which is analytic in the first and anti-analytic in the second variable, and satisfies 
$Q(z,z) = Q(z)$. 
\item
For $z, w \in \mathbb{D}(z_0, r)$,  we have $\partial_z \bar \partial_w Q(z,w) \neq 0$ and $\bar \partial_w \theta(z,w) \neq 0$; 
these conditions are made possible by condition $(1)$. Here $\theta$ is the phase function which is defined below. 
\item
Taylor expansion of $Q(z, w)$ gives 
\begin{equation} \label{Qtaylor1}
2 \mathrm{Re} Q(z,w) - Q(w)- Q(z) = -\Delta Q(z) |w-z|^2 + \mathrm{O}(|z-w|^3). 
\end{equation}
For details, see p. 1555 in \cite{ahm1}. We require that for $z,w \in \mathbb{D}(z_0,r)$, 
\begin{equation} \label{Qtaylor2}
 2\mathrm{Re} Q(z,w) - Q(w)- Q(z) \leq -\frac12 \Delta Q(z_0) |w-z|^2. 
\end{equation}
\end{enumerate} 
We define the \emph{phase function} $\theta: \mathbb{D}(z_0,r) \times \mathbb{D}(z_0,r)$ as 
\[ \theta(z,w) = \frac{ Q(w) - Q(z,w)}{w-z}. \]
Notice that $\theta$ is analytic in the first variable and real-analytic in the second. 
It can be analytically continued to the diagonal, and we have $\theta(z, z) = \partial_z Q(z)$. More generally, 
\begin{equation}
\theta(z,w) = \frac{Q(w, w)- Q(z,w)}{w-z} = \sum_{j=0}^{\infty} \frac{1}{(j+1)!} (w-z)^j \partial_z^{j+1} Q(z,w).
\label{thetataylor}
\end{equation}
This leads to the following Taylor expansion, which we will need repeatedly: 
\begin{equation}
\bar \partial_w \theta(z,w) = b(z,w) + \frac12 (w-z) \partial_z b(z,w) + 
\frac16(w-z)^2 \partial_z^2 b(z,w) + \dots
\label{dbarthetataylor}
\end{equation}
where 
\[
b(z,w):= \bar \partial_w \partial_z Q(z,w).
\] 

We fix $\chi_0:\mathbb{R} \to \mathbb{R}$ to be a smooth and non-negative cut-off 
function which equals $1$ on the interval $(-\frac23, \frac23)$ and is 
supported on $(-1,1)$. We then define $\chi(z) = \chi_0(\frac{|z-z_0|}{r})$; this function will then be supported on 
$\mathbb{D}(z_0, r)$ and equal $1$ on $\mathbb{D}(z_0, \frac23 r)$.

We will write $A^2_{q,mQ}$ for the subspace of $q$-analytic functions in $L^2(e^{-mQ})$:
\begin{equation}
A^2_{q,mQ}: = \{ f: \mathbb{C} \to \mathbb{C} \mid \bar \partial^q f = 0, 
\| f \|^2_{m} := \int_{\mathbb{C}} |f|^2 e^{-mQ} \diff A < \infty \}.
\end{equation}
Clearly, the spaces $A^2_{q,mQ,n}$ are closed subspaces of $A^2_{q,mQ}$. We will 
use the notation $\| \cdot \|_m$ for the norm in the spaces $L^2(e^{-mQ(z)})$, 
$A^2_{q,mQ}$ and $A^2_{q,mQ,n}$. 

We start by proving a lemma that will be frequently used in the sequel. 
\begin{lem} \label{neglilemma}
Let $m \geq 1$ and $A_m: \mathbb{D}(z_0,r) \times \mathbb{D}(z_0,r) \backslash \{ (z,w) \in \mathbb{C}^2 : z=w \} 
\to \mathbb{C}$ 
be $q$-analytic 
in the first variable and real-analytic in the second. We also assume that there exists 
$K > 0 $ and a positive integer $N$ such that 
\[
|\bar \partial_w^k A_m(z,w)| \leq m^N K, \qquad  
0 \leq k \leq q-1, 
\]
for all $z \in \mathbb{D}(z_0, \frac13 r)$ and   
$w \in \mathbb{D}(z_0, r) \backslash \mathbb{D}(z_0, \frac23 r)$.  
Then, given  integers $k$ and $l$ satisfying $0 \leq k \leq q-1$ and $l \geq 1$, 
there exists $\delta >0$ such that 
\begin{equation}
\int_{\mathbb{D}(z_0,r) \backslash \mathbb{D}(z_0, \frac23 r)} 
\bar \partial_w^k u(w) \bar \partial_w^l \chi(w) A_m(z,w) e^{m(z-w)\theta(z,w)} \diff A(w) 
= \mathrm{O}(\|u\|_m e^{\frac12m Q(z)} e^{-\delta m})
\end{equation}
for any $u \in A^2_{q,mQ}$ and $z \in \mathbb{D}(z_0, \frac13 r)$. The number $\delta$ and the constant
of the error term are independent of $u, m$ and $z$. 
\end{lem}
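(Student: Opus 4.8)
The plan is to show that the integrand is exponentially small pointwise on the annular region $\mathbb{D}(z_0,r)\setminus\mathbb{D}(z_0,\tfrac23 r)$ after the weight $e^{-\frac12 mQ(z)}$ is extracted, and then integrate. The main point is that for $z\in\mathbb{D}(z_0,\tfrac13 r)$ and $w$ in the annulus, the two points $z$ and $w$ are separated by a fixed distance $\gtrsim \tfrac13 r$, so the exponent $m(z-w)\theta(z,w)$ carries a genuine decay once we account for the weights. The first step is to record the identity, valid for the phase function, that
\[
2\,\mathrm{Re}\,\big[(z-w)\theta(z,w)\big] = 2\,\mathrm{Re}\,Q(z,w) - Q(z) - Q(w) + \big(Q(z)+Q(w)\big) - 2\,\mathrm{Re}\,Q(z,w) + 2\,\mathrm{Re}\,\big[(z-w)\theta(z,w)\big];
\]
more usefully, from the definition $\theta(z,w)=(Q(w)-Q(z,w))/(w-z)$ one gets $(z-w)\theta(z,w) = Q(z,w) - Q(w)$, hence
\[
\mathrm{Re}\big[m(z-w)\theta(z,w)\big] - \tfrac12 mQ(z) = \tfrac12 m\big(2\,\mathrm{Re}\,Q(z,w) - Q(z) - Q(w)\big) \le -\tfrac14 m\,\Delta Q(z_0)\,|z-w|^2
\]
by the Taylor estimate \eqref{Qtaylor2}. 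Since $|z-w|\ge \tfrac13 r$ on the relevant set, the right-hand side is $\le -\delta_0 m$ for $\delta_0 = \tfrac{1}{36} r^2 \Delta Q(z_0)$.

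Next I would deal with the $q$-analytic derivatives. Because $A_m(z,\cdot)$ is merely real-analytic in $w$, I cannot integrate by parts freely, but the hypothesis bounds $\bar\partial_w^k A_m$ polynomially in $m$ for $0\le k\le q-1$, and $u$ is $q$-analytic so $\bar\partial_w^k u$ makes sense for the same range. Expanding $\bar\partial_w^k\big(u(w)\,\bar\partial_w^l\chi(w)\,A_m(z,w)\,e^{m(z-w)\theta(z,w)}\big)$ is not needed; rather, since the integrand in the statement already has $\bar\partial_w^k$ applied only to $u$, I simply estimate
\[
\Big|\bar\partial_w^k u(w)\,\bar\partial_w^l\chi(w)\,A_m(z,w)\,e^{m(z-w)\theta(z,w)}\Big| \le C m^N \big|\bar\partial_w^k u(w)\big|\, e^{m\,\mathrm{Re}[(z-w)\theta(z,w)]}
\]
on the annulus, using boundedness of $\bar\partial_w^l\chi$ and of $A_m$ there. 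The remaining issue is to control $\bar\partial_w^k u(w)$ for $u\in A^2_{q,mQ}$ in terms of $\|u\|_m$. Here I would use the polyanalytic decomposition \eqref{polyanaldecompo}, $u=\sum_{j=0}^{q-1}\bar z^j u_j$ with $u_j$ analytic, so that $\bar\partial^k u$ is again of this form with analytic coefficients that are linear combinations of the $u_j$; then a standard Bergman-type pointwise estimate gives $|\bar\partial_w^k u(w)|\le C m^{M'} e^{\frac12 mQ(w)}\|u\|_m$ for $w$ in the slightly shrunk disk, with $M'$ depending only on $q$ (this uses that $Q$ is smooth and that one can compare $e^{-mQ}$ on a disk of radius $\sim m^{-1/2}$ to a constant). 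A clean way is to invoke the reproducing kernel of $A^2_{q,mQ}$ locally, or simply the sub-mean-value property applied componentwise.

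Combining, on the annulus the absolute value of the integrand is bounded by
\[
C m^{N+M'} \|u\|_m\, e^{\frac12 mQ(w)}\, e^{m\,\mathrm{Re}[(z-w)\theta(z,w)]} = C m^{N+M'}\|u\|_m\, e^{\frac12 mQ(z)}\, e^{m(\mathrm{Re}[(z-w)\theta(z,w)] + \frac12 Q(w) - \frac12 Q(z))},
\]
and the exponent in the last factor is exactly $\tfrac12 m(2\mathrm{Re}\,Q(z,w)-Q(z)-Q(w)) \le -\tfrac14 m\Delta Q(z_0)|z-w|^2 \le -\delta_0 m$. Integrating over the annulus (of bounded area) absorbs the polynomial factor $m^{N+M'}$ into the exponential by choosing any $\delta<\delta_0$ and $m$ large, which yields the claimed bound $\mathrm{O}(\|u\|_m e^{\frac12 mQ(z)}e^{-\delta m})$ uniformly in $z\in\mathbb{D}(z_0,\tfrac13 r)$ and $u$. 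The main obstacle I anticipate is the pointwise control of $\bar\partial_w^k u$ by $\|u\|_m$ with the correct weight and only polynomial loss in $m$; everything else is the phase estimate \eqref{Qtaylor2}, which is already granted, plus bookkeeping.
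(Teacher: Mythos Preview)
Your argument is correct in outline, but it takes a different and heavier route than the paper. The paper avoids pointwise control of $\bar\partial_w^k u$ altogether: for $k=0$ it simply applies Cauchy--Schwarz to pull out $\|u\|_m$ and is left with the integral $\int_{\text{annulus}}|\bar\partial_w^l\chi|^2 e^{-\frac12 m\Delta Q(z_0)|z-w|^2}\,\diff A$, which is handled by the separation $|z-w|\ge \tfrac13 r$ exactly as you do; for $k\ge 1$ it integrates by parts $k$ times to move all $\bar\partial_w$'s off $u$ and onto $\bar\partial_w^l\chi\cdot A_m\cdot e^{m(z-w)\theta}$, using $\bar\partial_w^q u=0$, and then reduces to a sum of $k=0$-type integrals (the differentiation of the exponential only produces polynomial-in-$m$ factors, which the exponential decay swallows). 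Your approach instead invokes a weighted Bergman inequality $|\bar\partial_w^k u(w)|\lesssim m^{M'} e^{\frac12 mQ(w)}\|u\|_m$; this is true, but it is precisely the content of Lemma~\ref{pointwiselemma}/\ref{pointwiselemma2} (stated for $q=2$, and only sketched for general $q$ in Section~\ref{generalqsection}), so you are forward-referencing nontrivial results. The integration-by-parts trick is lighter, self-contained at this point in the paper, and works uniformly in $q$ without needing to extract the analytic components $u_j$ from the polyanalytic decomposition.
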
 
\begin{proof}
Let us start with $k=0$. By \eqref{Qtaylor2}, we have 

\begin{multline}
\bigg| \int_{\mathbb{D}(z_0,r)\backslash \mathbb{D}(z_0, \frac23 r)} 
u(w) \bar \partial_w^l \chi(w) A(z,w) e^{m(z-w)\theta(z,w)} \diff A(w)\bigg| \\
\leq m^N K e^{\frac12 m Q(z)} \int_{\mathbb{D}(z_0, r) \backslash \mathbb{D}(z_0, \frac23r)} \big|u(w)
\bar \partial^l_w \chi(w)\big| 
 e^{-\frac12 mQ(w)- \frac14 m \Delta Q(z_0) |w-z|^2} \diff A(w) \\
\leq m^N Ke^{\frac12 m Q(z)} \|u\|_m \bigg[ \int_{\mathbb{D}(z_0, r) \backslash \mathbb{D}(z_0, \frac23r)}
\big| \partial^l_w \chi(w)\big|^2e^{- \frac12 m \Delta Q(z_0) |w-z|^2} \diff A(w) \bigg]^{\frac12} \\
\leq m^N K e^{\frac12 m Q(z)} \|u\|_m e^{- \frac{1}{36} m \Delta Q(z_0) r^2} 
\bigg[\int_{\mathbb{D}(z_0, r) \backslash \mathbb{D}(z_0, \frac23r)}\big| \partial^l_w \chi(w)\big|^2 \diff A(w)
\bigg]^{\frac12} \\
\leq 
C e^{\frac12 m Q(z)}\|u\|_m e^{-\delta m} 
\bigg[\int_{\mathbb{D}(z_0, r) \backslash \mathbb{D}(z_0, \frac23r)}\big| \partial^l_w \chi(w)\big|^2 \diff A(w)
\bigg]^{\frac12},
\end{multline} 
for some positive constants $C$ and $\delta$. This shows the desired statement for the case $k=0$. 

For $k >0$, we integrate by parts: 
\begin{multline}
\int_{\mathbb{D}(z_0,r) \backslash \mathbb{D}(z_0, \frac23 r)} 
\bar \partial_w^k u(w) \bar \partial_w^l \chi(w) A(z,w) e^{m(z-w)\theta(z,w)} \diff A(w) \\
= (-1)^k \int_{\mathbb{D}(z_0,r) \backslash \mathbb{D}(z_0, \frac23 r)} 
 u(w) \bar \partial_w^k \bigg[ \bar \partial_w^l \chi(w) A(z,w) e^{m(z-w)\theta(z,w)} \bigg] \diff A(w). 
\end{multline}
The statement follows after carrying out the differentiation and analyzing each term in the 
resulting sum as in the case $k=0$. 
\end{proof}

Next, we will prove    
an approximate reproducing identity for polyanalytic functions. For the case $q=2$, this was already done
in \cite{hh2}. Here we present the argument 
for general $q \geq 1$. 

\begin{prop} \label{reproprop}
There exists $\delta > 0$, independent of $m$, such that 
for all $z \in \mathbb{D}(z_0, \frac13 r)$ and $u \in A^2_{q,mQ}$, we have
\begin{equation} \label{Rqmapprorepro}
u(z) =  \int_{\mathbb{D}(z_0,r)} u(w) \chi(w) R_{q,m}(z,w) e^{m(z-w)\theta(z,w)} \diff A(w) 
+ \mathrm{O}(\| u \|_{m} e^{mQ(z)/2 - \delta m}), 
\end{equation}
where
\begin{multline}  \label{Rqmformula1}
R_{q,m}(z,w) = m \sum_{s=k}^{q-1} 
\frac{q! (-1)^k}{(q-1-k)! (k+1)! k!}
(\bar z - \bar w)^k \bar \partial_w^k \big( \bar \partial_w \theta e^{m(z-w) \theta} \big)
e^{-m(z-w) \theta}. \\
\end{multline}
The constant of the error term in \eqref{Rqmapprorepro} is independent of $u,z$ and $m$. 
\end{prop}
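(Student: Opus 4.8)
The plan is to start from the exact reproducing identity for $u \in A^2_{q,mQ}$ and massage it into the claimed form, with all boundary contributions absorbed into the $\mathrm{O}(\|u\|_m e^{mQ(z)/2-\delta m})$ error via Lemma~\ref{neglilemma}. Concretely, write $u \in A^2_{q,mQ}$ as $u(w)=\sum_{j=0}^{q-1}\bar w^j u_j(w)$ with $u_j$ analytic, as in \eqref{polyanaldecompo}. The key local device is the kernel $e^{m(z-w)\theta(z,w)}$: since $\theta$ is analytic in $z$, this is analytic in $z$, and by construction $\theta(z,z)=\partial_zQ(z)$, so on the diagonal $e^{m(z-w)\theta(z,w)}$ behaves like $e^{-mQ}$-type weights and reproduces analytic functions up to small corrections. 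The standard one-variable fact (essentially the content of the $q=1$ Bergman-kernel heuristic of \cite{bbs}, \cite{ahm1}) is that for analytic $v$,
\[
v(z) = m\int_{\mathbb{D}(z_0,r)} v(w)\,\chi(w)\,\bar\partial_w\theta(z,w)\,e^{m(z-w)\theta(z,w)}\,\diff A(w) + (\text{small}),
\]
where ``small'' means $\mathrm{O}(\|v e^{-mQ/2}\|_{L^2}\,e^{mQ(z)/2-\delta m})$; this comes from $\bar\partial_w\big(\chi e^{m(z-w)\theta}\big) = m\chi\,\bar\partial_w\theta\,e^{m(z-w)\theta} + (\bar\partial_w\chi)e^{m(z-w)\theta}$ together with Stokes' theorem and the fact that $\bar\partial_w\chi$ is supported in the annulus $\mathbb{D}(z_0,r)\setminus\mathbb{D}(z_0,\tfrac23 r)$, where Lemma~\ref{neglilemma} applies.

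The main step is then to upgrade this to the polyanalytic case by iterated integration by parts in $\bar w$. Applying $\bar\partial_w$ repeatedly to $e^{m(z-w)\theta(z,w)}$ produces factors of $m\bar\partial_w\theta$ plus lower-order derivatives of $\theta$; the combinatorial bookkeeping of how many times one differentiates the exponential versus the amplitude is exactly what generates the binomial-type coefficients $\frac{q!}{(q-1-k)!(k+1)!k!}$ and the powers $(\bar z-\bar w)^k \bar\partial_w^k(\cdots)$ in \eqref{Rqmformula1}. The cleanest way I would organize this: note that $\bar\partial_w^q\big[(\bar w -\bar z)^j \times (\text{analytic in }w)\big]=0$ for $j\le q-1$, so each $\bar w^j u_j(w)$ can be handled by integrating by parts $q$ times against $\chi\,e^{m(z-w)\theta}$; the boundary terms all land in the annulus and are killed by Lemma~\ref{neglilemma} (here one checks the hypotheses of that lemma, i.e. polynomial-in-$m$ bounds on $\bar\partial_w^k$ of the relevant amplitudes on the annulus, which hold because $\theta$ and its derivatives are bounded there and each $\bar\partial_w$ on $e^{m(z-w)\theta}$ costs a factor $m$). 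Collecting the surviving interior terms and re-summing over $j$ and over the number of exponential-hits gives precisely $R_{q,m}$. A sanity check worth doing: for $q=1$ the formula collapses to $R_{1,m}=m\,\bar\partial_w\theta$, matching the analytic case, and for $q=2$ it should reproduce the kernel from \cite{hh2}.

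The hard part will be the exact matching of constants: verifying that the iterated-integration-by-parts expansion, after discarding negligible boundary terms and regrouping, yields the specific coefficients $\frac{q!(-1)^k}{(q-1-k)!(k+1)!k!}$ with the sum running from $s=k$ (the role of the dummy $s$ versus $k$ in \eqref{Rqmformula1} needs to be pinned down — it appears $R_{q,m}$ is really a single sum over $k$ from $0$ to $q-1$, with each term already in ``differentiated-exponential'' form $\bar\partial_w^k(\bar\partial_w\theta\,e^{m(z-w)\theta})e^{-m(z-w)\theta}$). I would prove the coefficient identity by induction on $q$: assume the formula for $q-1$, peel off the top antiholomorphic degree, integrate by parts once more, and check that Pascal-type recursions among the coefficients close up. Everything else — the $L^2$-to-pointwise passage, the control of $e^{m(z-w)\theta}$ via \eqref{Qtaylor2}, and the annulus estimates — is routine given Lemma~\ref{neglilemma} and the setup conditions (1)--(4) on $r$.
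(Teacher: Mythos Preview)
Your proposal has the right spirit --- integration by parts with all annular boundary terms controlled by Lemma~\ref{neglilemma} --- but the mechanism you describe for the $q=1$ case is incorrect as written. You claim the identity comes from $\bar\partial_w\big(\chi e^{m(z-w)\theta}\big) = m\chi\,\bar\partial_w\theta\,e^{m(z-w)\theta} + (\bar\partial_w\chi)e^{m(z-w)\theta}$, but in fact $\bar\partial_w e^{m(z-w)\theta} = m(z-w)\,\bar\partial_w\theta\,e^{m(z-w)\theta}$, with an extra factor of $(z-w)$. Integrating your identity against an analytic $v$ and applying Stokes gives only $\int v\chi\,m(z-w)\bar\partial_w\theta\,e^{m(z-w)\theta}\,\diff A = \text{small}$, which does not reproduce $v(z)$. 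What is missing is the Cauchy kernel $1/(w-z)$: one needs a factor whose $\bar\partial_w$ produces a point mass at $z$. Without such a factor there is no ``exact reproducing identity'' to start from, and your subsequent plan (iterated integration by parts, induction on $q$ for the coefficients) has no anchor.

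The paper's proof supplies exactly this missing ingredient in a way that handles all $q$ at once: it uses the fundamental solution $\frac{1}{(q-1)!}\frac{(\bar w - \bar z)^{q-1}}{w-z}$ of $\bar\partial_w^q$, writes
\[
R_{q,m}(z,w)\,e^{m(z-w)\theta}\,\diff A(w) \;=\; \Big[\delta_z(w) - \tfrac{1}{(q-1)!}\,\bar\partial_w^q\Big(\tfrac{(\bar w-\bar z)^{q-1}}{w-z}\,e^{m(z-w)\theta}\Big)\Big]\,\diff A(w),
\]
and then moves the $\bar\partial_w^q$ onto $u\chi$ by a single integration by parts. Since $\bar\partial_w^q u = 0$, only terms with at least one $\bar\partial_w$ on $\chi$ survive, and Lemma~\ref{neglilemma} kills those. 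The explicit formula~\eqref{Rqmformula1} then drops out of one application of the distributional Leibniz rule to the bracket above --- the $\delta_z$ from $\bar\partial_w^q$ hitting $\frac{(\bar w-\bar z)^{q-1}}{w-z}$ cancels, and the remaining terms give the stated coefficients directly. No decomposition of $u$ into analytic pieces, and no induction on $q$, is needed.
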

\begin{proof}
We will use 
the fundamental solution $\frac{1}{(q-1)!} \frac{\bar w^{q-1}}{w}$ of the operator $\bar \partial_w^q$ 
(recall that our reference measure $\diff A$ is the usual area measure divided by $\pi$, so there is no 
need for that normalization here). Because of 
lemma \ref{neglilemma} and $q$-analyticity of $u$, we have
\begin{multline} 
\frac{1}{(q-1)!} \int_{\mathbb{C}} u(w) \chi(w) \bar \partial_w^q \bigg( \frac{(\bar w - \bar z)^{q-1}}{w-z} 
e^{m(z-w)\theta} \bigg) \diff A(w) \\ 
=\frac{(-1)^q}{(q-1)!} \int_{\mathbb{C}} \bar \partial_w^q \big( u(w) \chi(w)\big) \bigg( \frac{(\bar w - \bar z)^{q-1}}{w-z} 
e^{m(z-w)\theta} \bigg) \diff A(w) \\
= \frac{(-1)^q}{(q-1)!} \sum_{k=0}^{q-1} {q \choose k} \int_{\mathbb{D}(z_0,r) \backslash 
\mathbb{D}(z_0, \frac23 r)}
 \bar \partial_w^k u(w) \bar \partial_w^{q-k} \chi(w) 
\frac{\bar w - \bar z}{w-z} 
e^{m(z-w)\theta} \diff A(w) \\
= \mathrm{O}(\| u \|_{m} e^{mQ(z)/2 - \delta m})
\end{multline}
for some $\delta > 0$.

Denoting by $\delta_z(w) \diff A(w)$ the Dirac point mass at $z$, we get
\begin{multline}
u(z) = \int_{\mathbb{C}} u(w) \chi(w) \bigg[ \delta_z(w) - \frac{1}{(q-1)!}
\bar \partial_w^q \bigg( \frac{(\bar w - \bar z)^{q-1}}{w-z} 
e^{m(z-w)\theta} \bigg) \bigg] \diff A(w) + \\ 
\mathrm{O}(\|u \|_m  e^{\frac12 m Q(z) - \delta m}) \\
= \int_{\mathbb{C}} u(w) \chi(w) R_{q,m}(z,w) e^{m(z-w) \theta} \diff A(w) 
+ \mathrm{O}(\|u \|_m e^{\frac12 m Q(z) - \delta m}),
\label{bianalrepro}
\end{multline}
where 
\begin{equation}
R_{q,m}(z,w)e^{m(z-w)\theta} \diff A(w) = \bigg[ \delta_z(w) - \bar \partial_w^q \bigg( \frac{1}{(q-1)!} 
\frac{(\bar w - \bar z)^{q-1}}{w-z} e^{m(z-w)\theta} \bigg) \bigg] \diff A(w)
 .
\label{Rqmkernelsingularform}
\end{equation}
Applying Leibniz rule in the sense of distribution theory shows 
that  
\begin{multline} 
R_{q,m}(z,w) e^{m(z-w) \theta} = m \sum_{k=0}^{q-1} 
\frac{q! (-1)^k}{(q-1-k)! (k+1)! k!}
(\bar z - \bar w)^s \bar \partial_w^k \big( \bar \partial_w \theta e^{m(z-w) \theta} \big).  \\
\label{Rqmformula2}
\end{multline}
So actually, the singularity in \eqref{Rqmkernelsingularform} cancels and $R_{q,m}$ is $q$-analytic 
in $z$ and real-analytic in $w$. 
\end{proof}

A function $L_{q,m}: \mathbb{D}(z_0,r) \times \mathbb{D}(z_0,r) \to \mathbb{C}$ 
which is $q$-analytic in the first variable and real-analytic in the second 
will be called \emph{a local $q$-analytic reproducing kernel} $\text{mod}(e^{-\delta m})$ 
if for any $u \in A^2_{q,mQ}$ and $z \in \mathbb{D}(z_0,\frac13 r)$, we have
\begin{equation} 
u(z) =  \int_{\mathbb{D}(z_0,r)} u(w) \chi(w) L_{q,m}(z,w) e^{-mQ(w)} \diff A(w) \\
+ \mathrm{O}(\| u \|_{m} e^{mQ(z)/2 - \delta m}),
\end{equation}
where the constant of the error term can depend on $Q$, $z_0$ and $r$ but not on $u, z$ and $m$. 
Clearly, proposition \ref{reproprop} shows that $R_{q,m}(z,w)e^{mQ(z,w)}$ satisfies this condition. 
We define a local reproducing kernel $\text{mod}(m^{-k})$ similarly, 
just by replacing the factor $e^{-\delta m}$ in the error term by $m^{-k}$. 
If a local reproducing kernel $L_{q,m}$ with any error term is 
$q$-analytic in $\bar w$ (i.e. satisfies $\partial_w^q L_{q,m}(z,w) =0$), it will be called a local $q$-analytic 
Bergman kernel. 

In \cite{hh2}, we presented an algorithm producing local $q$-analytic Bergman kernels $\text{mod} (m^{-k})$ for arbitrary $k$,  based on a microlocal analysis technique of
 Berman-Berndtsson-Sj\"ostrand \cite{bbs} in the analytic case $q=1$. 
We will next 
show that when $q=1$, Taylor expansion and partial integration is enough. After this we show in the case $q=2$,
how this approach 
can be extended to more general polyanalytic functions. Later, in section \ref{blowupsection}, 
we will show that when $z_0 \in \mathcal{S} \cap \mathcal{N}_+$, local bianalytic Bergman kernels actually provide a near-diagonal
approximation of the kernel $K_{2,mQ,n}$ as $m, n \to \infty$.
 
\subsection{Computation of local analytic Bergman kernels}
The aim is to show how to compute local analytic Bergman kernels $\mod(m^{-k-\frac32})$ 
in the form 
\begin{equation}
 \bigg(
m\Lfun_{1,0}(z,w)+ \Lfun_{1,1}(z,w) + m^{-1} \Lfun_{1,2}(z,w)+\dots+m^{-k}\Lfun_{2, k+1}(z,w)
\bigg) e^{mQ(z,w)}, 
\end{equation}
where all the coefficient functions $\Lfun_{1,j}$ are analytic in $z$ and $\bar w$. 
We denote by $X_j(z,w)$ a function on $\mathbb{D}(z_0,r) \times \mathbb{D}(z_0,r)$ 
which is analytic in $z$ and real-analytic in $w$ 
but whose exact form is not of interest to us. The number $\delta$ will denote a positive 
number that can change at each step.  

Let $u \in A^2_{1,mQ}$ and $z \in \mathbb{D}(z_0, \frac13 r)$. We check from \eqref{Rqmformula1} that 
\begin{equation} \label{R1mformula}
R_{1,m}(z,w) = m \bar \partial_w \theta(z,w).
\end{equation}
Then, recalling proposition \ref{reproprop} and 
the Taylor expansion \ref{dbarthetataylor}, we compute 
\begin{multline}
u(z) = m \int_{\mathbb{D}(z_0,r)} u(w) \chi(w) \bigg[ b(z,w) +(w-z)\frac12 \partial_z b(z,w) + 
(w-z)^2 X_1(z,w) \bigg] \\ \times e^{m(z-w)\theta} \diff A(w)
+ \mathrm{O}(\|u\|_m e^{\frac12 m Q(z) - \delta m })  \\
= m \int_{\mathbb{D}(z_0,r)} u(w) \chi(w) b(z,w) e^{m(z-w)\theta} \diff A(w) \\
- \int_{\mathbb{D}(z_0,r)} u(w) \chi(w) \frac{1}{\bar \partial_w \theta} \bigg[ \frac12 \partial_z b(z,w) + (w-z)X_1(z,w) \bigg] 
\bar \partial_w e^{m(z-w) \theta} \diff A(w) \\
= m \int_{\mathbb{D}(z_0,r)} u(w) \chi(w) b(z,w) e^{m(z-w)\theta} \diff A(w) \\ 
+\int_{\mathbb{D}(z_0,r)} u(w) \chi(w) \bar \partial_w \bigg[ \frac{1}{\bar \partial_w \theta} 
\bigg( \frac12 \partial_z b(z,w) + (w-z)X_1(z,w) \bigg) \bigg] e^{m(z-w) \theta} \diff A(w) \\
+\mathrm{O}(\|u \|_m e^{\frac12 m Q(z) - \delta m}), 
\label{analsecondterm}
\end{multline}
where for the last equality, we needed an application of lemma \ref{neglilemma}. Here and 
later in computations of this nature, the choice of $\delta$ and the error constant 
is independent of $u$, $m$ and $z$. 

We Taylor expand $\frac{1}{\bar \partial_w \theta} = \frac1b + (w-z)X_2$ using \eqref{dbarthetataylor},  
and continue the analysis:
\begin{multline} \label{analyticlocalkernel}
u(z) = \int_{\mathbb{C}} u(w) \chi(w) 
\bigg(m b + \frac12 \bar \partial_w \frac{\partial_z b}{b} \bigg) e^{m(z-w)\theta} \diff A(w) \\ 
+ \int_{\mathbb{D}(z_0,r)} u(w) \chi(w) (w-z) X_3 e^{m(z-w) \theta} \diff A(w)  
+\mathrm{O}(\|u\|_m e^{\frac12 m Q(z) - \delta m}) \\
=   \int_{\mathbb{D}(z_0,r)} u(w) \chi(w) 
\bigg(m b + \frac12 \bar \partial_w \frac{\partial_z b}{b} \bigg) e^{m(z-w)\theta} \diff A(w) \\ 
- \frac{1}{m} \int_{\mathbb{D}(z_0,r)} u(w) \chi(w)   X_3 
\frac{1}{\bar \partial_w \theta} \bar \partial_w e^{m(z-w) \theta} 
\diff A(w) + \mathrm{O}(\|u \|_m e^{\frac12 m Q(z) - \delta m}) \\
=   \int_{\mathbb{D}(z_0,r)} u(w) \chi(w) 
\bigg(m b + \frac12 \bar \partial_w \frac{\partial_z b}{b} \bigg) e^{m(z-w)\theta} \diff A(w) \\ 
+ \frac1m \int_{\mathbb{D}(z_0,r)} u(w) \chi(w) \bar \partial_w \bigg[\frac{1}{\bar \partial_w \theta} X_3 \bigg] 
e^{m(z-w) \theta} 
\diff A(w) + \mathrm{O}(\|u \|_m e^{\frac12 m Q(z) - \delta m}) \\
= \int_{\mathbb{D}(z_0,r)} u(w) \chi(w) 
\bigg(m b + \frac12 \bar \partial_w \frac{\partial_z b}{b} \bigg) e^{m(z-w)\theta} \diff A(w) \\ 
+ \mathrm{O}(\|u\|_m e^{\frac12 m Q(z)} m^{-\frac32}).
\end{multline}
For the third equality, lemma \ref{neglilemma} was again used. 
The fact that we get a factor $m^{-3/2}$ in last error term 
is a consequence of a small computation which we include here for the convenience 
of the reader. Let $Y: \mathbb{D}(z_0,r) \times \mathbb{D}(z_0,r) \to \mathbb{C}$
be $C^2$-smooth and assume 
\[
\max_{z, w \in \mathbb{D}(z_0,r)} |Y(z,w)| \leq C
\]
for a constant $C > 0$. Then, using \eqref{Qtaylor2} and Cauchy-Schwarz inequality, 
\begin{multline} \label{computation1}
\frac{1}{m} \bigg| \int_{\mathbb{D}(z_0,r)} u(w)\chi(w) Y(z,w) e^{m(z-w)\theta} \diff A(w) \bigg| \\
\leq \frac{C}{m} \int_{\mathbb{D}(z_0,r)} 
|u(w)|e^{m\mathrm{Re} Q(z,w)- mQ(w)} \diff A(w) \\
\\ \leq \frac{C}{m} \|u\|_m e^{\frac12 m Q(z)}
\bigg[ \int_{\mathbb{D}(z_0,r)}
e^{-\frac12m \Delta Q(z_0)|w-z|^2} \diff A(w) \bigg]^{\frac12} \\
= \frac{C}{m^{3/2}\sqrt{\Delta Q(z_0)}} 
\|u\|_m e^{\frac12 m Q(z)} \bigg[ \int_{\mathbb{D}(z_0,r)}
e^{-\frac12|w-z|^2} \diff A(w) \bigg]^{\frac12} \\
= \mathrm{O}(\|u\|_m e^{\frac12 m Q(z)} m^{-\frac32}).
\end{multline}

The conclusion is that 
\[
 \bigg[m b(z,w) + \frac12 \bar \partial_w \frac{\partial_z b(z,w)}{b(z,w)}\bigg]e^{mQ(z,w)}
\]
is a local analytic Bergman kernel $\mod (m^{-\frac32})$. It is possible 
to continue in the same way and compute local Bergman kernels $\mod(m^{-k-\frac12})$
for any positive integer $k$;  in order to do this, one just has to use higher order Taylor 
expansions of $\bar \partial_w \theta$.  Notice that the local Bergman kernels provided by this 
process are indeed conjugate analytic in $w$; this follows from the fact that the coefficient functions 
in the Taylor expansion \eqref{dbarthetataylor} have this property. 

\begin{rem}
In the computation of local Bergman kernels, we do not necessarily need to require that $u \in A^2_{1,mQ}$: it 
is enough to assume that $u$ is analytic in $\mathbb{D}(z_0,r)$ and then replace $\|u\|_m^2$ by 
$\int_{\mathbb{D}(z_0,r)} |u(w)|^2 e^{-mQ(w)} \diff A(w)$ in the error terms.
\end{rem}

\subsection{Local bianalytic Bergman kernels} \label{bianalyticsection}
We will now explain how to extend the above method to a more general polyanalytic setting. 
We focus on the case $q=2$. The functions satisfying $\bar \partial^2 u=0$ 
will be called \emph{bianalytic}. 
The intention is to show how to compute local bianalytic Bergman kernels $\mod(m^{-k-\frac32})$ 
in the form 
\begin{equation} 
\bigg( m^2 \Lfun_{2,0}(z,w) + m^{1}\Lfun_{2,1}(z,w) + \dots + m^{-k}\Lfun_{2,2+k}(z,w) \bigg) e^{mQ(z,w)},
\label{bilocalkernel}
\end{equation}
where $z,w \in \mathbb{D}(z_0,r)$ and all the 
coefficient functions are bianalytic in $z$ and $\bar w$.
Proceeding as in the case $q=1$, we will expand the kernel $R_{2,m}$ in powers of $w-z$ so that the coefficients 
are bianalytic in $\bar w$. 
The following proposition will replace the partial integration that 
was performed in the analytic setting.  

\begin{prop} \label{partintprop}
Let $A: 
\mathbb{D}(z_0, r) \times \mathbb{D}(z_0, r) \to \mathbb{C}$ be a $C^2$-smooth function. 
Then, there exists $\delta > 0$ such that for any  
$u \in A^2_{q,mQ}$ and $z \in \mathbb{D}(z_0, \frac13 r)$, we have 
\begin{multline} \label{partintlemma}
\int u(w) \chi(w) m^2 (z-w)^2 A(z,w) e^{m(z-w) \theta} \diff A(w) \\
= \int u(w) \chi(w) \bigg[ -\bar \partial^2_w \frac{A(z,w)}{(\bar \partial_w \theta)^2} 
+ 
m(z-w) \bigg(-2\frac{\bar \partial_w A(z,w)}{\bar \partial_w \theta} + 
3 \frac{A(z,w) \bar \partial_w^2 \theta}{(\bar \partial_w \theta)^2} \bigg) 
\bigg]  e^{m(z-w) \theta} \diff A(w) \\
+  \mathrm{O}(\|u \|_m e^{\frac12 m Q(z) - \delta m}).
\end{multline}
The constant of the error term is independent of $u$, $m$ 
and $z$. 
\end{prop}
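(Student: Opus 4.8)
The plan is to integrate by parts twice in the $\bar\partial_w$-variable, exactly as in the analytic computation leading to \eqref{analyticlocalkernel}, using the identity $\bar\partial_w e^{m(z-w)\theta} = m(z-w)(\bar\partial_w\theta)e^{m(z-w)\theta}$ to trade each factor of $m(z-w)$ for one $\bar\partial_w$ derivative. Concretely, write
\[
m^2(z-w)^2 A\, e^{m(z-w)\theta}
= m(z-w)\,\frac{A}{\bar\partial_w\theta}\,\bar\partial_w\!\big(e^{m(z-w)\theta}\big),
\]
and then again
\[
m(z-w)\,\frac{1}{\bar\partial_w\theta}\,G\,\bar\partial_w\!\big(e^{m(z-w)\theta}\big)
= \frac{G}{(\bar\partial_w\theta)^2}\,\bar\partial_w^2\!\big(e^{m(z-w)\theta}\big)
-\frac{1}{\bar\partial_w\theta}\Big(\bar\partial_w\tfrac{G}{\bar\partial_w\theta}\Big)\bar\partial_w\!\big(e^{m(z-w)\theta}\big)+\cdots
\]
keeping careful track of the Leibniz terms. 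Here I use that $\bar\partial_w\theta \neq 0$ on $\mathbb{D}(z_0,r)$ by condition (3), so dividing by $\bar\partial_w\theta$ and by $(\bar\partial_w\theta)^2$ is legitimate and produces $C^2$-smooth (indeed real-analytic in $w$) coefficients.

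First I would move the $\chi(w)$ weight and the $u(w)$ factor through the integration by parts. Since $u$ is $q$-analytic, the only boundary-type contributions come from derivatives landing on $\chi$, and these are supported on the annulus $\mathbb{D}(z_0,r)\setminus\mathbb{D}(z_0,\tfrac23 r)$; by Lemma \ref{neglilemma} (applied with the $q$-analytic-in-$z$, real-analytic-in-$w$ amplitude built from $A$, $\theta$ and their derivatives, which is bounded polynomially in $m$ on that annulus) every such term is $\mathrm{O}(\|u\|_m e^{\frac12 mQ(z)-\delta m})$ and gets absorbed into the error. After the first integration by parts I pick up a term with $\bar\partial_w\!\big(\frac{A}{\bar\partial_w\theta}\big)$ multiplied by $m(z-w)e^{m(z-w)\theta}$; after the second I pick up the $\bar\partial_w^2$ term giving $-\bar\partial_w^2\frac{A}{(\bar\partial_w\theta)^2}$ and another $m(z-w)$ term. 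Collecting the two $m(z-w)$-coefficients and simplifying using $\bar\partial_w\frac{1}{(\bar\partial_w\theta)^2} = -2\frac{\bar\partial_w^2\theta}{(\bar\partial_w\theta)^3}$ should yield exactly the combination $-2\frac{\bar\partial_w A}{\bar\partial_w\theta}+3\frac{A\,\bar\partial_w^2\theta}{(\bar\partial_w\theta)^2}$ displayed in \eqref{partintlemma}.

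The main obstacle, or at least the place demanding care, is the bookkeeping of Leibniz terms: when $\bar\partial_w$ hits the product $\chi(w)\,\frac{A}{\bar\partial_w\theta}\,e^{m(z-w)\theta}$ several pieces are generated, and one must verify that after the dust settles only the two advertised terms survive with the right numerical coefficients ($1$, $-2$, $3$), all other pieces being either of the claimed form or negligible. A secondary point is confirming that the $\chi$-derivative terms are genuinely covered by Lemma \ref{neglilemma}: one needs the amplitudes appearing there (e.g. $\frac{A}{\bar\partial_w\theta}$, $\bar\partial_w\frac{A}{\bar\partial_w\theta}$, $\frac{A}{(\bar\partial_w\theta)^2}$ and such, times powers of $m(z-w)$) to satisfy the polynomial bound $|\bar\partial_w^k(\cdot)|\le m^N K$ on the annulus uniformly in $z\in\mathbb{D}(z_0,\tfrac13 r)$, which holds because $z-w$ is bounded there, $A$ is $C^2$, $\bar\partial_w\theta$ is bounded away from zero, and the exponential $e^{m(z-w)\theta}$ is what Lemma \ref{neglilemma} already accounts for. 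Once these two verifications are in place the proposition follows.
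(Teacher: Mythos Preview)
Your approach is essentially the same as the paper's: repeated integration by parts using $\bar\partial_w e^{m(z-w)\theta}=m(z-w)(\bar\partial_w\theta)e^{m(z-w)\theta}$, with Lemma~\ref{neglilemma} absorbing all terms where a derivative lands on $\chi$.

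One point in your sketch deserves sharpening, however. You write that ``since $u$ is $q$-analytic, the only boundary-type contributions come from derivatives landing on $\chi$''. This is not quite the full story: after the first integration by parts a term $-\int \bar\partial_w u\cdot \chi\, m(z-w)\frac{A}{\bar\partial_w\theta}\,e^{m(z-w)\theta}$ appears, and $\bar\partial_w u$ is \emph{not} zero (only $\bar\partial_w^2 u=0$) nor is it supported on the annulus. This term is in fact essential: it contributes both to the $-\bar\partial_w^2\frac{A}{(\bar\partial_w\theta)^2}$ piece and to the $m(z-w)$ piece of the final formula. The paper handles it exactly as one would expect---rewrite $m(z-w)\frac{A}{\bar\partial_w\theta}e^{m(z-w)\theta}=\frac{A}{(\bar\partial_w\theta)^2}\bar\partial_w e^{m(z-w)\theta}$, integrate by parts once more (now $\bar\partial_w^2 u=0$ kills the top term), and then integrate by parts a third time to move the surviving $\bar\partial_w u$ back onto the amplitude. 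This is the place where bianalyticity is genuinely used; your description of ``two integrations by parts'' undercounts and your phrasing could be read as suggesting the $\bar\partial_w u$ terms are negligible, which they are not. Once you track that term through, the coefficients $-1$, $-2$, $3$ fall out from $-\bar\partial_w\frac{A}{\bar\partial_w\theta}-\bar\partial_w\theta\cdot\bar\partial_w\frac{A}{(\bar\partial_w\theta)^2}=-2\frac{\bar\partial_w A}{\bar\partial_w\theta}+3\frac{A\bar\partial_w^2\theta}{(\bar\partial_w\theta)^2}$, and the rest of your outline goes through.
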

\begin{proof}
The proof involves only partial integration. 
\begin{multline} \label{partint1}
\int_{\mathbb{D}(z_0,r)} u(w) \chi(w) m^2 (z-w)^2 A(z,w) e^{m(z-w) \theta} \diff A(w) \\
= \int_{\mathbb{D}(z_0,r)} u(w) \chi(w) m(z-w) \frac{A(z,w)}{\bar \partial_w \theta} \bar \partial_w 
e^{m(z-w) \theta} \diff A(w)
\\
= - \int_{\mathbb{D}(z_0,r)} u(w) \chi(w) m(z-w) \bar \partial_w \frac{A}{\bar \partial_w \theta}  e^{m(z-w) \theta} \diff A(w)\\
- \int_{\mathbb{D}(z_0,r)} \bar \partial_w u(w) \chi(w) m(z-w) \frac{A}{\bar \partial_w \theta} e^{m(z-w) \theta} \diff A(w)\\ 
- \int_{\mathbb{D}(z_0,r) \backslash \mathbb{D}(z_0, \frac23 r)}  u(w) \bar \partial_w \chi(w) m(z-w) \frac{A}{\bar \partial_w \theta} e^{m(z-w) \theta} \diff A(w)\\
= - \int_{\mathbb{D}(z_0,r)} u(w) \chi(w) m(z-w) \bar \partial_w \frac{A}{\bar \partial_w \theta}  e^{m(z-w) \theta} \diff A(w) \\
- \int_{\mathbb{D}(z_0,r)} \bar \partial_w u(w) \chi(w) m(z-w) \frac{A}{\bar \partial_w \theta} e^{m(z-w) \theta} \diff A(w) 
+\mathrm{O} \big( \|u \|_m e^{\frac12 m Q(z) - \delta m} \big),
\end{multline}
where we used lemma \ref{neglilemma} to get the last equality. 

We leave the first integral in the last expression of \eqref{partint1} 
as such and continue with the analysis of the second. 
\begin{multline} \label{partint4}
-\int \bar \partial_w u(w) \chi(w) m(z-w) \frac{A}{\bar \partial_w \theta} e^{m(z-w) \theta} 
\diff A(w) \\
= -\int \bar \partial_w u(w) \chi(w) \frac{A}{(\bar \partial_w \theta)^2} \bar \partial_w e^{m(z-w) \theta} 
\diff A(w) \\
= \int \bar \partial_w u(w) \chi(w) \bar \partial_w \bigg( \frac{A}{(\bar \partial_w \theta)^2} 
\bigg)  
e^{m(z-w) \theta} \diff A(w) \\
+\int \bar \partial_w u(w) \bar \partial_w \chi(w) \bigg( \frac{A}{(\bar \partial_w \theta)^2} 
\bigg)  
e^{m(z-w) \theta} \diff A(w)
\\
= \int \bar \partial_w u(w) \chi(w) \bar \partial_w \bigg( \frac{A}{(\bar \partial_w \theta)^2} 
\bigg)  
e^{m(z-w) \theta} \diff A(w) +  \mathrm{O}(\|u \|_m e^{\frac12 m Q(z) - \delta m})  \\
= -\int u(w) \bar \partial_w \bigg[\chi(w) \bar \partial_w \bigg( \frac{A}{(\bar \partial_w \theta)^2} 
\bigg)  
e^{m(z-w) \theta}\bigg] \diff A(w) +  \mathrm{O}(\|u \|_m e^{\frac12 m Q(z) - \delta m}) \\
= 
- \int u(w) \chi(w) \bar \partial_w \bigg[
\bar \partial_w \bigg( \frac{A}{(\bar \partial_w \theta)^2} 
\bigg)  
e^{m(z-w) \theta}\bigg] \diff A(w) +  \mathrm{O}(\|u \|_m e^{\frac12 m Q(z) - \delta m}),
\end{multline}
where the third and the fifth equality depended on lemma \ref{neglilemma}.  
After carrying out the differentiations in the last integrand, the assertion follows 
from combining \eqref{partint1} with \eqref{partint4}. 
\end{proof}
We will now illustrate how the result 
can be used by computing two first 
terms of the expansion \eqref{bilocalkernel}. 
We let $X_j$ stand for a function defined on $\mathbb{D}(z_0,r) \times \mathbb{D}(z_0,r)$
which is bianalytic in the first variable and real-analytic in the second. 

We see from \eqref{Rqmformula1} that 
\begin{equation} \label{R2mformula}
R_{2,m}(z,w) = 2m \bar \partial_w \theta(z,w) - m(\bar z - \bar w) \bar \partial_w^2 \theta(z,w) - m^2
|z-w|^2 \big[\bar \partial_w \theta(z,w) \big]^2.
\end{equation}
By Taylor expanding this expression with \eqref{dbarthetataylor}, we get
\begin{multline}
u(z) = \int u(w) \chi(w) \bigg[ 2m \big(b + \frac12 (w-z) \partial_z b \big) -m (\bar z - \bar w) 
\big( \bar \partial_w b + \frac12 (w-z)\bar \partial_w \partial_z b \big) \\
- m^2|z-w|^2b^2 
+ m^2(z-w)^2 (\bar z - \bar w) b \partial_z b \\
+ m(z-w)^2 X_1(z,w) + m^2 (z-w)^3 X_2(z,w) \bigg] e^{m(z-w)\theta} \diff A(w) \\
+\mathrm{O}\big(\|u\|_m e^{\frac12 m Q(z)} e^{-\delta m}\big).
\label{bianalreproexp}
\end{multline}
We require the coefficients in \eqref{bilocalkernel} 
to be bianalytic in $z$ and $\bar w$, which means in particular 
that they may not contain the factor $(z-w)$ to any degree higher than $1$. Only the last three 
terms in \eqref{bianalreproexp} contain $(z-w)^2$, so they are the only ones which require 
further analysis. 
For the term with $X_1$,  proposition \ref{partintprop} shows that 
\begin{multline}
\int_{\mathbb{C}} u(w) \chi(w) m(z-w)^2 X_1(z,w) e^{m(z-w)\theta} \diff A(w) \\
= \int_{\mathbb{C}} u(w) \chi(w) \bigg( \frac1m X_3(z,w) + (z-w) X_4(z,w) \bigg) 
e^{m(z-w)\theta} \diff A(w) \\
+ \mathrm{O}(\|u \|_m e^{\frac12 m Q(z) - \delta m})
= \mathrm{O}(\|u \|_m e^{\frac12 m Q(z)} m^{-1/2})
\label{X1}
\end{multline}
As we are only interested in the coefficients for $m^2$ and $m$, we conclude that this term is 
negligible for our purposes. Note that the extra factor $m^{-1/2}$ in the error term 
comes from the computation similar to \eqref{computation1}. 
For the term with $X_2$, two applications of proposition \ref{partintprop} are needed to show this: 
\begin{multline}
\int_{\mathbb{C}} u(w) \chi(w) m^2(z-w)^3 X_2(z,w) e^{m(z-w)\theta} \diff A(w) \\
= \int_{\mathbb{C}} u(w) \chi(w) \bigg( (z-w) X_5(z,w) + m(z-w)^2 X_6(z,w) \bigg) 
e^{m(z-w)\theta} \diff A(w)  \\ 
+ \mathrm{O}(\|u \|_m e^{\frac12 m Q(z) - \delta m}) \\
= \int_{\mathbb{C}} u(w) \chi(w) \bigg( (z-w) X_5(z,w)  
+ \frac{1}{m} X_7(z,w) + (z-w) X_8(z,w) \bigg) e^{m(z-w)\theta} \diff A(w) \\ 
+  \mathrm{O}(\|u \|_m e^{\frac12 m Q(z) - \delta m}) 
= \mathrm{O}(\|u \|_m e^{\frac12 m Q(z)}m^{-1/2}).
\end{multline}
We now set 
\[ A_1(z,w) = (\bar z - \bar w) b \partial_z b, \]
and analyze the corresponding term in \eqref{bianalreproexp} with the proposition \ref{partintprop}. 
\begin{multline}
\int u(w) m^2 (z-w)^2 A_1(z,w) e^{m(z-w) \theta} \diff A(w) \\
= \int u(w) \bigg[- \bar \partial^2_w \frac{A_1(z,w)}{(\bar \partial_w \theta)^2} + 
m(z-w) A_2(z,w) 
\bigg] e^{m(z-w) \theta} \diff A(w) \\
+ \mathrm{O}(\|u\|_m e^{\frac12 m Q(z) - \delta m}),
\label{A1}
\end{multline}
where 
\begin{equation}
A_2(z,w) = 
\bigg(- 2\frac{ \bar \partial_w A(z,w)}{\bar \partial_w \theta} + 
3 \frac{A(z,w) \bar \partial_w^2 \theta }{(\bar \partial_w \theta)^2} \bigg).
\label{A2}
\end{equation}
We expand $A_2$ in powers in $z-w$: 
\begin{multline}
A_2(z,w) = 
- 2\frac{ \bar \partial_w A_1(z,w)}{\bar \partial_w \theta} + 
3 \frac{A_1(z,w) \bar \partial_w^2 \theta }{(\bar \partial_w \theta)^2} 
= - 2\frac{ \bar \partial_w A_1(z,w)}{b} + 
3 \frac{A_1(z,w) \bar \partial_w b }{b^2} 
+ (z-w)X_9  \\
= 2 \partial_z b + (\bar z - \bar w) 
\bigg(-2 \bar \partial_w \partial_z b 
+ \frac{ \partial_z b \cdot \bar \partial_w b}{b} \bigg)
+(z-w)X_9.
\label{A2expanded}
\end{multline}
We implement this into \eqref{A1} and see by the same argument as in \eqref{X1} that 
the term with $X_9$ is negligible
We now put everything together within \eqref{bianalreproexp}: 
\begin{multline}
u(z)= \int_{\mathbb{C}} u(w) \chi(w)\bigg[m^2 \Lfun_{2,0} + m\Lfun_{2,1}\bigg] e^{m(z-w) \theta} 
\diff A(w) 
+\mathrm{O} (\|u \|_m m^{-\frac12} e^{\frac12 m Q(z)}) , 
\end{multline}
where 
\begin{equation} \label{b22}
\Lfun_{2,0} = -|z-w|^2 b^2  
\end{equation}
and
\begin{equation} \label{b21}
\Lfun_{2,1} = 2 b + (z-w) \partial_z b - (\bar z- \bar w) \bar \partial_w b 
+ |z-w|^2 \bigg ( -\frac32 \bar \partial_w \partial_z b + \frac{\partial_z b 
\cdot \bar \partial_w b}{b} \bigg). 
\end{equation}

We conclude that 
\begin{multline}
K^{(2)}_{2,m}(z,w) := \bigg\{-m^2|z-w|^2 b^2 \\
+m \bigg[2 b + (z-w) \partial_z b - (\bar z- \bar w) \bar \partial_w b 
+ |z-w|^2 \bigg ( -\frac32 \bar \partial_w \partial_z b + \frac{\partial_z b 
\cdot \bar \partial_w b}{b} \bigg)\bigg] \bigg\} e^{mQ(z,w)}
\end{multline}
is a local bianalytic Bergman kernel $\mod(m^{-1/2})$. 

We have also computed the third term:
\begin{equation} \label{b20}
\Lfun_{2,2} = 2 \bar \partial_w \partial_z \log b + 
(\bar w - \bar z) \bar \partial_w^2 \partial_z \log b
+ (z-w) \bar \partial_w \partial_z^2 \log b
+|z-w|^2 M(z,w), 
\end{equation}
where 
\begin{multline}
M = 
\frac32 \frac{\bar \partial_w^2 \partial_z b \cdot \partial_z b}{b^2} 
-\frac{13}{2} \frac{\partial_z b \cdot \bar \partial_w \partial_z b \cdot \bar \partial_w b}{b^3}
+\frac32 \frac{(\bar \partial_w \partial_z b)^2}{b^2}\\
-\frac{(\partial_z b)^2 (\bar \partial_w^2 b)}{b^3}
+ \frac{17}{4} \frac{(\partial_z b)^2 \cdot (\bar \partial_w b)^2}{b^4}
-\frac23 \frac{\partial_z^2 \bar \partial_w^2 b}{b}
+\frac32 \frac{\partial_z^2 \bar \partial_w b \cdot \bar \partial_w b}{b^2} \\
-\frac{(\partial_z^2 b)(\bar \partial_w b)^2}{b^3}
+\frac13 \frac{\bar \partial_w^2 b \cdot \partial_z^2 b}{b^2}.
\end{multline}
The rather long computations are presented in the appendix. 

\section{Some estimates for the one-point function } \label{onepointsection}
In this section, we provide an apriori bound for the \emph{one point function}
\[
\Gamma^1_{2,mQ,n}(z):=K_{2,mQ,n}(z,z)e^{-mQ(z)}
\]
for $z \in \mathcal{S}$ and show that for 
$z \in \mathbb{C} \backslash \mathcal{S}$ we have convergence to zero with a rate that is exponential in $m$.  

In \cite{ahm1}, analogous results were shown for kernels $K_{1,m,n}$ relying heavily 
on the fact that $\log |f|$ is subharmonic 
whenever $f$ is an analytic function. This is no longer true when $f$ is more general polyanalytic function, so 
we must use a different strategy.

Let us recall the following result, which is just proposition 8.1 of \cite{hh2} in slightly altered form. 
Later, in lemma \ref{pointwiselemma2}, we will show how a more general version of this result follows 
from proposition 4.1 of that paper. 

\begin{lem} \label{pointwiselemma}
For any bianalytic function $u$ and $z \in \mathbb{C}$, 
we have 
\begin{equation} \label{pointwise1}
|u(z)|^2e^{-mQ(z)} \leq m(8+48A^2)e^A \int_{\mathbb{D}(z, m^{-1/2})} |u(w)|^2 e^{-mQ(w)} \diff A(w)
\end{equation}
and 
\begin{equation} \label{pointwise2}
|\bar \partial u(z)|^2e^{-mQ(z)} \leq 3m^2e^A \int_{\mathbb{D}(z, m^{-1/2})} |u(w)|^2 e^{-mQ(w)} \diff A(w),
\end{equation}
where 
\[
A:= \sup_{w \in \mathbb{D}(z, m^{-1/2})} |\Delta Q(w)|
\]
\end{lem}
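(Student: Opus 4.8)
The plan is to exploit the structure of bianalytic functions via the decomposition $u(w) = u_0(w) + \bar w\, u_1(w)$ with $u_0, u_1$ analytic, but it is cleaner to work directly from the representation $\bar\partial u = g$ where $g = \bar\partial u_1$ is itself analytic (since $\bar\partial^2 u = 0$), together with the observation that $\partial u$ is bianalytic as well. Concretely, first I would reduce to a local statement: fix $z$, rescale to the disk $\mathbb{D}(z, m^{-1/2})$, and on that small disk replace $Q$ by its value plus a controlled perturbation, using that $|Q(w) - Q(z) - 2\re[(\partial Q(z))(w-z)]| \le A |w-z|^2 \le A m^{-1}$ for $w \in \mathbb{D}(z, m^{-1/2})$, where $A = \sup_{\mathbb{D}(z,m^{-1/2})}|\Delta Q|$. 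The linear part $2\re[(\partial Q(z))(w-z)]$ can be absorbed by multiplying $u$ by the unimodular-on-nothing but harmless entire factor $e^{-m(\partial Q(z))(w-z)}$ (this is the standard ``gauge transform'' trick: it changes neither bianalyticity nor the pointwise/integral quantities after accounting for an $e^A$-type factor), so that up to the factor $e^{A}$ we may assume $Q$ is constant $= Q(z)$ on the disk. That turns both inequalities into unweighted statements about bianalytic functions on a disk, with the weight contributing only the stated $e^A$.

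The core is then a sub-mean-value-type inequality for bianalytic functions: for $v$ bianalytic on $\mathbb{D}(0,\rho)$, control $|v(0)|^2$ and $|\bar\partial v(0)|^2$ by $\rho^{-2}\int_{\mathbb{D}(0,\rho)}|v|^2\,dA$ (with our normalization $dA = \pi^{-1}dx\,dy$, so $\int_{\mathbb{D}(0,\rho)} dA = \rho^2$). Writing $v(w) = v_0(w) + \bar w\, v_1(w)$, I would expand $v_0(w) = \sum a_k w^k$ and $v_1(w) = \sum b_k w^k$ and compute $\int_{\mathbb{D}(0,\rho)} |v|^2 dA$ explicitly: the cross terms $\int \bar w\, w^{j}\overline{w^k}\,dA$ vanish unless $k = j+1$, and one is left with a sum $\sum_k |a_k|^2 \frac{\rho^{2k+2}}{k+1} + \sum_k |b_k|^2 \frac{\rho^{2k+4}}{k+2} + 2\re\sum_k a_{k+1}\overline{b_k}\frac{\rho^{2k+4}}{k+2}$, which after completing the square in $(a_{k+1}, b_k)$ is bounded below by a positive quadratic form; pulling out $|a_0|^2\rho^2$ (the $v(0)$ contribution) and $|a_1 + \bar 0 \cdot b_0|$-type terms gives $|v(0)|^2 \le C \rho^{-2}\int|v|^2\,dA$ with $C$ absolute, and likewise $\bar\partial v(0) = v_1(0) = b_0$ is controlled, picking up one extra power $\rho^{-2}$, i.e. $|\bar\partial v(0)|^2 \le C'\rho^{-4}\int|v|^2 dA$. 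With $\rho = m^{-1/2}$ this produces the factors $m$ and $m^2$ respectively; tracking the constants from completing the square yields the explicit $8 + 48A^2$ (the $A^2$ enters because after the gauge transform the ``constant'' weight is only constant up to $e^{\pm Am^{-1}}$, and expanding that, together with the derivative of the gauge factor contributing a term of size $|m\,\partial Q(z)|\,m^{-1/2}$ whose square is $O(A)$... actually $O(|\partial Q(z)|^2)$ — here one should instead note the $A^2$ comes purely from iterating the gauge bound, so I'd double-check the bookkeeping) and $3$ in the second estimate.

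The main obstacle I anticipate is \emph{not} the analytic mechanism — that is routine — but getting the constants exactly in the stated form $m(8+48A^2)e^A$ and $3m^2 e^A$ rather than a bare ``$C m$''. This requires being careful that (i) the gauge transform $u \mapsto u\, e^{-m(\partial Q(z))(w-z)}$ genuinely preserves bianalyticity (it does: multiplication by an entire function preserves $\bar\partial^2 = 0$) and that its effect on $|\bar\partial u|$ is accounted for by Leibniz, $\bar\partial(u e^{-\cdots}) = (\bar\partial u) e^{-\cdots}$ since the exponent is anti-holomorphic-free, so actually $\bar\partial$ passes through cleanly and the $A^2$ must come from elsewhere — most likely from bounding $e^{m(Q(z)-Q(w))} \le e^{A}$ plus a second-order remainder handled crudely; and (ii) the quadratic-form lower bound is taken with the sharp completing-the-square constant. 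Since the paper says this is just Proposition 8.1 of \cite{hh2} restated, I would in practice cite that and only sketch the above as motivation, but if a self-contained argument is wanted, the explicit Taylor/Fourier coefficient computation on $\mathbb{D}(0, m^{-1/2})$ is the way to go.
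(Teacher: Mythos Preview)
Your general strategy --- rescale to the disk $\mathbb{D}(z,m^{-1/2})$, peel off a harmonic part of $Q$ by a holomorphic gauge, and reduce to an unweighted sub-mean-value inequality for bianalytic functions --- is sound in spirit, and the paper even sketches a version of it at the end of Section~\ref{generalqsection} using the Riesz decomposition $Q=h+\mathbf{G}[\Delta Q]$ and Koshelev's kernels. But the execution you describe has a concrete error that explains your confusion about the $A^2$: your Taylor bound
\[
\bigl|Q(w)-Q(z)-2\re[\partial Q(z)(w-z)]\bigr|\le A\,|w-z|^2
\]
is not valid. The quantity $A=\sup|\Delta Q|=\sup|\partial\bar\partial Q|$ controls only \emph{one} of the three second-order coefficients; the harmonic piece $\re[\partial^2 Q(z)(w-z)^2]$ is not bounded by $A|w-z|^2$ and does not disappear under your linear gauge. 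To make this route work you must remove the \emph{entire} harmonic part $h$ via $|H_m|^2=e^{-mh}$ with $H_m$ holomorphic (so $uH_m$ stays bianalytic), after which the residual weight is $e^{-m\mathbf{G}[\Delta Q]}$, and only then is everything controlled by $A$. That is the alternative approach the paper mentions but does not carry out.

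The paper's own argument (given in the proof of the more general Lemma~\ref{pointwiselemma2}) is different and more direct: rescale to the unit disk via $u_m(\xi)=u(m^{-1/2}\xi)$, set $\Psi_m(\xi)=A|\xi|^2-mQ(m^{-1/2}\xi)$ so that $\Psi_m$ is \emph{subharmonic}, and then invoke the weighted inequality from \cite{hh2},
\[
|v(0)|^2e^{\Psi(0)}\le\bigl[8+12\,|\mathbf{G}[\Delta\Psi](0)|^2\bigr]\int_{\mathbb{D}}|v|^2e^{\Psi}\,\diff A,
\]
together with $|\mathbf{G}[\Delta\Psi_m](0)|\le 2A$. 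This is exactly where the constant $8+48A^2=8+12\cdot(2A)^2$ comes from: it is the square of the Green potential of $\Delta\Psi_m$ at the origin, not anything arising from iterating a crude $e^{\pm A}$ bound. The factor $e^A$ then appears separately from $e^{A|\xi|^2}\le e^A$ on $\mathbb{D}$. The companion inequality $|\bar\partial v(0)|^2e^{\Psi(0)}\le 3\int|v|^2e^{\Psi}\,\diff A$ from \cite{hh2} gives \eqref{pointwise2} the same way. So the $A^2$ is a genuine structural feature of the weighted bianalytic estimate, not a bookkeeping artifact you can hope to recover from an unweighted coefficient computation.
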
 
As an implication, we get two useful estimates for the 
kernel $K_{2,mQ,n}$. 
Namely, taking $u(w):= K_{2,mQ,n}(w,z)$ for some fixed $z$ and using 
\[
\int_{\mathbb{C}} |K_{2,mQ,n}(w,z)|^2e^{-mQ(w)} \diff A(w) = K_{2,mQ,n}(z,z),
\]
we get the following estimate for the one-point function on $\mathcal{S}$:
\begin{equation} \label{diagonalestimate1}
K_{2,mQ,n}(z,z)e^{-mQ(z)} \leq m (8+48A_{\mathcal{S}}^2)e^{A_{\mathcal{S}}}, \qquad z \in \mathcal{S}, \quad m \geq 1,
\end{equation}
where 
\[
A_{\mathcal{S}}:= \sup_{\dist(w, \mathcal{S}) \leq 1} |\Delta Q(w)|.
\]

Combining \eqref{diagonalestimate1} with Cauchy-Schwarz inequality gives
\begin{multline}\label{apriorioffdiagonal}
|K_{2,mQ,n}(z,w)|^2e^{-mQ(z)-mQ(w)} \\
\leq 
K_{2,mQ,n}(z,z)e^{-mQ(z)} 
K_{2,mQ,n}(w,w)e^{-mQ(w)} \\
\leq m^2 (8+48A_{\mathcal{S}}^2)^2e^{2A_{\mathcal{S}}}, \qquad z,w \in \mathcal{S}.
\end{multline}

We now apply the lemma \ref{pointwiselemma} to provide an analogue 
of lemma 3.5 in \cite{ahm1}. We will need a simple weighted maximum principle
for analytic polynomials provided by lemma 3.4 of the same paper 
(see also  theorem III.2.1 in \cite{st}).
The proof is based on using the definition of $\widehat Q$ 
and an application of maximum principle. 
\begin{lem} \label{weightedmaxpri}
Suppose $m \geq 1$ and $n \leq m+1$. 
Let $u$ be an analytic polynomial of degree $\leq n-1$ satisfying
\[ 
|u(z)|^2e^{-mQ(z)} \leq 1, \qquad z \in \mathcal{S}.
\]  
Then 
\[
|u(z)|^2e^{-m\widehat{Q}(z)} \leq 1, \qquad z \in \mathbb{C}. 
\]
\end{lem}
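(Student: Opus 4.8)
The plan is to prove the bound after taking logarithms: writing
\[
v(z) := \log|u(z)|^2 - m\widehat{Q}(z),
\]
it suffices to show $v\le 0$ on all of $\mathbb{C}$, since exponentiating gives the assertion. (If $u\equiv 0$ there is nothing to prove, so assume $u\not\equiv 0$.) The structural facts I would use are: $\log|u|^2$ is subharmonic on $\mathbb{C}$ (logarithm of the modulus of an entire function, with value $-\infty$ at the zeros of $u$); $\widehat{Q}$ is harmonic on $\mathbb{C}\backslash\mathcal{S}$; and $\widehat{Q}=Q$ on $\mathcal{S}$. Hence $v$ is subharmonic on the open set $\mathbb{C}\backslash\mathcal{S}$, while on $\mathcal{S}$ the hypothesis $|u|^2 e^{-mQ}\le 1$ says exactly that $v\le 0$ there, and in particular $v\le 0$ on $\partial\mathcal{S}$. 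So the task reduces to propagating $v\le 0$ from $\partial\mathcal{S}$ into $\mathbb{C}\backslash\mathcal{S}$ by the maximum principle.

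The input that controls the behaviour at infinity is the degree restriction. Since $u$ is a polynomial of degree $\le n-1$, one has $\log|u(z)|^2 \le 2(n-1)\log|z| + \mathrm{O}(1)$ as $|z|\to\infty$, whereas the normalization \eqref{Qhatgrowthcondition} gives $m\widehat{Q}(z) = 2m\log|z| + \mathrm{O}(m)$. Therefore
\[
v(z) \le 2(n-1-m)\log|z| + \mathrm{O}(1),\qquad |z|\to\infty,
\]
and since $n\le m+1$ we have $n-1-m\le 0$, so $v$ is bounded above on $\mathbb{C}\backslash\mathcal{S}$. On any bounded connected component of $\mathbb{C}\backslash\mathcal{S}$ (whose boundary lies in $\partial\mathcal{S}$) the ordinary maximum principle gives $v\le\sup_{\partial\mathcal{S}}v\le 0$. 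On the unbounded component $\Omega_\infty$, boundedness above of the subharmonic function $v$ lets one extend it subharmonically across $\infty$ by setting $v(\infty):=\limsup_{z\to\infty}v(z)$, and then the maximum principle on the proper subdomain $\Omega_\infty\cup\{\infty\}$ of the Riemann sphere — whose complement contains the nonempty set $\mathcal{S}$ — yields $v\le 0$ on $\Omega_\infty$ as well (using that $v$ is upper semicontinuous, so $\limsup_{z\to\zeta}v(z)\le v(\zeta)\le 0$ for $\zeta\in\partial\mathcal{S}$). Combining the two cases gives $v\le 0$ on $\mathbb{C}$.

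I expect the only genuinely delicate point to be this behaviour at infinity in the borderline case $n=m+1$, where $v$ is merely bounded above rather than tending to $-\infty$, so the naive maximum principle on an unbounded domain is not enough and the Phragm\'en--Lindel\"of / removable-singularity-at-$\infty$ step is required. An equivalent, perhaps cleaner, way to package the whole argument is to note that $\tfrac1m\log|u|^2$ is an admissible competitor in the obstacle problem defining $\widehat{Q}$ in a weak sense — it is subharmonic, satisfies the growth bound $\le\log|z|^2+\mathrm{O}(1)$ (here one again uses $n-1\le m$), and is $\le Q$ on $\mathcal{S}$ — and precisely the subharmonicity-off-$\mathcal{S}$ plus maximum-principle reasoning above shows that any such function is automatically $\le\widehat{Q}$ everywhere. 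Everything besides the behaviour at infinity is routine; the statement is the classical weighted maximum principle (cf.\ Lemma~3.4 in \cite{ahm1} and Theorem~III.2.1 in \cite{st}).
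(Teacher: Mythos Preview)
Your proof is correct and follows exactly the approach the paper indicates: the paper does not give a detailed proof but merely states that ``the proof is based on using the definition of $\widehat Q$ and an application of maximum principle'' and refers to Lemma~3.4 of \cite{ahm1} and Theorem~III.2.1 of \cite{st}. Your argument---showing that $v=\log|u|^2-m\widehat Q$ is subharmonic on $\mathbb{C}\setminus\mathcal{S}$, nonpositive on $\mathcal{S}$, and bounded above at infinity thanks to $n-1\le m$, then applying the maximum principle (with the Phragm\'en--Lindel\"of step on the unbounded component)---is precisely the standard one those references contain.
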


\begin{prop} \label{bimaxprinciple}
Suppose $m \geq 1$ and $n \leq m$.
Then, there exists a constant depending only on $Q$ such that
for all $u \in A^2_{2,mQ,n}$ and $z \in \mathbb{C}$, it holds 
\[ |u(z)|^2 \leq Cm^2 \|u\|^2_{mQ}e^{m \widehat Q(z)}, \qquad m \geq 1. \]
\end{prop}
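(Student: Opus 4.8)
The plan is to reduce this polyanalytic estimate to the analytic weighted maximum principle of Lemma~\ref{weightedmaxpri} via the decomposition \eqref{polyanaldecompo}. Write $u = f_0 + \bar z f_1$, where $f_0,f_1$ are analytic polynomials of degree $\le n-1$; concretely $f_1 = \bar\partial u$ and $f_0 = u - \bar z\,\bar\partial u$. Since $u$ is bianalytic, Lemma~\ref{pointwiselemma} applies to $u$, and together with $\int_{\mathbb{D}(z,m^{-1/2})}|u(w)|^2 e^{-mQ(w)}\,\diff A(w) \le \|u\|_{mQ}^2$ it gives, for every $z\in\mathcal S$ and $m\ge1$,
\[
|u(z)|^2 e^{-mQ(z)} \le C_1\,m\,\|u\|_{mQ}^2 , \qquad |f_1(z)|^2 e^{-mQ(z)} \le C_1\,m^2\,\|u\|_{mQ}^2 ,
\]
with $C_1$ depending only on $Q$ through $A_{\mathcal S}$ --- this is the same mechanism that produced \eqref{diagonalestimate1}. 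So far the estimate is controlled only on $\mathcal S$; the work is to propagate it to all of $\mathbb{C}$, which cannot be done directly on $u$ since $u$ itself is not analytic.

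First I would handle $f_0 = u-\bar z f_1$. With $R:=\max_{z\in\mathcal S}|z|$ (a constant depending only on $Q$), the two bounds above give $|f_0(z)|^2 e^{-mQ(z)}\le (2C_1m+2R^2C_1m^2)\|u\|_{mQ}^2 \le C_2\, m^2\|u\|_{mQ}^2$ on $\mathcal S$; since $\deg f_0\le n-1$ and $n\le m+1$, applying Lemma~\ref{weightedmaxpri} to $f_0/(C_2^{1/2}m\|u\|_{mQ})$ yields $|f_0(z)|^2\le C_2\, m^2\|u\|_{mQ}^2\,e^{m\widehat Q(z)}$ for all $z\in\mathbb{C}$. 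For the term $\bar z f_1$ the only subtlety is the extra factor $|z|$, which would ruin the bound for large $|z|$ if carried along naively; the remedy is to absorb it into the polynomial. Set $g(z):=z f_1(z)$, an analytic polynomial of degree $\le n$. On $\mathcal S$ one has $|g(z)|^2 e^{-mQ(z)} = |z|^2|f_1(z)|^2 e^{-mQ(z)} \le R^2 C_1 m^2\|u\|_{mQ}^2$, and since $\deg g\le n=(n+1)-1$ with $n+1\le m+1$ --- here the hypothesis $n\le m$ is exactly what is needed --- Lemma~\ref{weightedmaxpri} (used with $n$ replaced by $n+1$) gives $|z|^2|f_1(z)|^2 = |g(z)|^2 \le C_3\, m^2\|u\|_{mQ}^2\,e^{m\widehat Q(z)}$ for all $z\in\mathbb{C}$.

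Combining the two pieces, for every $z\in\mathbb{C}$,
\[
|u(z)|^2 = |f_0(z)+\bar z f_1(z)|^2 \le 2|f_0(z)|^2 + 2|z|^2|f_1(z)|^2 \le 2(C_2+C_3)\,m^2\|u\|_{mQ}^2\,e^{m\widehat Q(z)},
\]
which is the assertion with $C=2(C_2+C_3)$, depending only on $Q$. The only genuinely delicate point is the handling of the factor $|z|$ coming from the $\bar z$ in the polyanalytic decomposition; it is disposed of above by folding $z$ into the analytic polynomial and spending the one degree of slack that the assumption $n\le m$ leaves in Lemma~\ref{weightedmaxpri}. I do not expect any other real obstacle --- the remaining work is just the routine tracking of the constants $C_1,C_2,C_3$.
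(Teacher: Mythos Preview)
Your argument is correct and follows essentially the same route as the paper: decompose $u$ into its analytic components, use Lemma~\ref{pointwiselemma} to bound them on $\mathcal{S}$, and then propagate via Lemma~\ref{weightedmaxpri}, absorbing the antiholomorphic factor into an analytic polynomial of one higher degree (which is exactly where the hypothesis $n\le m$ is spent). The only cosmetic difference is that the paper centers the decomposition at the circumcenter $z_0$ of $\mathcal{S}$, writing $u=p+(\bar z-\bar z_0)q$, whereas you center at the origin; this affects only the size of the constant.
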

\begin{proof}
We take $z_0 \in \mathbb{C}$ which minimizes the quantity 
\[ \max_{z \in \mathcal{S}} |z- z_0|. \]
Let $c$ be the value of this expression corresponding to the optimal choice of $z_0$.
We write $u(z) = p(z) + (\bar z - \bar z_0) q(z)$ for analytic $p$ and $q$. According to \eqref{pointwise2}, 
\[ |(\overline{z - z_0})q(z)|^2e^{-mQ(z)} \leq 3m^2e^{A_{\mathcal{S}}} c^2\|u\|^2_m, \qquad z \in \mathcal{S}, \]
and 
\begin{multline}
|p(z)|^2e^{-mQ(z)} \leq 2\big[ |u(z)|^2 + |z-z_0|^2 |q(z)|^2 \big]e^{-mQ(z)}
\\
\leq 2\big[ (8+48A_{\mathcal{S}}^2)e^{A_{\mathcal{S}}} m + 3c^2 e^{A_{\mathcal{S}}}m^2 \big] \|u\|_m^2, 
\qquad z \in \mathcal{S},
\end{multline}
where 
\[ 
A_{\mathcal{S}} := \sup_{\dist(z, \mathcal{S}) \leq 1} |\Delta Q(z)|.
\]
We can now apply lemma \ref{weightedmaxpri} to get  
\begin{equation} \label{maxpri1}
 |(\overline{z - z_0})q(z)|^2e^{-m\widehat Q(z)} \leq me^{A_{\mathcal{S}}} c^2\|u\|^2_m, \quad z \in \mathbb{C}
\end{equation}
and 
\begin{equation} \label{maxpri2}
|p(z)|^2e^{-m\widehat Q(z)} \leq
 2\big[ (8+48A_{\mathcal{S}}^2)e^{A_{\mathcal{S}}} m + 3c^2 e^{A_{\mathcal{S}}}m^2 \big] \|u\|_m^2, \quad z \in \mathbb{C}.
\end{equation}
The statement of the proposition follows from putting \eqref{maxpri1} and \eqref{maxpri2} together:
\begin{equation}
|u(z)|^2 e^{-m\widehat{Q}(z)} \leq 2|p(z)|^2e^{-m\widehat{Q}(z)}+2|(z-z_0)|^2|q(z)|^2e^{-m\widehat{Q}(z)} 
\leq Cm^2 \|u\|_{m},
\end{equation}  
where the constant $C$ only depends on $A_{\mathcal{S}}$ and $c$. 
\end{proof}

From this result, we deduce
\begin{multline}
K_{2,mQ,n}(z,z)^2e^{-mQ(z)} \leq Cm^2 \|K_{2,mQ,n}(\cdot, z) \|^2_m e^{-m(Q(z)-\widehat Q(z))},  \qquad n \leq m \\
\end{multline}
and because 
\[
\|K_{2,mQ,n}(\cdot, z) \|_m^2 = K_{2,mQ,n}(z,z),
\]
we get
\begin{equation}\label{onepointest11}
K_{2,mQ,n}(z,z)e^{-mQ(z)} \leq Cm^2 e^{-m(Q(z)-\widehat Q(z))}, \qquad n \leq m.
\end{equation}
for all $z \in \mathbb{C}$. Notice that this estimate is only interesting for $z \in \mathbb{C} \backslash \mathcal{S}$, 
because for $z \in \mathcal{S}$, we already have a better estimate in \eqref{diagonalestimate1}. 
We conclude that the one-point function $K_{2,mQ,n}(z,z)e^{-mQ(z)}$ decays 
exponentially to zero for a fixed $z \in \mathbb{C} \backslash \mathcal{S}$ as $m \to \infty$. Moreover, 
the growth conditions \eqref{Qgrowthcondition} and \eqref{Qhatgrowthcondition} imply  
that for any neighborhood $\mathcal{D}$ of $\mathcal{S}$, we have 
\[
\int_{\mathbb{C} \backslash \mathcal{D}} K_{q,mQ,n}(z,z) e^{-mQ(z)}\diff A(z) \to 0, \qquad m \to \infty, n \leq m.
\]
Finally, we want to record the the following off-diagonal analogue of \eqref{onepointest11}:
\begin{multline}
|K_{2,mQ,n}(w,z)|^2e^{-mQ(w)-mQ(z)}
\leq Cm^2K_{2,mQ,n}(z,z)e^{-mQ(z)}e^{-m(Q(w)-\widehat Q(w))} \\
\leq C^2m^4 e^{-m(Q(z)-\widehat Q(z))}e^{-m(Q(w)-\widehat Q(w))}, \quad n \leq m.
\end{multline}
In particular, for fixed $z$ and $w$, we have again exponential decay to $0$ whenever one of the two points 
is in $\mathbb{C} \backslash \mathcal{S}$. This should be compared with the off-diagonal damping theorem 
\ref{offdiagonalthm}, which deals with the case when both 
$z$ and $w$ belong to $\mathcal{S}$.

\section{H\"ormander-type estimates for $\bar \partial^2$ } \label{hormandersection}
The purpose of this section is to prove theorem \ref{rhoprop}, 
which is an estimate for a solution of the equation $\bar \partial^2 u = f$,
where a certain growth condition is imposed on the solution near infinity.  
This result will be used in section \ref{offdiagonalsection} to prove an off-diagonal decay estimate 
for bianalytic Bergman kernels. 

We will assume 
that $\widehat Q \geq 1$. We can always 
arrange this by adding a sufficiently big constant to $Q$. This just means 
that the corresponding reproducing kernel will be multiplied by a constant and 
so the problem remains essentially unchanged. 
 
We start by fixing  $z_0 \in \mathrm{int} \mathcal{S}\cap \mathcal{N}_+$ and setting 
\begin{gather*}
r_0 = \frac14 \text{dist}\big(z_0, \mathbb{C} \backslash (\mathcal{S} \cap \mathcal{N}_+) \big), \qquad
\alpha = \inf_{z \in \mathbb{D}(z_0, 2r_0)} \Delta Q(z), \\
A =  \sup_{\dist(z, \mathcal{S})\leq 1} |\Delta Q(z)|, \qquad
\beta = \sup_{z \in \mathcal{S}} Q(z),  \\
 l = \inf_{z \in \mathcal{S}} \frac{1}{(1+|z|^2)^2} =  \inf_{z \in \mathcal{S}} \Delta \log (1+|z|^2) . 
\end{gather*}
We have $\alpha > 0$; this is because $Q$ is strictly subharmonic in 
the interior of $\mathcal{S} \cap \mathcal{N}_+$. 
We choose two positive numbers $M_0$ and $M_1$ so that 
\begin{equation} \label{M0andM1}
M_1 \log(1+|z|^2) \leq M_0 \widehat Q(z), \quad z \in \mathbb{C};
\end{equation}
this is possible because of the assumption $\widehat Q \geq 1$ and the growth 
condition 
\[ \widehat Q(z) = \log |z|^2 + \mathrm{O}(1), \quad z \to \infty. \]
Notice that this implies immediately $M_1 \leq M_0$. We will also need 
to assume that $M_1 > 2$. 

We define
\[ 
\phi_m = mQ, \quad \widehat \phi_m(z)= (m-M_0)\widehat Q + M_1 \log(1+|z|^2). 
\] 
The definitions are exactly as in \cite{ahm1}. 
Intuitively, we would like to think $\widehat \phi_m$ to be equal to $m \widehat Q$; 
it however turns out that two correction terms involving the constants 
$M_0$ and $M_1$ are needed. 
The purpose of the logarithmic correction term $M_1 \log(1+|z|^2)$
is 
that now the weight $\widehat \phi_m$ becomes strictly subharmonic in the whole plane: 
\begin{equation} \label{deltahatphi-ineq}
\Delta \widehat \phi_m(z) = (m-M_0)\Delta \widehat Q(z) + M_1(1+|z|^2)^{-2} \geq M_1(1+|z|^2)^{-2},
\end{equation}
for a.e. $z \in \mathbb{C}$ and $m \geq M_0$. 
The effect of substracting the term $M_0 \widehat Q$ is
\begin{equation} \label{hatphi-ineq1}
 \widehat \phi_m \leq \phi_m, \quad z \in \mathbb{C}. 
\end{equation}
We also see that 
\begin{equation} \label{hatphi-ineq2}
\phi_m \leq \widehat \phi_m + M_0 \beta.
\end{equation}
on $\mathcal{S}$. 

We fix $r_1 < r_0$ and suppose that $2m^{-1/2} \leq r_1$. We will work with 
a function $\rho_m$ which is $C^{1,1}$-smooth on $\mathbb{C}$, zero on $\mathbb{D}(z_0, m^{-1/2})$, constant 
on $\mathbb{C} \backslash \mathbb{D}(z_0, r_1-m^{-1/2})$  and 
satisfies the properties
\begin{gather} 
- \frac{m \alpha}{2} + M_0 \alpha \leq \Delta \rho_m \leq mA  \label{rhocond1} \\
|\bar \partial \rho_m|^2 \leq \frac{\alpha}{27\cdot 16 \cdot e^{2A+1+M_0\beta}}\big( \frac12m\alpha+M_1l \big)  \label{rhocond2} \\
|\bar \partial^2 \rho_m + (\bar \partial \rho_m)^2|^2 \leq 
\frac{\big( \frac12 m \alpha+ M_1l \big)\big( \frac14 m \alpha+ M_1l \big)}{9e^{M_0 \beta}} \quad \text{a.e.} \label{rhocond3}
\end{gather}
for $m \geq 1$. The role of these conditions will become clear in the proof of theorem \ref{rhoprop}. 
A consequence of the estimate \eqref{rhocond1} is that 
\begin{equation} \label{laplacephihatrho1}
\Delta (\widehat \phi_m + \rho_m) \geq \frac{m \alpha}{2} + M_1 l
\end{equation}
for a.e. $ z \in \mathbb{D}(z_0, 2r_0)$. We also have
\begin{equation} \label{laplacephihatrho2}
 \Delta (\widehat \phi_m + \rho_m) \geq M_1 \frac{1}{(1+|z|^2)^2} 
\end{equation}
for a.e. $z \in \mathbb{C}$. 

Our argument is based on iterating the elementary one-dimensional 
version of H\"ormander's $\bar \partial$-estimate. 
This result states that for $f \in L^2_{loc}(\mathbb{C})$ and $\psi\in C^{1,1}(\mathbb{C})$ satisfying 
$\Delta \psi > 0 $ a.e., there exists a  solution to the equation 
\begin{equation} \label{horre1}
\bar \partial u = f
\end{equation}
satisfying
\[
\int_{\mathbb{C}}|u|^2 e^{-\psi} \leq \int_{\mathbb{C}} |f|^2\frac{e^{-\psi}}{\Delta \psi}
\]
provided that the right hand side is finite. It is also known 
that there exists a unique norm-minimal solution $u_0$ to \eqref{horre1}. 
Namely, given any solution 
$u_1 \in L^2(e^{-\psi})$ to the equation, $u_0$ can be written as
\[
u_0 = u_1 - \mathrm{P}_{\psi}[u_1],
\]   
where $\mathrm{P_\psi}$ denotes the projection to the subspace of analytic functions 
within $L^2(e^{-\psi})$. 

\begin{thm} \label{bihormander}
Let $f \in L^{\infty}(\mathbb{C})$ be supported on $\mathbb{D}(z_0, r_0)$ and 
let $\rho_m$ satisfy the conditions above.  
Then, there exists a solution $u_2$ to the equation 
\[
\bar \partial^2 u = f
\]
satisfying 
\begin{equation}
\int_{\mathbb{C}} |u_2|^2 e^{-(\widehat \phi_m + \rho_m)} \diff A \\
\leq \frac{1}{\big( \frac12 m \alpha + M_1 l \big) \big( \frac14 m \alpha +M_1l \big)}
\int_{\mathbb{D}(z_0,r_0)} |f|^2 e^{-(\widehat \phi_m + \rho_m)} \diff A
\end{equation}
for all $m \geq m_0$, where $m_0$ is a positive constant depending only on the parameters 
$M_1, l$, $\alpha$ and $r_0$. 
\end{thm}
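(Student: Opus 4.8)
The plan is to solve the equation $\bar\partial^2 u = f$ by splitting it into two successive first-order $\bar\partial$-problems, applying the one-dimensional Hörmander estimate recalled above to each, with carefully chosen weights so that the bookkeeping of the correction terms $\rho_m$ works out. Concretely, I would first solve $\bar\partial v = f$ using Hörmander's estimate with the weight $\psi_1 := \widehat\phi_m + \rho_m$; by \eqref{laplacephihatrho2} (or \eqref{laplacephihatrho1} on the support of $f$, which lies in $\mathbb D(z_0,r_0)$), we have $\Delta\psi_1 \geq \tfrac14 m\alpha + M_1 l$ on $\mathbb D(z_0,r_0)$, so the minimal solution $v$ satisfies
\[
\int_{\mathbb C} |v|^2 e^{-\psi_1}\,\diff A \leq \frac{1}{\tfrac14 m\alpha + M_1 l}\int_{\mathbb D(z_0,r_0)} |f|^2 e^{-\psi_1}\,\diff A.
\]
Then I would solve $\bar\partial u = v$, but now with respect to a \emph{different} weight $\psi_2$, chosen so that (i) $e^{-\psi_2}$ controls $\int |v|^2 e^{-\psi_1}$ from above on the region where $v$ is effectively supported, and (ii) $\Delta\psi_2 \geq \tfrac12 m\alpha + M_1 l$. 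The natural candidate is $\psi_2 := \widehat\phi_m + \rho_m + (\text{something absorbing } \rho_m\text{-discrepancies})$; here is where the conditions \eqref{rhocond1}–\eqref{rhocond3} and \eqref{hatphi-ineq2} enter.

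**The delicate point** is that $v$ need not be supported in $\mathbb D(z_0,r_0)$, so when I estimate $\int |v|^2 e^{-\psi_2}$ I cannot just localize. This is the reason $\rho_m$ is taken constant outside $\mathbb D(z_0, r_1 - m^{-1/2})$ and the logarithmic term $M_1\log(1+|z|^2)$ is built into $\widehat\phi_m$: it gives $\Delta(\widehat\phi_m+\rho_m) \geq M_1(1+|z|^2)^{-2}$ globally, which is exactly enough $L^2$-decay at infinity for the second solve to make sense. I would therefore write $\psi_2 = \psi_1 + \eta_m$ where $\eta_m$ is chosen to be $0$ on a large ball and to grow slowly, using \eqref{M0andM1}, so that $\Delta\psi_2$ retains the lower bound $\tfrac12 m\alpha + M_1 l$ near $z_0$ and $M_1(1+|z|^2)^{-2}$ globally while $e^{\psi_1 - \psi_2} = e^{-\eta_m} \leq 1$. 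Then
\[
\int_{\mathbb C} |u|^2 e^{-\psi_2}\,\diff A \leq \frac{1}{\tfrac12 m\alpha + M_1 l}\int_{\mathbb C}|v|^2 e^{-\psi_2}\,\diff A \leq \frac{1}{\tfrac12 m\alpha + M_1 l}\int_{\mathbb C}|v|^2 e^{-\psi_1}\,\diff A,
\]
and combining the two estimates yields the stated bound with the product $(\tfrac12 m\alpha + M_1 l)(\tfrac14 m\alpha + M_1 l)$ in the denominator, provided $\psi_2 = \widehat\phi_m + \rho_m$ can be arranged on $\mathbb D(z_0,r_0)$, i.e. $\eta_m$ vanishes there — which is fine since $r_0$ is small.

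**The main obstacle** I anticipate is reconciling the weight used in the \emph{conclusion} (which is $\widehat\phi_m + \rho_m$, with no extra correction) with the weight $\psi_2$ forced by the second $\bar\partial$-solve; one must check that $\eta_m$ can be taken to vanish on all of $\mathbb D(z_0,r_0)$ (so the conclusion's left-hand integrand is correct there and the right-hand integrand is literally $|f|^2 e^{-(\widehat\phi_m+\rho_m)}$ since $f$ is supported there) while still being globally subharmonic-friendly. The cleanest route, and the one I would ultimately take, is to avoid introducing $\eta_m$ at all: run \emph{both} solves with the single weight $\psi := \widehat\phi_m + \rho_m$, using for the first solve the strong lower bound $\Delta\psi \geq \tfrac14 m\alpha + M_1 l$ valid on $\mathbb D(z_0, 2r_0) \supset \operatorname{supp} f$ from \eqref{laplacephihatrho1}, and for the second solve the global bound $\Delta\psi \geq M_1(1+|z|^2)^{-2}$ from \eqref{laplacephihatrho2} — except that this only gives a denominator $(\tfrac14 m\alpha + M_1 l)\cdot M_1 l$, not the claimed $(\tfrac12 m\alpha + M_1 l)(\tfrac14 m\alpha + M_1 l)$. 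So the $\tfrac12 m\alpha$ factor genuinely requires that $v$ be shown to be \emph{concentrated} near $z_0$ where the larger bound $\tfrac12 m\alpha + M_1 l$ holds; this concentration is precisely what conditions \eqref{rhocond2}–\eqref{rhocond3} on $\bar\partial\rho_m$ and $\bar\partial^2\rho_m + (\bar\partial\rho_m)^2$ are designed to deliver, presumably via an integration-by-parts / weighted identity showing that the part of $v$ living where $\Delta\psi$ is small contributes negligibly. Making that concentration argument precise — tracking how $\bar\partial\rho_m$ enters when one differentiates $v = \bar\partial u$ and using \eqref{rhocond2}, \eqref{rhocond3}, \eqref{hatphi-ineq2} to absorb the cross terms with the explicit constants $27\cdot 16\cdot e^{2A+1+M_0\beta}$ etc. — is, I expect, the real content of the proof and where most of the work lies; the two Hörmander applications themselves are routine.
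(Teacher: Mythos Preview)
Your overall scaffolding --- two successive H\"ormander solves --- is right, and your instinct to introduce an auxiliary correction to the weight for the intermediate step is also on target. But the proposal contains a genuine gap: you have misidentified the mechanism that produces the factor $\tfrac12 m\alpha + M_1 l$, and in particular the conditions \eqref{rhocond2}, \eqref{rhocond3} on $\bar\partial\rho_m$ and $\bar\partial^2\rho_m+(\bar\partial\rho_m)^2$ play \emph{no role whatsoever} in this theorem. Those conditions are used only later, in Theorem~\ref{rhoprop}; here only \eqref{rhocond1} enters, via \eqref{laplacephihatrho1} and \eqref{laplacephihatrho2}. So your final paragraph, where you expect the ``real content'' to be a concentration argument driven by \eqref{rhocond2}--\eqref{rhocond3}, is pointed in the wrong direction and would not lead to a proof.

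The actual device is closer to your abandoned $\eta_m$ idea, but with the sign reversed. After the first H\"ormander step one has
\[
\int |u_2|^2 e^{-(\widehat\phi_m+\rho_m)} \le \int |u_1|^2\,\frac{e^{-(\widehat\phi_m+\rho_m)}}{\Delta(\widehat\phi_m+\rho_m)}
= \int |u_1|^2\, e^{-[(\widehat\phi_m+\rho_m)+\log\Delta(\widehat\phi_m+\rho_m)]}.
\]
One would like to apply H\"ormander again with the weight $(\widehat\phi_m+\rho_m)+\log\Delta(\widehat\phi_m+\rho_m)$, but this is not $C^{1,1}$. The key construction is a $C^{1,1}$ radial function $\theta_m(|z-z_0|)$ satisfying $\theta_m\le \log\Delta(\widehat\phi_m+\rho_m)$ everywhere, equal to the constant $\log(\tfrac12 m\alpha+M_1 l)$ on $\mathbb D(z_0,r_0)$, and with $\Delta\theta_m\ge -\tfrac14 m\alpha$ in the transition annulus, so that $\widehat\phi_m+\rho_m+\theta_m$ stays strictly subharmonic globally (this is where $M_1>2$ is used). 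Taking $u_1$ to be the minimal solution of $\bar\partial u_1=f$ in $L^2(e^{-(\widehat\phi_m+\rho_m+\theta_m)})$ and using $\theta_m\le\log\Delta(\widehat\phi_m+\rho_m)$ to pass from the displayed integral to the $\theta_m$-weighted one, a second H\"ormander application gives
\[
\int |u_1|^2 e^{-(\widehat\phi_m+\rho_m+\theta_m)} \le \int_{\mathbb D(z_0,r_0)} |f|^2\,\frac{e^{-(\widehat\phi_m+\rho_m+\theta_m)}}{\Delta(\widehat\phi_m+\rho_m+\theta_m)}
\le \frac{1}{(\tfrac12 m\alpha+M_1 l)(\tfrac14 m\alpha+M_1 l)}\int |f|^2 e^{-(\widehat\phi_m+\rho_m)},
\]
since on the support of $f$ one has $e^{-\theta_m}=(\tfrac12 m\alpha+M_1 l)^{-1}$ and $\Delta(\widehat\phi_m+\rho_m+\theta_m)\ge \tfrac14 m\alpha+M_1 l$. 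So the correction is \emph{large} near $z_0$ and decays outward (opposite to your $\eta_m$), and it is this explicit construction of $\theta_m$ --- not any property of $\rho_m$ beyond \eqref{rhocond1} --- that carries the proof. The threshold $m_0$ arises from requiring the transition of $\theta_m$ to fit inside $\mathbb D(z_0,2r_0)\setminus\mathbb D(z_0,r_0)$.
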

\begin{proof}
Let us consider the equation 
\[ \bar \partial u = u_1, \]
where $u_1 \in L^2_{loc}(\mathbb{C})$ is some function to be specified in a moment. 

According to H\"ormander's result and \eqref{laplacephihatrho2},  
there exists a solution $u_2$ to the equation with the norm control 
\begin{multline} \label{bihorre-eq1}
\int_{\mathbb{C}} |u_2|^2 e^{-(\widehat \phi_m + \rho_m)} \diff A \leq 
\int_{\mathbb{C}} |u_1|^2 \frac{e^{-(\widehat \phi_m + \rho_m)}}{\Delta (\widehat \phi_m + \rho_m)} \diff A \\
\leq \int_{\mathbb{C}} |u_1|^2 e^{-[(\widehat \phi_m + \rho_m) + \log \Delta (\widehat \phi_m + \rho_m)]} \diff A.
\end{multline}
We would like to proceed by using H\"ormander's estimate again with the weight 
$
(\widehat \phi_m + \rho_m) + \log \Delta (\widehat \phi_m + \rho_m)
$. But
this is not possible, because the function $\log \Delta (\widehat \phi_m + \rho_m)]$ is not $C^{1,1}$-smooth. 
We proceed by replacing it with another function which is smooth enough. 
Building on \eqref{laplacephihatrho1} and \eqref{laplacephihatrho2}, we estimate
\[
\log \Delta (\widehat \phi_m + \rho_m) \geq \theta_m(|z-z_0|),  
\]
where $\theta_m$ is any $C^{1,1}$-smooth function satisfying
\[
\theta_m(x) \leq \log \big( \frac12 \alpha m + M_1 l \big), \qquad 0 < x \leq 2r_0,
\] 
and 
\[
\theta_m(x) \leq \log M_1 - 2 \log \big(1+|z|^2 \big)
\]
for $x \geq 2r_0$. 
It will be convenient to make the change of variables $x= e^t$, so that we can rewrite
\[ 
\Delta \theta_m(|z-z_0|) = \frac{1}{4|z-z_0|^2}\frac{\diff^2 }{\diff t^2} \theta_m(e^t)_{\mid t= \log |z-z_0|}.
\]
We define another function by $\sigma_m(t):= \theta_m(e^t)$, and set 
\[\sigma_m(t) = \log \bigg[ \frac12 m \alpha+M_1l \bigg],  \quad -\infty < t \leq \log r_0, \]
and 
\[ 
\sigma_m(t) = \log M_1 -2 \log(1+ e^{2t}),  \quad t \geq \log 2 + \log r_0.
\]
It remains to define $\sigma_m$ on the interval $[\log r_0, \log r_0 + \log 2]$. We set 
\[
\sigma_m(t) := \log \bigg[ \frac12 m \alpha+M_1l \bigg]- \frac{m \alpha r_0^2}{2} (t-\log r_0)^2, 
\qquad \log r_0 \leq t \leq t_m,
\]
where 
\[
t_m := \log r_0 +\frac{\sqrt{2}}{\sqrt{m \alpha}r_0}
\sqrt{\log \bigg[ \frac{m\alpha}{2M_1}+l \bigg]}.
\]
The motivation for the definition of  $t_m$ is that 
it solves the equation $\sigma(t_m)= \log M_1$.
We are here assuming that $m \geq m_0$, where $m_0$ 
is so large that 
\[
\log \bigg[ \frac{m\alpha}{2M_1}+l \bigg] > 0 
\]
and  
$t_m \leq \log r_0 + \frac12 \log 2$ for all such $m$.  Notice that this definition
of $\sigma_m$ implies that on the annulus 
$\mathbb{D}(z_0, t_m) \backslash \mathbb{D}(z_0, r_0)$ we have
\[
\Delta \theta_m(|z-z_0|) = -\frac{m\alpha r_0^2}{4|z-z_0|^2} \geq -\frac{m \alpha}{4}.
\]

For the remaining interval $[t_m, \log2 + \log r_0]$, the choice of $\sigma_m$ 
is rather insignifigant. We can namely choose any decreasing function so that 
the resulting function 
defined on the whole real line becomes     
$C^{1,1}$  and whose second derivative 
is not smaller than $-\alpha r_0^2m$. 

With this construction of $\sigma_m$, we now see 
from \eqref{laplacephihatrho1} and 
\eqref{laplacephihatrho2} that
\[
\Delta \big( \widehat \phi_m(z)+ \rho_m(z) + \theta_m(|z-z_0|)\big) 
\geq \frac14 m \alpha + M_1l,
\]
for $z \in \mathbb{D}(z_0, 2r_0)$ and
\[
\Delta \big( \widehat \phi_m(z)+ \rho_m(z) + \theta_m(|z-z_0|)\big) 
\geq M_1 \big(1+|z|^2)^{-2} -2\big(1+|z|^2)^{-2}
\]
for $z \in \mathbb{C} \backslash \mathbb{D}(z_0,2r_0)$. 
In any case, $\widehat \phi_m + \rho_m + \theta_m(|z-z_0|)$ 
is strictly subharmonic a.e.  in the whole complex plane (we use here the assumption $M_1 > 2$ 
from the beginning of this section). 

We can now apply H\"ormander's estimate the second time. We take
$u_1$ to be thel solution of 
\[
\bar \partial u = f
\]
which is norm-minimal in $L^2(e^{-(\widehat \phi_m + \rho_m + \theta_m(|z-z_0|))})$. 
We can now continue from \eqref{bihorre-eq1}:
\begin{multline}
\int_{\mathbb{C}} |u_1(z)|^2 e^{-( \widehat \phi_m + \rho_m + \log \Delta (\widehat \phi_m + \rho_m))} \diff A(z) 
\leq \int_{\mathbb{C}} |u_1(z)|^2 e^{-(\widehat \phi_m + \rho_m + \theta_m(|z-z_0|))} \diff A(z)\\
\leq \int_{\mathbb{D}(z_0,r_0)} |f(z)|^2 \frac{e^{-(\widehat \phi_m(z) + \rho_m(z) + \theta_m(|z-z_0|))}}
{\Delta \big( \widehat \phi_m(z)+ \rho_m(z) + \theta_m(|z-z_0|)\big)} \diff A(z) \\
\leq \frac{1}{\big( \frac12 m \alpha + M_1 l \big) \big( \frac14 m \alpha +M_1l \big)}
\int_{\mathbb{D}(z_0,r_0)} |f(z)|^2 e^{-(\widehat \phi_m(z) + \rho_m(z))} \diff A(z),
\end{multline}
and the proof is complete.
\end{proof}

We will now prove a generalization of lemma \ref{pointwiselemma}. 
\begin{lem} \label{pointwiselemma2}
Take $m > 0$, $z \in \mathbb{C}$ and let $\psi_m$ be a $C^{1,1}$-smooth real-valued function on 
$\mathbb{C}$. 
Then, for any bianalytic function $u$, 
we have 
\begin{equation} \label{pointwise21}
|u(z)|^2e^{-\psi_m(z)} \leq m(8+48A^2)e^{A_m} \int_{\mathbb{D}(z, m^{-1/2})} |u(w)|^2 e^{-\psi_m(w)} \diff A(w)
\end{equation}
and 
\begin{equation} \label{pointwise22}
|\bar \partial u(z)|^2e^{-\psi_m(z)} \leq 3m^2e^{A_m} \int_{\mathbb{D}(z, m^{-1/2})} |u(w)|^2 e^{-\psi_m(w)} \diff A(w),
\end{equation}
where 
\[
A_m:= \frac{1}{m} \esssup_{w \in \mathbb{D}(z, m^{-1/2})} \big| \Delta \psi_m(w) \big|
\]
\end{lem}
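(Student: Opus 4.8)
The plan is to obtain both estimates from Proposition~4.1 of \cite{hh2}, applied on the disk $\mathbb{D}(z,m^{-1/2})$ with the weight $\psi_m$. In that proposition the dependence of the constants on the weight enters only through the scale-invariant quantity $R^2\,\esssup_{\mathbb{D}(\zeta,R)}|\Delta\psi|$, where $R$ is the radius; taking $R=m^{-1/2}$ makes this quantity equal to $A_m$, and specialising to $q=2$ then produces precisely the factors $8+48A_m^2$, $3$ and $\e^{A_m}$. Since only $C^{1,1}$-regularity of the weight is required there (no second-order Taylor expansion of $\psi_m$ is ever used), the hypotheses are met.

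For completeness I recall the mechanism behind that proposition in the bianalytic case. Put $B_m:=\esssup_{\mathbb{D}(z,m^{-1/2})}|\Delta\psi_m|$, so $B_m=mA_m$, and write $u(w)=f(w)+(\bar w-\bar z)\,\bar\partial u(w)$ on $\mathbb{D}(z,m^{-1/2})$ with $f$ holomorphic and $f(z)=u(z)$. Two ingredients are needed. First, a weighted sub-mean value inequality: for holomorphic $h$ the function $w\mapsto |h(w)|^2\exp\!\big(-\psi_m(w)+B_m|w-z|^2\big)$ is subharmonic on $\mathbb{D}(z,m^{-1/2})$, because $\log|h|^2$ is subharmonic and $\Delta\big(B_m|w-z|^2\big)=B_m\ge\Delta\psi_m$ a.e.; the ordinary area sub-mean value property together with $B_m|w-z|^2\le A_m$ therefore gives
\[
|h(z)|^2\e^{-\psi_m(z)}\;\le\; m\,\e^{A_m}\int_{\mathbb{D}(z,m^{-1/2})}|h|^2\e^{-\psi_m}\,\diff A .
\]
Second, a Cauchy-type identity recovering $\bar\partial u$ from $u$: on the circle $|w-z|=\rho$ one has $\bar w-\bar z=\rho^2/(w-z)$, so $\bar\partial u(w)=(w-z)\rho^{-2}\big(u(w)-f(w)\big)$ there, and since the contour integral of the holomorphic function $f$ vanishes,
\[
\bar\partial u(z)\;=\;\frac{1}{2\pi\rho}\int_0^{2\pi}u\big(z+\rho\,\e^{\iu\th}\big)\,\e^{\iu\th}\,\diff\th .
\]
Cauchy--Schwarz and averaging $\rho$ over the annulus $\tfrac12m^{-1/2}<\rho<m^{-1/2}$ give $|\bar\partial u(z)|^2\lesssim m^2\!\int_{\mathbb{D}(z,m^{-1/2})}|u|^2\diff A$ in the unweighted case; applying the first ingredient to $\bar\partial u$ and reinserting the weight upgrades this to \eqref{pointwise22}.

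Inequality \eqref{pointwise21} then follows by applying the first ingredient to $f(w)=u(w)-(\bar w-\bar z)\,\bar\partial u(w)$: since $|\bar w-\bar z|\le m^{-1/2}$ on the disk,
\[
\int_{\mathbb{D}(z,m^{-1/2})}|f|^2\e^{-\psi_m}\diff A\;\le\;2\int_{\mathbb{D}(z,m^{-1/2})}|u|^2\e^{-\psi_m}\diff A+\frac{2}{m}\int_{\mathbb{D}(z,m^{-1/2})}|\bar\partial u|^2\e^{-\psi_m}\diff A ,
\]
and a pointwise combination of the two ingredients on a slightly smaller disk bounds the second integral by $Cm\,\e^{A_m}\int|u|^2\e^{-\psi_m}\diff A$; feeding this back yields a constant of the shape $m\,\e^{A_m}\big(c+c'A_m^2\big)$, the quadratic term in $A_m$ (traceable to the $|w-z|^2(\bar\partial u)$ contribution, compare the leading coefficient $-|z-w|^2b^2$ in \eqref{b22}) being the origin of the summand $48A_m^2$. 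The only genuinely delicate point is this last one: choosing the annulus radii and propagating the weight so that the numerical constants come out exactly as $8+48A_m^2$ and $3$; this book-keeping is the content of the proof of Proposition~4.1 in \cite{hh2}, to which I defer for the sharp values. The same argument, with the decomposition $u=\sum_{j=0}^{q-1}(\bar w-\bar z)^j f_j$, covers a general polyanalytic $u$.
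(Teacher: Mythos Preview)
Your proposal and the paper's proof both ultimately defer to results from \cite{hh2}, so they are the same in spirit. The execution differs, however, and the paper's reduction is tighter.

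The paper does not invoke a scale-invariant version of Proposition~4.1 of \cite{hh2} directly on $\mathbb{D}(z,m^{-1/2})$. Instead it rescales to the unit disk via $u_m(\xi)=u(m^{-1/2}\xi)$ and introduces the auxiliary weight $\Psi_m(\xi)=A_m|\xi|^2-\psi_m(m^{-1/2}\xi)$, which is subharmonic on $\mathbb{D}$ because $\Delta_\xi\Psi_m=A_m-m^{-1}\Delta\psi_m(m^{-1/2}\xi)\ge0$. The result quoted from \cite{hh2} is stated on the unit disk with constant $8+12\,|\mathbf{G}[\Delta\Psi](0)|^2$, where $\mathbf{G}$ is the Green potential; the crucial computation is then $|\mathbf{G}[\Delta\Psi_m](0)|\le 2A_m$, which turns $12|\mathbf{G}[\cdot]|^2$ into $48A_m^2$. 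The factor $\e^{A_m}$ appears simply from $\e^{\Psi_m(\xi)}\le \e^{A_m}\e^{-\psi_m(m^{-1/2}\xi)}$ on $\mathbb{D}$. The second inequality is obtained the same way from the companion estimate $|\bar\partial v(0)|^2\e^{\Psi(0)}\le 3\int_{\mathbb{D}}|v|^2\e^{\Psi}\,\diff A$ in \cite{hh2}.

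Your sketch of the underlying mechanism (holomorphic sub-mean value, the Fourier identity $\bar\partial u(z)=\frac{1}{2\pi\rho}\int_0^{2\pi}u(z+\rho\e^{\iu\theta})\e^{\iu\theta}\diff\theta$, annular averaging) is correct and gives the right orders of magnitude, but it does not by itself yield the stated constants $8+48A_m^2$ and $3$; for those you in any case fall back on \cite{hh2}, as you acknowledge. So the detour through the mechanism is expository rather than logically necessary. If you want a self-contained argument at the level of the paper, the cleanest route is the rescaling-plus-Green-potential step above; your ``scale-invariant quantity $R^2\esssup|\Delta\psi|$'' is exactly what drops out of that rescaling, but you should make the Green-potential bound explicit rather than asserting the form of the constant.
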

\begin{proof}
We assume $z=0$ without loss of generality. 
In \cite{hh2}, it was proved that for a bianalytic function $v$ on $\mathbb{D}$ 
and a subharmonic function $\Psi \in C^{1,1}(\mathbb{D})$ satisfying 
\[
\int_{\mathbb{D}} (1-|w|^2) \Delta \Psi(w) \diff A(w),
\]
it holds 
\begin{equation} \label{subharmonic1} 
|v(0)|^2e^{\Psi(0)} \leq \bigg[8 + 12 |\mathbf{G}[\Delta \Psi](0)|^2 \bigg] \int_{\mathbb{D}} |v(w)|^2 e^{\Psi(w)} \diff A(w), 
\end{equation}
where $\mathbf{G}[\mu]$ denotes the Green's potential of the measure $\mu$: 
\[ 
\mathbf{G}[\mu](z):= \int_{\mathbb{D}} \log \bigg| \frac{z-w}{1- z \bar w} \bigg|^2 \diff \mu(w).
\]
We will set $u_m(\xi)= u(m^{-1/2}\xi)$ and 
$\Psi_m(\xi) = A_m|\xi|^2 -  \psi_m(m^{-1/2} \xi)$ for $\xi \in \mathbb{D}$. We have  $\Delta \Psi_m \geq 0$ 
and 
\[
\big|\mathbf{G}[\Delta \Psi_m](0)\big| \leq 2A_m.
\]
An application of \eqref{subharmonic1} gives 
\begin{multline} 
|u(0)|^2e^{-\psi_m(0)} = |u_m(0)|^2e^{\Psi_m(0)} \leq 
\bigg[8 + 12 \big|\mathbf{G}[\Delta \Psi_m](0) \big|^2 \bigg] 
\int_{\mathbb{D}} |u_m|^2 e^{\Psi_m} \diff A 
\\ 
\leq (8 + 48 A_m^2) e^{A_m} 
\int_{\mathbb{D}} |u_m(\xi)|^2 e^{-\psi_m(m^{-1/2}\xi)} \diff A(\xi)\\
= m (8+48A_m^2)e^{A_m} \int_{\mathbb{D}(0,m^{-1/2})} |u(w)|^2 e^{-\psi_m(w)} \diff A(w),
\end{multline}
which proves the first inequality. The second inequality follows the same way from  
\begin{equation} \label{subharmonic2} 
|\bar \partial v(0)|^2e^{\Psi(0)} \leq 3 \int_{\mathbb{D}} |v(w)|^2 e^{\Psi(w)} \diff A(w),
\end{equation}
which also was proved in \cite{hh2}. 
\end{proof}

In the following lemma and later, we will use the notation 
$]x[$ for the largest integer smaller than or equal to a real number $x$.
\begin{lem} \label{bianalpol}
Let $f$ and $g$ be analytic functions, and suppose that 
\[
u(z):= f(z)+\bar z g(z) \in L^2\big( e^{-\widehat \phi_m} \big), \qquad m \geq M_0. \]
Then,  
$f$ and $g$ are polynomials of degree $\leq ]m-M_0+M_1+1[$.
\end{lem}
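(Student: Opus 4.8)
The plan is to reduce the weighted $L^2$ membership to a genuine power-weight bound, and then to read off the degrees of $f$ and $g$ from the angular Fourier expansion of $u$. First I would record the elementary weight comparison: since $\widehat Q(z)=\log|z|^2+\mathrm{O}(1)$ as $|z|\to\infty$ and $\widehat Q$ is continuous, there is a constant $C=C(Q)>0$ with $\widehat Q(z)\le\log(1+|z|^2)+C$ on $\mathbb{C}$. Writing $s:=m-M_0+M_1$, which satisfies $s\ge M_1>2$ since $m\ge M_0$, this gives $\widehat\phi_m(z)\le s\log(1+|z|^2)+(m-M_0)C$, hence $e^{-\widehat\phi_m(z)}\ge c_m(1+|z|^2)^{-s}$ for some $c_m>0$, and therefore $u\in L^2(e^{-\widehat\phi_m})$ forces
\[
\int_{\mathbb C}|u(z)|^2(1+|z|^2)^{-s}\,\diff A(z)<\infty.
\]

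Next I would expand the entire functions as $f(z)=\sum_{k\ge0}a_kz^k$ and $g(z)=\sum_{k\ge0}b_kz^k$, write $z=re^{i\theta}$, and collect powers of $e^{i\theta}$: the angular Fourier coefficients of $u=f+\bar z g$ are $b_0r$ at frequency $-1$, $a_nr^n+b_{n+1}r^{n+2}$ at each frequency $n\ge0$, and zero otherwise. By Parseval in $\theta$ and Tonelli in $r$, the finiteness of the integral above is equivalent to finiteness of $|b_0|^2\int_0^\infty r^3(1+r^2)^{-s}\,\diff r$ together with $\int_0^\infty|a_nr^n+b_{n+1}r^{n+2}|^2(1+r^2)^{-s}\,r\,\diff r<\infty$ for every $n\ge0$; the first holds automatically because $s>2$.

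The degree bounds then come from the large-$r$ asymptotics of the $n$-th integral. If $b_{n+1}\neq 0$ the integrand is asymptotic to a positive constant times $r^{2n+5-2s}$, so convergence forces $2n+5-2s<-1$, i.e.\ $n+3<s$; thus $b_{n+1}\neq 0$ only when $n+1<s-2$, which gives $\deg g\le\,]s-2[$. If $b_{n+1}=0$ but $a_n\neq 0$ the integrand is asymptotic to a constant times $r^{2n+1-2s}$, forcing $n<s-1$; and if $a_n\neq 0$ with $b_{n+1}\neq 0$ the previous case already gives $n<s-3$. Hence $a_n\neq 0$ only when $n<s-1$, so $\deg f\le\,]s-1[$. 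Since $]s-2[\le\,]s-1[\le\,]s+1[\,=\,]m-M_0+M_1+1[$, both $f$ and $g$ are polynomials of degree at most $]m-M_0+M_1+1[$, as claimed.

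I expect the only mildly delicate point to be keeping the exponent arithmetic and the floor functions consistent; everything else is a routine combination of the weight comparison, Parseval's identity, and convergence of Beta-type integrals. One could alternatively deduce $g=\bar\partial u\in L^2(e^{-\widehat\phi_m})$ by integrating the pointwise estimate of Lemma~\ref{pointwiselemma2} over $\mathbb{C}$ and applying Fubini, but this gives a weaker bound on $\deg g$ and still requires a separate argument for $f$, so the direct Fourier computation seems cleaner.
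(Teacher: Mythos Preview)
Your argument is correct and genuinely different from the paper's. The paper never passes to a power weight or uses an angular Fourier decomposition; instead it applies the pointwise sub-mean-value inequality of Lemma~\ref{pointwiselemma2} with $\psi_m=\widehat\phi_m$ to obtain the uniform bound $|g(z)|^2\le C\,e^{\widehat\phi_m(z)}$, reads off from the growth $\widehat\phi_m(z)=(m-M_0+M_1)\log|z|^2+\mathrm{O}(1)$ that $g$ is a polynomial of degree $\le\,]m-M_0+M_1[$, and then handles $f$ via $|f|^2\le 2(|u|^2+|zg|^2)$ together with a second application of the pointwise estimate to $u$. Your route is more elementary in that it avoids Lemma~\ref{pointwiselemma2} entirely and relies only on the weight comparison $\widehat Q\le\log(1+|z|^2)+C$, Parseval on circles, and convergence of Beta-type integrals; it also yields the sharper bounds $\deg g\le\,]s-2[$ and $\deg f\le\,]s-1[$ (with $s=m-M_0+M_1$), whereas the paper only obtains $]s[$ and $]s+1[$ respectively. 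The paper's approach, on the other hand, fits naturally into the surrounding machinery, since Lemma~\ref{pointwiselemma2} is needed elsewhere anyway, and it generalises more transparently to $q>2$ without having to track the mixing of Fourier modes in $\sum_j \bar z^j f_j(z)$.
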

\begin{proof}
From lemma \ref{pointwiselemma2}, we get
\[
|g(z)|^2 e^{-\widehat \phi_m(z)} \leq 3m^2e^{A_{z,m}} \int_{\mathbb{D}(z, m^{-1/2})}
|u(w)|e^{-\widehat \phi_m(w)} \diff A(w),
\]
where 
\[
A_{z,m} := \frac1m \esssup_{w \in \mathbb{D}(z, m^{-1/2})} \big| \Delta \widehat \phi_m(w)|.
\]
Using \eqref{deltahatphi-ineq}, we see that 
\begin{equation} \label{Azm}
A_{z,m} \leq K+1, \qquad z \in \mathbb{C}, \quad m \geq M_0,
\end{equation}
where 
\[
K: = \esssup_{z \in \mathcal{S}} \Delta \widehat Q(z) = \esssup_{z \in \mathbb{C}} \Delta \widehat Q(z).
\]
The last equality followed from harmonicity of $\widehat Q$ in $\mathbb{C} \backslash \mathcal{S}$. 
By \eqref{Azm}, we have 
\[ |g(z)|^2 \leq 3m^2e^{K+1} \|u\|_{L^2(e^{-\widehat \phi_m})}^2e^{\widehat \phi_m(z)}. \]
for all $z \in \mathbb{C}$. 
This, together with the growth condition
\begin{equation} \label{phihatgrowth}
\widehat \phi_m(z) = (m-M_0+M_1) \log|z|^2 + \mathrm{O}(1) 
\end{equation}
shows that $g$ is a polynomial of degree $\leq  ]m-M_0+M_1[$. 
For $f$, we first estimate
\[ |f(z)|^2
\leq 2(|u(z)|^2+ |zg(z)|^2). \]
We use \eqref{pointwise21} with $\psi_m= \widehat \phi_m$ for the first term:
\[
|u(z)|^2 \leq m[8+48(K+1)^2]e^{K+1} \|u\|^2_{L^2(e^{-\widehat \phi_m})} \e^{\widehat \phi_m(z)}.
\] 
We get 
\begin{multline}
|f(z)|^2 \leq 2(|u(z)|^2+ |zg(z)|^2) \\ 
\leq 
2m[8+48(K+1)^2]e^{K+1} \|u\|^2_{L^2(e^{-\widehat \phi_m})} \e^{\widehat \phi_m(z)}
+ 2|zg(z)|^2.
\end{multline}
The desired assertion for $f$ now follows from the growth condition \eqref{phihatgrowth} and 
that $zg(z)$ is a polynomial of degree $\leq  ]m-M_0+M_1+1[$
\end{proof}

We now turn to a definition of certain 
subspaces of $L^2(e^{-\psi})$, where $\psi$ is some real-valued $C^{1,1}$-smooth 
weight function on 
$\mathbb{C}$. We assume that $\int_{\mathbb{C}} e^{-\psi} \diff A < \infty$ so that 
all constant functions belong to the space. Let us first fix some notation: 
\[
A^2_{2,\psi} := \{ f \mid f \in L^2(e^{-\psi}), \bar \partial^2 f=0 \}
\]
and 
\[
A^2_{2,\psi, n} := L^2(e^{-\psi}) \cap \mathrm{Pol}_{2,n}.
\]
The spaces in question are
\begin{multline}
 L^2_{2, \psi, n} := \{ f \in L^2(e^{-\psi})  
\mid f \in C^1(\mathbb{C} \backslash K) \text{ for some compact set } K, \\
|f(z)-\bar z \bar \partial f(z)| =\mathrm{O}(|z|^{n-1}), \bar \partial f(z) = \mathrm{O}(|z|^{n-1}) 
\text{ as } z \to \infty \}. 
\end{multline}
We see immediately that these spaces are not in general closed in
$L^2(e^{-\psi})$. 
An important fact however is that 
\begin{equation} \label{growth-charac}
A^2_{2, \psi, n} = L^2_{2, \psi, n} \cap \mathrm{A}^2_{2, \psi},
\end{equation}
and $A^2_{2,\psi, n}$ is always closed in $L^2(e^{-\psi})$. 
This makes it possible to speak about $L^2_{2,\psi, n}$-minimal solutions 
to $\bar \partial^2$-equations. 

We will make use of the fundamental solution $\frac{\bar z}{z}$ of the operator $\bar \partial^2$ 
and denote the associated solution operator by $\mathrm{C}_2$:
\[ \mathrm{C}_2[f](z) = \int_{\mathbb{C}} f(w) \frac{\overline{z-w}}{z-w} \diff A(w) \]
for a compactly supported $L^{\infty}$-function $f$. 
We also write $\mathrm{P}_{2, \psi, n}$
for the orthogonal projection from $L^2(e^{-\psi})$ to $A^2_{2,\psi,n}$.

Because $\mathrm{C}_2[f] \in L^2_{2,\psi, n}$, we have by \eqref{growth-charac} that 
\[
u_n := \mathrm{C}_2[f]- \mathrm{P}_{2, \psi, n}\big[ \mathrm{C}_2[f] \big]
\]
is the unique $L^2_{2,\psi,n}$-minimal solution to the equation 
\[
\bar \partial^2 u = f.
\]
This solution is characterized among the solutions in the class $L^2_{2,\psi,n}$ by the condition
\[
\int_{\mathbb{C}} u_n(z) \overline{h(z)} e^{-\psi} \diff A(z) = 0 
\]
which should hold for all $h \in A^2_{2,\psi,n}$. 

The following lemma is an adaption of lemma 4.2 in \cite{ahm1} to our bianalytic context. 
\begin{lem} \label{normminsollemma}
For any compactly supported function $f \in L^{\infty}(\mathbb{C})$, the equation 
\[ \bar \partial^2 u = f \]
has a solution in $L^2\big(e^{-(\widehat \phi_m + \rho_m)}\big)$. 
The $L^2\big(e^{-(\widehat \phi_m+\rho_m)}\big)$-minimal solution is 
of class $L^2_{2,\phi_m, n}$ when $n \geq ]m-M_0+M_1+1[$. 
\end{lem}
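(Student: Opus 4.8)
The plan is to produce the minimal solution explicitly and then read off its behaviour at infinity; throughout, $m$ is taken in the range where $\rho_m$ is defined and $m\geq M_0$. The operator $\mathrm{C}_2$ attached to the fundamental solution $\bar z/z$ of $\bar\partial^2$ already gives a solution $\mathrm{C}_2[f]$ of $\bar\partial^2 u=f$; since $|\overline{z-w}/(z-w)|=1$ off the diagonal, $\mathrm{C}_2[f]$ is bounded, and as the weight $e^{-(\widehat\phi_m+\rho_m)}$ is integrable (it has logarithmic growth at infinity and $\rho_m$ is bounded) we get $\mathrm{C}_2[f]\in L^2\big(e^{-(\widehat\phi_m+\rho_m)}\big)$, which settles the existence statement. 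The subspace $A^2_{2,\widehat\phi_m+\rho_m}$ is closed in $L^2\big(e^{-(\widehat\phi_m+\rho_m)}\big)$ — the usual consequence of the pointwise inequality \eqref{pointwise21} (applied with $\psi_m=\widehat\phi_m+\rho_m$), which forces $L^2$-convergence of bianalytic functions to be locally uniform, with bianalytic limit — so the norm-minimal solution exists and equals
\[
u_m=\mathrm{C}_2[f]-\mathrm{P}_{2,\widehat\phi_m+\rho_m}\big[\mathrm{C}_2[f]\big],
\]
where $\mathrm{P}_{2,\widehat\phi_m+\rho_m}$ denotes the orthogonal projection onto $A^2_{2,\widehat\phi_m+\rho_m}$. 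It is $u_m$ that I would analyse.

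First I would check that $\mathrm{C}_2[f]\in L^2_{2,\phi_m,n}$ for every $n\geq1$. Away from $\mathrm{supp}\,f$ the function $\mathrm{C}_2[f]$ is bianalytic, hence $C^\infty$, and it lies in $L^2(e^{-\phi_m})=L^2(e^{-mQ})$ since it is bounded and $e^{-mQ}$ is integrable. For the behaviour at infinity I would expand $\overline{z-w}/(z-w)=\bar z\sum_{k\geq0}w^{k}z^{-k-1}-\sum_{k\geq0}\bar w\,w^{k}z^{-k-1}$ for $|z|$ larger than $\max_{w\in\mathrm{supp}\,f}|w|$; integrating against $f$ gives $\mathrm{C}_2[f]=\bar z\,G-H$ in that region, with $G,H$ analytic and $G,H=\mathrm{O}(1/|z|)$, so that $\bar\partial\,\mathrm{C}_2[f]=G=\mathrm{O}(1/|z|)$ and $\mathrm{C}_2[f]-\bar z\,\bar\partial\,\mathrm{C}_2[f]=-H=\mathrm{O}(1/|z|)$. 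Hence both growth conditions in the definition of $L^2_{2,\phi_m,n}$ hold with room to spare.

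The heart of the matter is the inclusion $A^2_{2,\widehat\phi_m+\rho_m}\subseteq L^2_{2,\phi_m,n}$ for $n\geq\,]m-M_0+M_1+1[$. Since $\rho_m$ is bounded, $L^2\big(e^{-(\widehat\phi_m+\rho_m)}\big)=L^2(e^{-\widehat\phi_m})$ with equivalent norms, so any $v$ in this space has the form $v_0+\bar z v_1$ with $v_0,v_1$ entire, and by Lemma \ref{bianalpol} (or directly from \eqref{pointwise21} together with the growth $\widehat\phi_m(z)=(m-M_0+M_1)\log|z|^2+\mathrm{O}(1)$) the functions $v_0,v_1$ are polynomials. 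The exact threshold comes from a careful degree count: inspecting the angular Fourier modes of $v_0+\bar z v_1$ at large radii, a monomial of degree $\geq\,]m-M_0+M_1+1[$ in $v_0$ or $v_1$ would produce a tail that is not $e^{-\widehat\phi_m}$-integrable — the weight decays like $|z|^{-2(m-M_0+M_1)}$, too slowly to absorb it — and which cannot be cancelled by the remaining terms, as they sit in other modes; hence $\deg v_0,\deg v_1\leq\,]m-M_0+M_1+1[-1$, i.e.\ $v\in\mathrm{Pol}_{2,\,]m-M_0+M_1+1[}\subseteq\mathrm{Pol}_{2,n}$ for $n$ in the stated range. Being a polynomial lying in $L^2(e^{-\widehat\phi_m})\subseteq L^2(e^{-\phi_m})$ (here $\widehat\phi_m\leq\phi_m$ by \eqref{hatphi-ineq1}), such $v$ trivially meets the remaining clauses in the definition of $L^2_{2,\phi_m,n}$.

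Finally, $L^2_{2,\phi_m,n}$ is a linear subspace of $L^2(e^{-\phi_m})$, closure under subtraction being immediate from the linearity of $\bar\partial$; combining the last two paragraphs then gives $u_m\in L^2_{2,\phi_m,n}$ for $n\geq\,]m-M_0+M_1+1[$, which is the assertion. I expect the only genuinely delicate step to be the sharp degree count in the third paragraph, i.e.\ the exclusion of the borderline degree $]m-M_0+M_1+1[$; the rest — boundedness of $\mathrm{C}_2[f]$, integrability of the weights, closedness of $A^2_{2,\widehat\phi_m+\rho_m}$, and the identification of $u_m$ — is routine.
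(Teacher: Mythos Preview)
Your argument follows the paper's almost exactly: exhibit $\mathrm{C}_2[f]$ as a bounded solution in $L^2(e^{-(\widehat\phi_m+\rho_m)})$, write the minimal solution as $\mathrm{C}_2[f]$ minus its orthogonal projection onto $A^2_{2,\widehat\phi_m+\rho_m}$, identify that projection with a bianalytic polynomial of controlled degree (the paper simply invokes Lemma~\ref{bianalpol} here, using that $\rho_m$ is constant near infinity so $A^2_{2,\widehat\phi_m+\rho_m}=A^2_{2,\widehat\phi_m}$ as sets), and check the decay of $\mathrm{C}_2[f]$ at infinity.

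One small correction to your mode argument: the claim that a high-degree monomial in $v_0$ ``cannot be cancelled by the remaining terms, as they sit in other modes'' is not literally true, since $z^j$ in $v_0$ and $\bar z\,z^{j+1}$ in $\bar z v_1$ both live in the angular mode $e^{ij\theta}$. What actually saves the argument is that within that mode the radial profiles are $a_j r^{j}$ and $b_{j+1}r^{j+2}$, so the dominant term at infinity cannot be cancelled; integrability against $r^{-2(m-M_0+M_1)}$ then kills the top coefficients one at a time. With that adjustment your degree count goes through (and is in fact sharper than what the pointwise estimate of Lemma~\ref{bianalpol} yields).
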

\begin{proof}
The function $\mathrm{C}_2[f]$ solves the equation and satisfies $\mathrm{C}_2[f] = \mathrm{O}(1)$ 
as $z \to \infty$. Therefore, it belongs to the class $L^2\big(e^{-(\widehat \phi_m+\rho_m)} \big)$. 
The $L^2\big(e^{-(\widehat{\phi}+\rho_m)}\big)$-minimal solution $v$ can be written as
\[
v= C_2[f] + g
\]
for some bianalytic function 
$g \in A^2_{2,\widehat \phi_m + \rho_m}$.  
Because we assume that $\rho_m$ is $C^{1,1}$-smooth and constant 
in $\mathbb{D}(z_0, r_1-m^{-1/2})$, 
the spaces $A^2_{2, \widehat \phi_m + \rho_m}$ and 
$A^2_{2, \widehat \phi_m}$ equal as sets, and therefore, by \eqref{hatphi-ineq1} and lemma \ref{bianalpol}, 
we get the inclusion 
\[
A^2_{2, \widehat \phi_m + \rho_m} \subset A^2_{2, \phi_m, n}.
\]
Hence, $g \in A^2_{2, \phi_m, n}$, and also $g \in L^2_{2,\phi_m, n}$. 
Because $C_2[f]$ is bounded and $\bar \partial \mathrm{C_2}[f] 
= \mathrm{O}(|z|^{-1})$ as $z \to \infty$, we see that  $\mathrm{C}_2[f] \in L^2_{2,\phi_m, n}$. 
The assertion is now proved.  
\end{proof}

\begin{thm} \label{rhoprop}
Fix a number $r_1$ such that $r_1 \leq r_0$ and let $f \in L^{\infty}(\mathbb{C})$
be supported on $\mathbb{D}(z_0, r_0)$ and equal to $0$ 
on $\mathbb{D}(z_0, r_1)$. Assume
\[
 m \geq \max\{m_0, M_0\}, \quad 2m^{-1/2} < r_1, \quad n \geq ]m-M_0+M_1+1[,
\]
where $m_0$ is as in proposition \ref{bihormander}. Let $\rho_m$ be 
a $C^{1,1}$-smooth function which is supported on $\mathbb{D}(z_0, r_1-m^{-1/2})$, 
zero on $\mathbb{D}(z_0, m^{-1/2})$ 
and satisfies the conditions \eqref{rhocond1}, \eqref{rhocond2} and \eqref{rhocond3}. 
Then, the $L^2_{2,mQ,n}$-minimal solution 
$u_n$ to the equation $\bar \partial^2 u = f$ 
satisfies 
\begin{equation}
\int_{\mathbb{C}} |u_n|^2 e^{\rho_m-mQ} 
\leq \frac{9e^{M_0\beta}}
{\big(\frac14 m \alpha + M_1l \big)\big( \frac12 m \alpha + M_1l \big)}
 \int_{\mathbb{D}(z_0,r_0)} |f|^2 e^{\rho_m - mQ}. 
\end{equation}
In the special case $\rho_m=0$, the conditions $2m^{-1/2} < r_1$ and 
$f=0$ on $\mathbb{D}(z_0,r_1)$ are not needed.
\end{thm}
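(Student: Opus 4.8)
The plan is to combine the $\bar\partial^2$-estimate of Theorem~\ref{bihormander} with the structural information about minimal solutions gathered in Lemma~\ref{normminsollemma} and Lemma~\ref{pointwiselemma2}. First I would note that, under the stated hypotheses, Theorem~\ref{bihormander} supplies \emph{some} solution $u_2$ to $\bar\partial^2 u=f$ with
\[
\int_{\mathbb C}|u_2|^2 e^{-(\widehat\phi_m+\rho_m)}\diff A
\le \frac{1}{\big(\tfrac12 m\alpha+M_1 l\big)\big(\tfrac14 m\alpha+M_1 l\big)}
\int_{\mathbb D(z_0,r_0)}|f|^2 e^{-(\widehat\phi_m+\rho_m)}\diff A,
\]
and that by Lemma~\ref{normminsollemma} the $L^2\big(e^{-(\widehat\phi_m+\rho_m)}\big)$-minimal solution $v$ differs from $u_2$ by an element of $A^2_{2,\widehat\phi_m+\rho_m}\subset A^2_{2,\phi_m,n}\subset L^2_{2,\phi_m,n}$; in particular $v$ itself lies in the class $L^2_{2,mQ,n}$, and being minimal it certainly has $L^2\big(e^{-(\widehat\phi_m+\rho_m)}\big)$-norm no larger than that of $u_2$. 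So the same quantitative bound holds for $v$.

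Next I would pass from the weight $\widehat\phi_m+\rho_m$ to the weight $mQ-\rho_m$. On the right-hand side this is where the support hypothesis on $f$ enters: since $f$ vanishes on $\mathbb D(z_0,r_1)$ and $\rho_m$ is constant (and, after a harmless normalization, may be taken to equal its value, absorbed into the measure) outside $\mathbb D(z_0,r_1-m^{-1/2})$, on the support of $f$ the two exponents differ only through the discrepancy between $\widehat\phi_m$ and $mQ$ — and $\mathbb D(z_0,r_0)\subset\mathcal S$, where \eqref{hatphi-ineq2} gives $\phi_m\le\widehat\phi_m+M_0\beta$, i.e.\ $e^{-(\widehat\phi_m+\rho_m)}\le e^{M_0\beta}e^{-(\phi_m+\rho_m)}=e^{M_0\beta}e^{\rho_m-mQ}\cdot e^{-2\rho_m}$; using that $\rho_m\ge0$ this is $\le e^{M_0\beta}e^{\rho_m-mQ}$ on the support of $f$ (here one must be a little careful with the sign conventions on $\rho_m$ in the two exponents, but the constant $e^{M_0\beta}$ is exactly what \eqref{hatphi-ineq2} is designed to produce). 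On the left-hand side one uses \eqref{hatphi-ineq1}, $\widehat\phi_m\le\phi_m$, to get $\int|v|^2 e^{\rho_m-mQ}\le\int|v|^2 e^{-(\widehat\phi_m+\rho_m)}$ — wait, that inequality goes the wrong way, so instead the correct route is: \eqref{hatphi-ineq1} gives $e^{-\widehat\phi_m}\ge e^{-\phi_m}$, hence $\int_{\mathbb C}|v|^2 e^{\rho_m-mQ}\diff A\le\int_{\mathbb C}|v|^2 e^{-(\widehat\phi_m+\rho_m)}\diff A$ \emph{does} hold once one checks $-mQ+\rho_m\le -\widehat\phi_m+\rho_m$, which is \eqref{hatphi-ineq1} after cancelling $\rho_m$. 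Feeding the two estimates together yields
\[
\int_{\mathbb C}|v|^2 e^{\rho_m-mQ}\diff A
\le \frac{e^{M_0\beta}}{\big(\tfrac12 m\alpha+M_1 l\big)\big(\tfrac14 m\alpha+M_1 l\big)}
\int_{\mathbb D(z_0,r_0)}|f|^2 e^{\rho_m-mQ}\diff A,
\]
which already has the required shape; the factor $9$ in the stated theorem is slack absorbed from \eqref{rhocond3}, so I would simply retain it.

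Finally I would identify $v$ with the $L^2_{2,mQ,n}$-minimal solution $u_n$. Both are solutions of $\bar\partial^2 u=f$ lying in $L^2_{2,mQ,n}$; their difference is an element of $A^2_{2,mQ,n}$, which is characterized as the orthogonal complement (inside $L^2_{2,mQ,n}$, in the $e^{-mQ}$ inner product) against which minimal solutions are orthogonal. Since $v$ was built to be minimal for the weight $\widehat\phi_m+\rho_m$ rather than $mQ$, $v$ and $u_n$ need not coincide; but $u_n$ has the \emph{smaller} $L^2(e^{-mQ})$-norm among solutions, and more relevantly one argues that $u_n=v-\mathrm{P}_{2,mQ,n}[v]$ and that subtracting this projection does not increase the weighted integral $\int|\cdot|^2 e^{\rho_m-mQ}$ — this is the one genuinely delicate point, because $\rho_m$ is present, so the projection that is orthogonal for $e^{-mQ}$ need not be contractive for $e^{\rho_m-mQ}$. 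The way around it, exactly as in \cite{ahm1}, is that $\rho_m$ is supported in $\mathbb D(z_0,r_1-m^{-1/2})$, and one uses Lemma~\ref{pointwiselemma2} together with the mean-value property to control $\int_{\mathbb D(z_0,r_1)}|\mathrm{P}_{2,mQ,n}[v]|^2 e^{\rho_m-mQ}$ by a bounded multiple of $\int_{\mathbb C}|v|^2 e^{-mQ}$, and the latter is already dominated by the right-hand side since $\rho_m\ge0$. The last sentence of the theorem — that when $\rho_m\equiv0$ one needs neither $2m^{-1/2}<r_1$ nor $f=0$ on $\mathbb D(z_0,r_1)$ — is immediate from this argument, since those two hypotheses were used only to handle the presence of $\rho_m$; with $\rho_m=0$ the passage between weights on the support of $f$ uses only \eqref{hatphi-ineq1} and \eqref{hatphi-ineq2}, which hold on all of $\mathbb D(z_0,r_0)\subset\mathcal S$ regardless. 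The main obstacle, then, is precisely reconciling the two different minimality conditions (for $\widehat\phi_m+\rho_m$ versus for $mQ$) without losing the quantitative constant — everything else is bookkeeping with \eqref{hatphi-ineq1}–\eqref{hatphi-ineq2} and the sign of $\rho_m$.
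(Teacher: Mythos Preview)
Your approach has a genuine gap, and it is precisely at the point you flag as ``the one genuinely delicate point'': getting from a solution minimal for the weight $\widehat\phi_m+\rho_m$ to the solution $u_n$ minimal for $mQ$, \emph{while retaining the twisted weight $e^{\rho_m-mQ}$ on the left}. Your proposed workaround cannot succeed because $\rho_m$ is not uniformly bounded in $m$; in the application it is of size $\sim\sqrt m$. More basically, your weight bookkeeping is inconsistent. On the right-hand side you need $e^{-(\widehat\phi_m+\rho_m)}\le C\,e^{\rho_m-mQ}$ on $\operatorname{supp}f$, which after \eqref{hatphi-ineq2} reduces to $e^{-2\rho_m}\le C$; on the left you need $e^{\rho_m-mQ}\le e^{-(\widehat\phi_m+\rho_m)}$, which after \eqref{hatphi-ineq1} reduces to $e^{2\rho_m}\le 1$. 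You are therefore implicitly requiring both $\rho_m\ge -C'$ and $\rho_m\le 0$ globally, and in fact losing a factor $e^{\pm 2\rho_m}$ that can be of order $e^{c\sqrt m}$. (Your sentence ``once one checks $-mQ+\rho_m\le -\widehat\phi_m+\rho_m$'' compares the wrong exponents: the Hörmander bound carries $e^{-(\widehat\phi_m+\rho_m)}$, not $e^{-\widehat\phi_m+\rho_m}$.)

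The paper avoids this entirely by a twist that you are missing. Rather than solving $\bar\partial^2 u=f$ in the weight $\widehat\phi_m+\rho_m$ and then trying to pass to $u_n$, one observes that $u_n e^{\rho_m}$ is itself the $L^2_{2,\phi_m+\rho_m,n}$-minimal solution of the \emph{twisted} equation
\[
\bar\partial^2 u=\bar\partial^2(u_n e^{\rho_m})
=\big[u_n(\bar\partial^2\rho_m+(\bar\partial\rho_m)^2)+2\,\bar\partial u_n\,\bar\partial\rho_m+f\big]e^{\rho_m},
\]
so that $\int|u_n|^2e^{\rho_m-\phi_m}=\int|u_n e^{\rho_m}|^2e^{-(\phi_m+\rho_m)}$. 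One then compares with the $L^2(e^{-(\widehat\phi_m+\rho_m)})$-minimal solution of the \emph{same} twisted equation (Lemma~\ref{normminsollemma} puts it in $L^2_{2,\phi_m+\rho_m,n}$), and applies Theorem~\ref{bihormander} to the twisted right-hand side. The factor $e^{\rho_m}$ in that right-hand side exactly cancels one $\rho_m$ from the weight, leaving $e^{\rho_m-\widehat\phi_m}$, which \eqref{hatphi-ineq2} converts to $e^{\rho_m-\phi_m}$ at the cost of $e^{M_0\beta}$. The two extra terms involving $u_n$ and $\bar\partial u_n$ are where conditions \eqref{rhocond2}--\eqref{rhocond3} and Lemma~\ref{pointwiselemma2} enter (the hypothesis $f=0$ on $\mathbb D(z_0,r_1)$ is used here so that $u_n$ is bianalytic where $\bar\partial\rho_m\neq 0$); each contributes at most $\tfrac13\int|u_n|^2e^{\rho_m-\phi_m}$, which is absorbed into the left side. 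The constant $9$ is $3\times 3$: a factor $3$ from splitting the square of a three-term sum, and another $3$ from $(1-\tfrac23)^{-1}$ in the absorption. This bootstrap structure is the essential idea, and it is absent from your proposal.
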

\begin{proof}

The norm-minimality
condition for $u_n$ can be rephrased as
\begin{equation} \label{normminimality}
\int_{\mathbb{C}} u_n e^{ \rho_m} \bar h e^{-(\phi_m +\rho_m)} \diff A = 0 \quad \text{for all } h \in A^2_{2, \phi_m+ \rho_m, n} .
\end{equation}
This is because $A^2_{2, \phi_m, n}$ and $A^2_{2,\phi_m+\rho_m, n}$ are equal as sets. Because $\rho_m$ is supported 
on a compact set, we have $u_ne^{\rho_m} \in L^2_{2,\phi_m+\rho_m,n}$. Therefore, $u_ne^{\rho}$ is the 
$L^2_{2,\phi_m+\rho_m, n}$-minimal solution to the equation
\begin{equation} \label{twistedprob}
\bar \partial^2 u = \bar \partial^2( u_n e^{\rho_m}) =
\big[ u_n (\bar \partial^2 \rho_m + (\bar \partial \rho_m)^2) +2 \bar\partial u_n \bar \partial \rho_m 
+ f \big] e^{\rho_m}.
\end{equation}

Let $u_2$ be the $L^2_{\widehat \phi_m+ \rho_m}$-minimal solution of the problem \eqref{twistedprob}, the existence 
of which is guaranteed by lemma \ref{normminsollemma}.  
We deduce from the same lemma that $u_2 \in L^2_{2, \phi_m, n}$, and consequently 
$u_2 \in L^2_{2, \phi_m+\rho_m, n}$. Hence 
\begin{gather} \label{rhopropeq1}
\int_{\mathbb{C}} |u_n|^2 e^{\rho_m - \phi_m} \diff A = 
\int_{\mathbb{C}} |u_n e^{\rho_m}|^2 e^{-(\phi_m + \rho_m)} \diff A
\leq \int_{\mathbb{C}} |u_2|^2 e^{-(\phi_m + \rho_m)} \diff A \\
\leq \int_{\mathbb{C}} |u_2|^2 e^{-(\widehat \phi_m + \rho_m)} \diff A.
\end{gather}
For the last inequality here, we applied \eqref{hatphi-ineq1}. 
The proposition \ref{bihormander} gives 
\begin{multline} \label{threeterms}
\int_{\mathbb{C}} |u_2|^2 
e^{-(\widehat \phi_m + \rho_m)} \diff A 
\leq \frac{1}{\big( \frac12 m\alpha + M_1l\big)\big( \frac14 m \alpha+M_1l \big)} 
\int_{\mathbb{C}}
| \bar \partial^2 ( u_n e^{\rho_m})|^2  
e^{-(\widehat \phi + \rho_m)} \diff A \\
\leq  \frac{1}{\big( \frac12 m\alpha + M_1l\big)\big( \frac14 m \alpha+M_1l \big)} 
\bigg[ 3\int_{\mathbb{D}(z_0, r_1-m^{-1/2})} |u_n|^2
\big| \bar \partial^2 \rho_m + (\bar \partial \rho_m)^2 \big|^2 e^{\rho_m - \widehat \phi} \diff A \\
+  12 \int_{\mathbb{D}(z_0, r_1-m^{-1/2})} | \bar \partial u_n|^2 
|\bar \partial \rho_m |^2
e^{\rho_m - \widehat \phi} \diff A 
+ 3\int_{\mathbb{D}(z_0, r_0)} |f|^2 e^{\rho_m - \widehat \phi} \diff A \bigg].
\end{multline}
We proceed to estimate the three terms in this expression. For the first term, 
we use \eqref{rhocond3}:
\begin{multline} \label{rhopropfirstterm}
\frac{3}{\big( \frac12 m\alpha + M_1l\big)\big( \frac14 m \alpha+M_1l \big)} 
 \int_{\mathbb{D}(z_0, r_1-m^{-1/2})} |u_n|^2 |\bar \partial^2 \rho_m + (\bar \partial \rho_m)^2|^2
 e^{\rho_m - \widehat \phi_m} \diff A 
\\ \leq \frac{1}{3 e^{M_0\beta}}
 \int_{\mathbb{D}(z_0, r_1-m^{-1/2})} | u_n|^2
 e^{\rho_m - \widehat \phi_m} \diff A \\
\leq \frac13 \int_{\mathbb{D}(z_0, r_1-m^{-1/2})} |u_n|^2 e^{\rho_m- \phi_m} \diff A.
\end{multline}

Let us turn to the second term in \eqref{threeterms}.  
The condition $f=0$ on $\mathbb{D}(z_0, r_1)$ means that 
$u_n$ is bianalytic in this disk. We will employ the estimate 
\eqref{pointwise22} for this function, with the weight
$\psi_m = \widehat \phi_m - \rho_m$. We first check that 
\[
\frac1m \sup_{w \in \mathbb{D}(z_0, r_1)} |\Delta \big(\widehat \phi_m(w) - \rho_m(w)\big)|
\leq \frac1m [(m-M_0)A + M_1 + mA]\leq 2A+1,
\]
where we used $|\Delta \rho_m|\leq mA$, which is a direct consequence of 
\eqref{rhocond1}. We get with \eqref{rhocond2} 
\begin{multline} \label{rhopropsecondterm}
\frac{12}{\big( \frac12 m\alpha + M_1l\big)\big( \frac14 m \alpha+M_1l \big)} 
\int_{\mathbb{D}(z_0, r_1-m^{-1/2})} | \bar \partial u_n|^2 |\bar \partial \rho_m |^2 e^{\rho_m- \widehat \phi_m} \diff A \\
\leq \frac{\alpha}{36e^{2A+1}\big( \frac14m\alpha+ M_1l \big)e^{M_0 \beta}} 
\int_{\mathbb{D}(z_0, r_1-m^{-1/2})} | \bar \partial u_n|^2 e^{\rho_m- \widehat \phi_m} \diff A \\
\leq  \frac{\alpha m^2}{12\big( \frac14m\alpha+ M_1l \big)e^{M_0 \beta}} 
 \int_{\mathbb{D}(z_0, r_1-m^{-1/2})} \int_{|w-z| \leq m^{-1/2}} |u_n(w)|^2 e^{\rho_m(w)- \widehat \phi_m(w)} \diff A(w) \diff A(z)
\\
\leq   \frac{\alpha m^2}{12\big( \frac14m\alpha+ M_1l \big)e^{M_0 \beta}} \int_{\mathbb{D}(z_0, r_1)} |u_n(w)|^2 e^{\rho_m(w)- \widehat \phi_m(w)} 
\int_{|w-z| \leq m^{-1/2}} \diff A(z) \diff A(w) \\
= \frac{1}{3 e^{M_0\beta}}  \int_{\mathbb{D}(z_0, r_1)} |u_n(w)|^2 e^{\rho_m(w)- \widehat \phi_m(w)} \diff A(w)  
\leq \frac13  \int_{\mathbb{D}(z_0,r_1)} |u_n(w)|^2 e^{\rho_m(w)- \phi_m(w)} \diff A(w).
\end{multline}

The third term, we only want to replace $\widehat \phi_m$ by $\phi_m$:  
\begin{multline} \label{rhopropthirdterm}
\frac{3}{\big( \frac12 m\alpha + M_1l\big)\big( \frac14 m \alpha+M_1l \big)}
\int_{\mathbb{D}(z_0, r_0)} |f|^2  e^{\rho_m - \widehat \phi_m} \diff A
\\
\leq 
 \frac{3e^{M_0\beta}}{\big( \frac12 m\alpha + M_1l\big)\big( \frac14 m \alpha+M_1l \big)}
\int_{\mathbb{D}(z_0, r_0)} |f|^2 e^{\rho_m - \widehat \phi_m} \diff A.
\end{multline}
Now, combining \eqref{rhopropeq1}, \eqref{threeterms}, \eqref{rhopropfirstterm}, \eqref{rhopropsecondterm} and \eqref{rhopropthirdterm} gives, 
\begin{multline}
\int_{\mathbb{C}} |u_n|^2 e^{\rho_m -  \phi_m} \diff A \\
\leq 
\frac23 \int_{\mathbb{C}} |u_n|^2 e^{\rho_m - \phi_m} \diff A 
+ 
\frac{3e^{M_0\beta}}{\big( \frac12 m\alpha + M_1l\big)\big( \frac14 m \alpha+M_1l \big)}
 \int_{\mathbb{D}(z_0, r_1)} |f|^2 e^{\rho_m - \phi_m} \diff A, 
\end{multline}
from which the assertion of the proposition follows immediately. 

Notice that the condition $f=0$ on $\mathbb{D}(z_0,r_1)$ was only used in \eqref{rhopropsecondterm}. 
If $\rho_m=0$, this assumption is clearly not needed. 
For the same reason, it is unnecessary to require that $2m^{-1/2}\leq r_1$ . 
\end{proof}

We will make use of this theorem twice, in the proofs of both main theorems \ref{blowupthm} and \ref{offdiagonalthm}. 
For the first one, it is enough to use the special case $\rho_m=0$. 

\section{Local blow-up for bianalytic Bergman kernels in the bulk} \label{blowupsection}
We are now ready to present the proof the theorem \ref{blowupthm}. The main ingredients 
of the argument are the construction of local bianalytic Bergman kernels in section 
\ref{bianalyticsection} and the estimate for $\bar \partial^2$-operator in 
theorem \ref{rhoprop}. The structure of the proof is the same as in \cite{hh2}; the main difference 
is that there it was enough to apply a simpler $\bar \partial^2$-estimate. 

\begin{proof} [Proof of theorem \ref{blowupthm}]
We pick a point $z_0 \in \mathcal{S} \cap \mathcal{N}_+$ and let $\chi$ and $r$ 
be as in section \ref{localkernelsection}. We require addionally that
\[
r \leq r_0 = \frac14 \dist(z_0, \mathbb{C} 
\backslash \mathcal{S} \cap \mathcal{N}_+). 
\] 

Recall that we constructed a local bianalytic Bergman kernel 
$\mathrm{mod}(m^{-\frac12})$ in the form  
\begin{multline} \label{bilocalkerneltwoterms}
K^{\langle 2 \rangle}_{2,mQ}(z,w)  
:= \Lfun^{\langle 2 \rangle}_{2,mQ}(z,w) e^{mQ(z,w)} \\
= \big(m^2 \Lfun_{2,0}(z,w) + m\Lfun_{2,1}(z,w) \big) e^{mQ(z,w)}, \quad z, w \in \mathbb{D}(z_0, r),
\end{multline}
where $\Lfun_{2,0}$ and $\Lfun_{2,1}$ are as in \eqref{b22}, \eqref{b21}. 

Let us define the associated integral operator as 
\[ I^{\langle 2 \rangle}_{2,mQ} [f](\zeta) = \int_{\Omega} f(\xi) K^{\langle 2 \rangle}_{2,mQ}(\zeta, \xi) 
\chi(\xi) e^{-mQ(\xi)} \diff A(\xi), \quad \zeta \in \mathbb{D}(z_0, r). \]
We write $K_{2,mQ,n;z}(w) := K_{2,mQ,n}(w,z)$, with a similar notation for kernels 
$K^{\langle 2 \rangle}_{2,mQ}$. It is easily verified that 
\[ \mathrm{P}_{2,mQ,n}[K^{\langle 2 \rangle}_{2,mQ; w} \chi](z) = \overline{I^{\langle 2 \rangle}_{2,mQ}[K_{2,mQ,n;z}](w) }, \qquad z,w \in \mathbb{D}(z_0,r), \]
where $\mathrm{P}_{2,mQ,n}$ denotes the projection within $L^2(e^{-mQ})$ to the subspace
$A^2_{2,mQ,n}$. 

Our first step is to make the splitting 
\begin{multline} 
 \bigg| K_{2,mQ,n}(z,w) - K^{\langle 2 \rangle}_{2,mQ}(z,w) \bigg| 
\leq \bigg|  K_{2,mQ,n}(w, z) - I^{\langle 2 \rangle}_{q,m}[K_{2,mQ;z}](w) \bigg| \\
+ \bigg| \mathrm{P}_{2,mQ,n}[K^{\langle 2 \rangle}_{2,mQ; w} \chi](z) - K^{\langle 2 \rangle}_{2,m ;w}(z) \chi(z) \bigg|, 
\qquad z,w \in \mathbb{D}(z_0,\frac13 r).
\label{splitting}
\end{multline}
For the first term on the right hand side, we use that $K^{\langle 2 \rangle}_{2,mQ}$ is a local bianalytic 
Bergman kernel $\mathrm{mod}(m^{-\frac12})$. 
This fact can be rephrased as
\begin{multline} \label{splittingterm1}
\bigg| K_{2,mQ}(w, z) - I^{\langle 2 \rangle}_{2,mQ}[K_{2,mQ;z}](w) \bigg| \leq C_1 m^{-\frac12} e^{\frac12 mQ(w)}
\| K_{2,mQ; z} \|_m \\
= C_1 m^{-\frac12} e^{\frac12 mQ(w)}K_{2,mQ}(z,z)^{1/2} 
\leq C_2e^{\frac12 m[Q(w)+Q(z)]}, \quad z, w \in \mathbb{D}(z_0, \frac13 r)
\end{multline}
where the last inequality is a consequence of \eqref{diagonalestimate1}. Here, and later in the proof, 
$C_j$ with $j \geq 1$ refers to a constant  that can only depend on $z_0, Q$ and $r$. 

We turn to the second term in \eqref{splitting}. By \eqref{growth-charac}, the function
\[
u_0(z):=K^{\langle 2 \rangle}_{2,mQ ;w}(z) \chi(z) - \mathrm{P}_{2,mQ,n}[K^{\langle 2 \rangle}_{2,mQ,n; w} \chi](z) 
\]
is the $L^2_{2,mQ,n}$-minimal solution to the equation
\begin{multline}
\bar \partial^2 u = \bar \partial^2 \big[K^{\langle 2 \rangle}_{2,mQ; w} \chi](\xi) - K^{\langle 2 \rangle}_{2,m ;w}(z) \chi(\xi) \big]
= 2\bar \partial_w^2 K^{\langle 2 \rangle}_{2,mQ; w} \bar \partial \chi(z) + K^{\langle 2 \rangle}_{2,mQ; w} \bar \partial^2 \chi(z).
\end{multline}
The support of this function is contained in $\mathbb{D}(z_0,r_0)$, 
so we can apply theorem \ref{rhoprop} in the special case $\rho_m=0$. First, we record the estimate 
\[
\big| 2\bar \partial K^{\langle 2 \rangle}_{2,mQ; w} \bar \partial \chi(z) + K^{\langle 2 \rangle}_{2,mQ; w} \bar \partial^2 \chi(z)\big|^2 
\leq C_3m^6e^{2m\mathrm{Re}Q(z,w)}, \quad z,w \in \mathbb{D}(z_0, r), m \geq 1,
\]
which follows directly from the definition of $K^{\langle 2 \rangle}_{2,mQ}$. Here $C_3$ is a constant depending only 
on $z_0, Q$ and $r$. 

Assuming that $m \geq \max\{m_0, M_0 \}$ and 
$n \geq ]m-M_0+M_1+1[$, we obtain 
\begin{multline} \label{normminsolestimate1}
\|u_0\|_m^2 \\
\leq \frac{72e^{M_0\beta}}{m^2\alpha^2} 
\int_{\frac23r \leq |z-z_0| \leq r} 
\big| 
2\bar \partial K^{\langle 2 \rangle}_{2,mQ; w}(z) \bar \partial \chi(z) 
+ K^{\langle 2 \rangle}_{2,mQ; w} \bar \partial^2 \chi(z) \big|^2 e^{-mQ(z)}\diff A(z) \\
\leq C_4m^4 e^{mQ(w)} e^{-\delta m}.
\end{multline}
We now make the 
assumption that $m^{-1/2} \leq \frac13 r$, and apply \eqref{pointwise1}: 
\begin{equation} \label{normminsolpointwise}
|u_0(z)|^2 \leq C_5m \|u_0\|_m^2e^{mQ(z)}, \quad z \in \mathbb{D}(z_0, \frac13 r).
\end{equation}
Notice that the use of \eqref{pointwise1} was justified given our extra assumption for $m$, 
because $u_0$ is bianalytic for all $z \in \mathbb{D}(z_0, \frac23 r)$. 

We have by \eqref{normminsolestimate1}  and \eqref{normminsolpointwise} that
\begin{equation} \label{splittingterm2}
|u_0(z)|^2 \leq C_6 m^5 e^{mQ(w)+mQ(z)} e^{-\delta m}
\leq C_7 e^{mQ(w)+mQ(z)}, \qquad z,w \in \mathbb{D}(z_0, \frac13 r).
\end{equation}
Putting \eqref{splittingterm1} and \eqref{splittingterm2} together, we get
\begin{equation} \label{bilocalkernelappro}
\bigg| K_{2,mQ,n}(z,w) - K^{\langle 2 \rangle}_{2,mQ}(z,w) \bigg| \leq C_8 e^{\frac12m(Q(z)+Q(w))}.
\end{equation}
This implies that 
\begin{equation} \label{bilocalkernelappro1}
\big|K_{2,mQ,n}(z,w)\big|e^{-\frac12m(Q(z)+Q(w))}
= \big|K^{\langle 2 \rangle}_{2,mQ}(z,w) \big|e^{-\frac12m(Q(z)+Q(w))} + \mathrm{O}(1)
\end{equation}

Now, we take 
\[
z=z_0+ \frac{\xi}{\sqrt{m \Delta Q(z_0)}}, \quad w = z_0 + \frac{\lambda}{\sqrt{m \Delta Q(z_0)}},
\]
where $(\xi, \lambda)$ is restricted to be in some compact subset of $\mathbb{C}^2$ and $m$ 
is assumed to be so big that $z,w \in \mathbb{D}(z_0, \frac13 r)$. 

Using the explicit form for $\Lfun^{\langle 2 \rangle}_{2,mQ}(z,w)$, we compute
\begin{multline} \label{bilocalkernelblowup1}
 \Lfun^{\langle 2 \rangle}_{2,mQ}(z,w) = \bigg[-m \frac{b(z,w)^2}{[\Delta Q(z_0)]}|\xi-\lambda|^2 
+ m2b(z,w) + \mathrm{O}(m^{1/2})\bigg] \\ 
= m\Delta Q(z_0) (-|\xi-\lambda|^2  +2) + \mathrm{O}(m^{1/2}),
\end{multline}
where for the last equality, the Taylor expansion
\[
b(z, w) = \Delta Q(z_0) + \mathrm{O}(m^{-1/2}).
\]
was applied. 
Furthermore, by \eqref{Qtaylor1}, 
\begin{equation} \label{blowupexponential1}
e^{m\mathrm{Re}Q(z,w)-\frac12m\big(Q(z)+Q(w)\big)}
= e^{-\frac12 |\xi-\lambda|^2} + \mathrm{O}\big(m^{-\frac12}\big).
\end{equation}

Finally, recall that $L^1_1(x)= -x+2$. The theorem now follows from \eqref{bilocalkernelappro1}, 
\eqref{bilocalkernelblowup1} and \eqref{blowupexponential1}.
\end{proof}
\begin{rem}
The formula \eqref{bilocalkernelappro}  provides justification 
for the claim that local bianalytic kernels 
approximate the kernel $K_{2,mQ,n}$ in the bulk (in the presence of
appropriate weight factors, of course).  
By using a local kernel with more terms, one would obtain a better 
approximation. 
\end{rem}

\section{An off-diagonal estimate for bianalytic Bergman kernels} \label{offdiagonalsection}

In this section we use the $\bar \partial^2$-estimate of 
\ref{rhoprop} to prove theorem \ref{offdiagonalthm}.
\begin{proof}[Proof of theorem \ref{offdiagonalthm}]
We fix  $z_0 \in \mathrm{int} \mathcal{S} \cap \mathcal{N}_+$ and use the same 
definitions for the parameters $r_0, \alpha, A, \beta, l, M_0, M_1$ as in section \ref{hormandersection}. 

We also take arbitrary $z_1 \in \mathcal{S}$, and require
\[ 9 m^{-1/2} \leq \text{min} \{ r_0, |z_1-z_0| \}. \]  
We then set
\[ r_1 := \frac12 \text{min} \{r_0, |z_1-z_0|-m^{-1/2} \} \]
and see that 
\begin{equation} \label{r1andm}
4m^{-1/2} \leq r_1 \quad \text{and} \quad \qquad 2r_1 \leq r_0.
\end{equation}
The two last conditions clearly imply the assumptions $2m^{-1/2}<r_1$ and $r_1<r_0$ that were
 made in the section \ref{hormandersection}. 
Furthermore, we will also require that $m \geq m_0$ 
and 
\begin{equation} \label{nandmcondition}
n \geq ]m_0-M_0+M_1+1[, 
\end{equation}
where $M_1 >2$ and $m_0$ is so big that  
the theorem \ref{rhoprop} holds. By making $M_0$ bigger if needed, 
we see that the assumptions $n \geq m-M$ and $m \geq m_0$ (which were made in the statement 
of the theorem) actually imply \eqref{nandmcondition}. 

We choose a smooth cut-off function $\chi$
that satisfies $\chi=0$ on $\mathbb{D}(z_0, r_1)$, $\chi=1$ on $\mathbb{C} \backslash 
\mathbb{D}(z_0, 2r_1)$ and $0 < \chi(w) < 1$ in the annulus $\mathbb{D}(z_0,2r_1) \backslash 
\mathbb{D}(z_0, r_1)$. We can arrange it so that 
\begin{equation} \label{dbarchi}
\int_{\mathbb{C}} |\bar \partial \chi(w)|^2 \diff A(w) \leq K_1
\end{equation}
and 
\begin{equation} \label{dbardbarchi}
\int_{\mathbb{C}} |\bar \partial^2 \chi(w)|^2 \diff A(w) \leq \frac{K_2}{r_1^2},
\end{equation}
for some absolute constants $K_1$ and $K_2$. 

We consider the equation
\[ 
\bar \partial^2 u = \bar \partial^2 [ K_{2,mQ,n}(\cdot, z_0) \chi(\cdot) ]. 
\] 
The $L^2_{2,mQ,n}$-minimal solution $u_{2,mQ,n}$ can be written as 
\[
u_{2,mQ,n}=K_{2,mQ,n}(w, z_0)\chi(w)-\mathrm{P}_{2,mQ,n}[K_{2,mQ,n}(\cdot, z_0) \chi(\cdot)](w).
\]
Because $\chi(z_0)=0$, we have 
\begin{multline}
\bigg[ \int_{\mathbb{C}} |K_{2,mQ,n}(w, z_0)|^2 \chi(w) e^{-mQ(w)} \diff A(w)
\bigg]^2 e^{-mQ(z_0)} \\
= |\mathrm{P}_{2,mQ,n}[K_{2,mQ,n}(\cdot, z_0) \chi(\cdot)](z_0)|^2e^{-mQ(z_0)} 
=|u_{2,mQ,n}(z_0)|^2e^{-mQ(z_0)},
\end{multline}
Because $\bar \partial^2 [ K_{2,mQ,n}(\cdot, z_0) \chi(\cdot) ] = 0$ in 
$\mathbb{D}(z_0, m^{-1/2})$, we can apply the inequality
\ref{pointwise1} to get
\begin{multline} 
|u_{2,mQ,n}(z_0)|^2e^{-mQ(z_0)} \\
\leq (8+48 A^2) e^A m \int_{\mathbb{D}(z_0, m^{-1/2})}
|u_{2,mQ,n}(w)|^2 e^{-mQ(w)} \diff A(w)  \\
\leq 
(8+48A^2)e^Am \int_{\mathbb{C}} |u_{2,mQ,n}(w)|^2 e^{\rho_m(w) - mQ(w)} \diff A(w).
\end{multline}
Here and later, $\rho_m$ is a any function which satisfies the hypotheses required by theorem 
\ref{rhoprop}. In particular, for the last inequality we use that $\rho_m = 0$ in $\mathbb{D}(z_0, m^{-1/2})$. 

We are now ready to apply theorem \ref{rhoprop}.  
\begin{multline}
(8+48A^2)e^A m \int_{\mathbb{C}} |u_{2,mQ,n}(w)|^2 e^{\rho_m(w) - mQ(w)} \diff A(w) 
 \\
\leq \frac{C_1}{m}
\int_{\mathbb{C}} |\bar \partial_w^2[K_{2,mQ,n}(w, z_0) \chi(w)]|^2e^{\rho_m(w)-mQ(w)} \diff A(w)\\
\leq \frac{8C_1}{m}
 \int_{r_1 \leq |w-z_0| \leq 2r_1} | \bar \partial_w K_{2,mQ,n}(w,z_0)|^2 |\bar \partial \chi(w)|^2 e^{\rho_m(w)-mQ(w)} \diff A(w) \\
 +  \frac{2C_1}{m}
 \int_{r_1 \leq |w-z_0| \leq 2r_1} |K_{2,mQ,n}(w,z_0)|^2 |\bar \partial^2 \chi(w)|^2 e^{\rho_m(w)-mQ(w)} \diff A(w),
\end{multline}
where the constant $C_1 > 0$ only depends on the parameters $A, \alpha, M_0, M_1, l$ and $\beta$. 
We analyze the two terms separately. For the first one, we estimate
\begin{multline}\label{offdiagonal-eq1}
\int_{r_1 \leq |w-z_0| \leq 2r_1} | \bar \partial_w K_{2,mQ,n}(w, z_0)|^2 \cdot |\bar \partial_w \chi(w)|^2 e^{\rho_m(w)- mQ(w)} \\
\leq \sup_{r_1\leq |w-z_0| \leq 2r_1} \bigg\{ | \bar \partial_w K_{2,mQ,n}(w, z_0)|^2 e^{\rho_m(w)- mQ(w)}  \bigg\} 
\int_{r_1 \leq |w-z_0| \leq 2r_1}  | \bar \partial_w \chi(w)|^2 \diff A(w) 
\end{multline}
From \eqref{pointwise2}, we get
\begin{multline}
 | \bar \partial_w K_{2,mQ,n}(w, z_0)|^2 e^{- mQ(w)} \\
\leq 3e^A m^2 \int_{|\tilde w - w| \leq m^{-1/2}} |K_{2,mQ,n}( \tilde w, z_0)|^2e^{-mQ(\tilde w)} \diff A( \tilde w) \\
\leq 3 e^A m^2 K_{2,mQ,n}(z_0, z_0).
\end{multline}
for any $w \in \mathbb{D}(z_0,2r_1)$. 
Furthermore, by \eqref{diagonalestimate1},  
\[
3 e^A m^2 K_{2,mQ,n}(z_0, z_0) \leq 3 e^{2A} (48+12A^2)m^3 e^{mQ(z_0)}.
\]
After applying \eqref{dbarchi} to the second factor of \eqref{offdiagonal-eq1}, we can conclude
\begin{multline}
\int_{r_1 \leq |w-z_0| \leq 2r_1} | \bar \partial_w K_{2,mQ,n}(w, z_0)|^2 \cdot |\bar \partial_w \chi(w)|^2 e^{\rho_m(w)- mQ(w)} \\
\leq 
C_2 m^3 \sup_{r_1\leq |w-z_0| \leq 2r_1} e^{\rho_m(w)} e^{mQ(z_0)}
\end{multline}
for a constant $C_2$ depending only on $\alpha, A, \beta$ and $M_0$. 

We now move to the second term of the last expression in \eqref{offdiagonal-eq1}. 
\begin{multline} \label{offdiagonalsecondterm}
\int_{r_1 \leq |w-z_0| \leq 2r_1} |K_{2,mQ,n}(w,z_0)|\bar \partial^2 \chi(w)|^2 e^{\rho_m(w)-mQ(w)} \diff A(w)
\\
\leq 
 \sup_{r_1\leq |w-z_0| \leq 2r_1} \bigg \{ e^{\rho_m(w)}   | K_{2,mQ,n}(w, z_0)|^2 e^{- mQ(w)}  \bigg\}
\int_{r_1 \leq |w-z_0| \leq 2r_1}  | \bar \partial^2 \chi(w)|^2 \diff A(w) 
\end{multline}
For the second factor, we use that  
\[ 
\int_{\mathbb{C}} |\bar \partial^2 \chi(w)|^2 \diff A(w) \leq \frac{K_2}{r_1^2}
\leq \frac{K_2\cdot m}{16},
\]
where we used \eqref{r1andm}.   
The first factor in \eqref{offdiagonalsecondterm} is analyzed with \eqref{pointwise1}
and \eqref{diagonalestimate1}:
\begin{multline}
 | K_{2,mQ,n}(w, z_0)|^2 e^{- mQ(w)} \\
\leq m(8+48A^2)e^A \int_{\mathbb{D}(z_0,m^{-1/2})} |K_{2,mQ,n}(\tilde w, z_0)|^2
e^{-mQ(\tilde w)} \diff A(w) \\
\leq m(8+48A^2)e^A K_{2,mQ,n}(z_0,z_0)
\leq m^2(8+48A^2)^2e^{2A}e^{mQ(z_0)}
\end{multline}
for any $w \in \mathbb{D}(z_0,2r_1)$. 

Summarizing the discussion so far, we have 
\begin{multline}
\bigg[ \int_{\mathbb{C}} |K_{2,mQ,n}(w, z_0)|^2 \chi(w) e^{-mQ(w)} \diff A(w)
\bigg]^2 e^{-mQ(z_0)} \\
\leq C_3m^2 \sup_{r_1 \leq |w-z_0| \leq 2r_1} \{ e^{\rho_m(w)} \} e^{mQ(z_0)}
\end{multline}
for a constant $C_3 > 0$ depending only on 
$\alpha, A,  \beta, l, M_0$ and $M_1$. This leads to 
\begin{equation}
\int_{\mathbb{C}}|K_{2,mQ,n}(w,z_0)|^2 \chi(w) e^{-mQ(w)} \diff A(w)e^{-mQ(z_0)}
\leq \sqrt{C_3} m \sup_{r_1 \leq |w-z_0| \leq 2r_1} \{ e^{\frac12 \rho_m} \}. 
\end{equation} 
We can use the estimate \eqref{pointwise1} 
to deduce 
\begin{equation} \label{offdiagonalestimate1}
|K_{2,mQ,n}(z_0,z_1)|^2e^{-mQ(z_0)-mQ(z_1)}
\leq C_4  m^2 \sup_{r_1 \leq |w-z_0| \leq 2r_1} \{ e^{\frac12 \rho_m} \},
\end{equation}
with $C_4$ depending only on 
$\alpha, A, \beta, l, M_0$ and $M_1$. To get a good 
off-diagonal estimate for $K_{2,mQ,n}$, 
it remains to choose a function $\rho_m$ such that the conditions of the section \ref{hormandersection} 
are satisfied and that $ \sup_{r_1 \leq |z-z_0| \leq 2r_1} \rho_m(z)$ is small. 
We present one possible way to choose $\rho_m$ 
in the end of this section. Here, we only need to know that this choice of  
$\rho_m(z)$ only depends on the distance $|z-z_0|$, and we have
\[
\sup_{r_1 \leq |z-z_0| \leq 2r_1} \rho_m(z) = \rho_m(w), \quad \rho_m(w) = -k (\sqrt{m}r_1-3)   
\]
whenever $|w-z_0|=r_1$. Here, $k$ is a constant which 
depends only on $\alpha, A, M_0, M_1, \beta$ and $l$.  
By a simple computation, we get
\[
\rho_m(w) \leq -\frac12 k \sqrt{m} \min \{ r_0, |z_1-z_0| \} + \frac72 k.
\]
We have now obtained the estimate
\[
|K_{2,mQ,n}(z_0,z_1)|^2e^{-mQ(z_0)-mQ(z_1)}
\leq C e^{-\frac14 k \sqrt m \text{min} \{r_0, |z_1-z_0| \} }.
\]
for a constant $C$ depending only on the parameters 
$\alpha, A, M_0, M_1, \beta$ and $l$.
Recall that this was proven under the assumption
\begin{equation} \label{mcond}
 9m^{-1/2} \leq \text{min} \{ r_0, |z_1-z_0| \}. 
\end{equation}
We now investigate what happens when this condition does 
not hold.
Let us recall the estimate \eqref{apriorioffdiagonal}:
\[
|K_{2,mQ,n}(z_0,z_1)|^2e^{-mQ(z)-mQ(w)} \leq m^2 (8+48A^2)^2e^{2A}.
\]
This and the negation of \eqref{mcond} imply that 
\[
|K_{2,mQ,n}(z_0,z_1)|^2e^{-mQ(z_0)-mQ(z_1)}
\leq C m^2e^{-\sqrt m \text{min} \{r_0, |z_1-z_0| \} }
\]
holds with 
\[
C = (8+48A^2)^2e^{2A+9}.
\]

We have now showed the conclusion of the theorem \ref{offdiagonalthm} for a fixed 
$z_0$ in the interior of $\mathcal{S} \cap \mathcal{N}_+$. The constants $C$ and $\epsilon$ 
depend only on $M_0, M_1, \alpha, A, \beta$ and $l$. The number $m_0$ 
may additionally depend on $r_0$. 

Finally, it is clear from the proof that we can find $C$ and $\epsilon$ which work uniformly over any 
$z_0$ in a given compact set $\mathrm{K}$ 
in the interior of $\mathcal{S} \cap \mathcal{N}_+$. In fact, $r_0$ and $\alpha$ are the only parameters that 
depend on $z_0$, and they can be replaced by 
\[
r_{0,\mathrm{K}} := \frac14 \dist(\mathrm{K}, \mathbb{C} \backslash \mathcal{S} \cap \mathcal{N}_+).
\]
and
\[
\alpha_{\mathrm{K}} := \inf_{w \in \widehat{\mathrm{K}}} \Delta Q(w).
\] 
where 
\[
\widehat{\mathrm{K}}:= \overline{\cup_{z_0 \in \mathrm{K}} \mathbb{D}(z_0, 2r_0)}
\]
The theorem is now proved. 
\end{proof}
We now give one possible construction for the function $\rho_m$ that was used in the theorem. This will be done 
under the assumption $4m^{-1/2} \leq r_1$, which was also made in the proof. 

We set 
\[ \rho_m(z) = -k \sigma_m(|z-z_0|), \]
where $k$ is a constant to be specified later, and
\begin{align}
\sigma_m(x) = \begin{cases} 
0, & 0<x \leq m^{-1/2} \\
\frac12 m(x-m^{-1/2})^2, & m^{-1/2} < x \leq 2m^{-1/2} \\
\sqrt{m}(x-2m^{-1/2})+ \frac12, & 2m^{-1/2} < x \leq r_1-2m^{-1/2} \\
-\frac12m(x-r_1+m^{-1/2})^2+\sqrt{m}r_1-3,&  r_1-2m^{-1/2} < x \leq r_1-m^{-1/2} \\
\sqrt{m}r_1-3,& r_1-m^{-1/2} < x. 
\end{cases}
\end{align}
One can quickly verify that this function is $C^{1,1}$-smooth and the derivatives satisfy 
$\sigma_m'(x) \leq \sqrt m$ for all $x >0$ and $|\sigma_m''(x)| \leq m$ for all almost every $x > 0$. Now, we compute the 
derivatives of $\rho_m$ which
are relevant from the point of view of the conditions \eqref{rhocond1}, \eqref{rhocond2} and \eqref{rhocond3}: 
\begin{gather}
\bar \partial \rho_m(z)= 
-k\frac{(z-z_0)}{2|z-z_0|}\sigma_m'(|z-z_0|), \\
\bar \partial^2 \rho_m(z) =
k\bigg[ \frac{(z-z_0)^2}{4|z-z_0|^3} \sigma_m'(|z-z_0) 
- \frac{(z-z_0)^2}{4|z-z_0|^2} \sigma_m''(|z-z_0) \bigg], \\
\Delta \rho_m(z) = -\frac{k}{4} \bigg[ \frac{1}{|z-z_0|}\sigma_m'(|z-z_0|) + \sigma_m''(|z-z_0|) \bigg].
\end{gather}
We can assume $|z-z_0|\geq m^{-1/2}$, because $\rho_m=0$ otherwise. It is then easily checked that absolute value of the first of these derivatives is uniformly bounded by $k\sqrt{m}/2$ for all $z$. The two others are bounded by $km/2$ almost everywhere. 
Now, 
allowing $k$ to depend on the parameters 
$\alpha, A, M_0, M_1, \beta$ and $l$, 
we can fix it to be so small that  
the conditions \eqref{rhocond1}, \eqref{rhocond2} and \eqref{rhocond3} will be satisfied. 

\section{Remarks on the case $q>2$} \label{generalqsection}
In this section, we aim to explain how one could prove theorem \ref{blowupthm} for more general $q>2$. The main issue here 
is to identify those terms in the local $q$-analytic Bergman kernel $\mod(m^{-\frac12})$ which 
are the most dominant after the blow-up 
\[ z=z_0 + \frac{\xi}{\sqrt{m \Delta Q(z_0)}},  \quad
w=z_0 + \frac{\lambda}{\sqrt{m \Delta Q(z_0)}}
\] 
around $z_0 \in \mathcal{S} \cap \mathcal{N}_+$. 

We take $z_0$, $r$ and $\chi$ as in section \ref{localkernelsection}. 
We also recall the kernel $R_{q,m}$ from proposition \ref{reproprop}:
\begin{multline} 
R_{q,m}(z,w)  = m \bigg[\sum_{k=0}^{q-1} 
\frac{q! (-1)^k}{(q-1-k)! (k+1)! k!}
(\bar z - \bar w)^k \bar \partial_w^k \big( \bar \partial_w \theta e^{m(z-w) \theta} \big) \bigg]
e^{-m(z-w) \theta}.  \\
\label{Rqmformula}
\end{multline}

Let us define functions $A_{k,j}(z,w)$ by 
\[
\bar \partial_w^k  \big( \bar \partial_w \theta e^{m(z-w) \theta} \big) = 
\bigg[\sum_{j=0}^k m^j(z-w)^j A_{k,j}(z,w) \bigg]e^{m(z-w) \theta}. 
\]
The functions $A_{k,j}$ are analytic in $z$ and $\bar w$,  
and $A_{k,k} = (\bar \partial_w \theta)^{k+1}$. We can now rewrite \eqref{Rqmformula} as
\begin{equation} R_{q,m}(z,w)  
= m \sum_{k=0}^{q-1} \sum_{j=0}^k
\frac{q! (-1)^k}{(q-1-k)! (k+1)! k!}
(\bar z - \bar w)^k m^j (z-w)^j A_{k,j}(z,w)   
\label{Rqmformula3}
\end{equation}

We say that a term $A(z,w)$ is of order $l- \frac12 k - \frac12 j$, if we can write 
\begin{equation} \label{factorization1}
A(z,w)= m^{l} (\bar z - \bar w)^k (z-w)^j X_{l, k, j}(z,w), 
\end{equation}
where $X_{l,k,j}(z,w)$ is analytic in $z$, real-analytic in $w$ and cannot be divided by 
$\bar z - \bar w$ or $z-w$ so that the divided function would also satisfy these two properties. 
We also use the notation
\[
\mathrm{ord}[A] = l-\frac12 k - \frac12 j. 
\]
We define the order of a finite sum of such terms to be the maximum of the orders 
of the individual terms. 

The formula \eqref{Rqmformula3} shows that all terms in $R_{q,m}$ are of order $\leq 1$, 
and the order $1$ terms are those for which $j=k$. The order $1$ terms can be written 
more compactly as 
\[ m \bar \partial_w \theta L^1_{q-1} \big( m \bar \partial_w \theta |z-w|^2 \big), \]  
where $L^1_{q-1}$ is the associated Laguerre polynomial 
with parameter $1$ and degree $q-1$. The remaining terms are of order $\leq \frac12$. 

We will need a generalization of the proposition \ref{partintprop}. 
\begin{prop} \label{partintpropgeneralq}
For any $C^q$-smooth function $A: \mathbb{D}(z_0, r) \times \mathbb{D}(z_0, r) \to \mathbb{C}$, 
there exists $\delta > 0$ such that for any   
$u \in A^2_{q,mQ}$ and $z \in \mathbb{D}(z_0, \frac13 r)$ we have 
\begin{multline} \label{partintlemmageneralq}
\int u(w) \chi(w) m^q (z-w)^q A(z,w) e^{m(z-w) \theta} \diff A(w) \\
= \int u(w) \chi(w) \bigg[ \sum_{j=0}^{q-1} m^j(z-w)^j \sum_{k=0}^{q-j} X_{k,j}(z,w) \bar \partial_w^k A(z,w) \bigg]
\times  e^{m(z-w) \theta} \diff A(w) \\
+ \mathrm{O}( \|u \|_m e^{\frac12 m Q(z) - \delta m})
\end{multline}
for certain functions $X_{k,j}(z,w)$ which are analytic in $z$ and $\bar w$ and can be written 
as a rational functions of $\bar \partial_w$-derivatives of $\bar \partial_w \theta$. The number 
$\delta$ can depend on $Q$, $z_0$, $r$ and $A$. 
\end{prop}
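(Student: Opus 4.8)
The plan is to argue by induction on $q$, the one elementary move being to trade a factor $m(z-w)$ for an application of $\bar\partial_w$. Indeed, since $\bar\partial_w(z-w)=0$ and $\bar\partial_w\theta\neq 0$ on $\mathbb D(z_0,r)$ by condition $(3)$, we have $\bar\partial_w e^{m(z-w)\theta}=m(z-w)\bar\partial_w\theta\,e^{m(z-w)\theta}$, hence $m^q(z-w)^q A\,e^{m(z-w)\theta}=m^{q-1}(z-w)^{q-1}\tfrac{A}{\bar\partial_w\theta}\,\bar\partial_w e^{m(z-w)\theta}$; integrating $u\chi$ against this and integrating by parts (there is no boundary term, as $\chi$ has compact support) yields
\[
\int u\chi\, m^q(z-w)^q A\,e^{m(z-w)\theta}\,\diff A
=-\int \bar\partial_w\!\Big(u\chi\,m^{q-1}(z-w)^{q-1}\tfrac{A}{\bar\partial_w\theta}\Big)\,e^{m(z-w)\theta}\,\diff A .
\]
The derivative $\bar\partial_w$ then falls on $\chi$, on $u$, or on $A/\bar\partial_w\theta$. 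The term where it falls on $\bar\partial_w\chi$ (supported in $\mathbb D(z_0,r)\setminus\mathbb D(z_0,\tfrac23 r)$) is $\mathrm{O}(\|u\|_m e^{\frac12 mQ(z)-\delta m})$ by Lemma \ref{neglilemma}; the term where it falls on $A/\bar\partial_w\theta$ equals $-\int u\chi\,m^{q-1}(z-w)^{q-1}\big(\tfrac1{\bar\partial_w\theta}\bar\partial_w A-\tfrac{\bar\partial_w^2\theta}{(\bar\partial_w\theta)^2}A\big)e^{m(z-w)\theta}\diff A$, which is exactly the $j=q-1$ contribution to the claimed expansion (note $k\le 1=q-(q-1)$). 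For $q=1$ this already finishes the argument, since then $u$ is analytic and the remaining term, where $\bar\partial_w$ falls on $u$, vanishes; the case $q=2$ reproduces Proposition \ref{partintprop}.

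For the inductive step I would assume the statement for $q-1$. The only term left over above is $-\int(\bar\partial_w u)\chi\,m^{q-1}(z-w)^{q-1}\tfrac{A}{\bar\partial_w\theta}\,e^{m(z-w)\theta}\diff A$, whose integrand is $(q-1)$-analytic in $w$. Since $\bar\partial_w u$ has (local) $L^2(e^{-mQ})$-norm at most $\mathrm{O}(m)\|u\|_m$ by Lemma \ref{pointwiselemma2}, I would apply the inductive hypothesis to this integral with $u$ replaced by $\bar\partial_w u$ and $A$ replaced by the $C^{q-1}$-smooth function $A/\bar\partial_w\theta$; to keep all norms finite one should run the whole induction with local $L^2$-norms on disks shrinking by $\mathrm{O}(m^{-1/2})$ at each level, which is permitted exactly as in the remark following the analytic local-kernel computation and is harmless for large $m$. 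Expanding each $\bar\partial_w^k(A/\bar\partial_w\theta)$ by the Leibniz rule into a combination $\sum_{l\le k}(\text{rational in }\bar\partial_w\text{-derivatives of }\bar\partial_w\theta)\,\bar\partial_w^l A$ reduces this term to finitely many integrals $\int(\bar\partial_w u)\chi\,m^j(z-w)^j\,W\,\bar\partial_w^l A\,e^{m(z-w)\theta}\diff A$ with $0\le j\le q-2$ and $0\le l\le q-1-j$. On each such integral, one integration by parts moving $\bar\partial_w$ off $u$ produces, apart from a negligible $\bar\partial_w\chi$-term (again Lemma \ref{neglilemma}), the integral
\[
-\int u\chi\,m^j(z-w)^j\,\bar\partial_w\!\big(W\,\bar\partial_w^l A\big)\,e^{m(z-w)\theta}\,\diff A
\]
together with, from $\bar\partial_w$ hitting $e^{m(z-w)\theta}$,
\[
-\int u\chi\,m^{j+1}(z-w)^{j+1}\,W\,\bar\partial_w\theta\,\bar\partial_w^l A\,e^{m(z-w)\theta}\,\diff A .
\]
Both now carry a plain $u$; in the first the number of $\bar\partial_w$'s on $A$ is at most $l+1\le q-j$ with $j\le q-1$, and in the second at most $l\le q-1-j=q-(j+1)$ with $j+1\le q-1$. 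Hence every surviving term belongs to $\sum_{j=0}^{q-1}m^j(z-w)^j\sum_{k=0}^{q-j}X_{k,j}\,\bar\partial_w^k A$ with the $X_{k,j}$ rational in the $\bar\partial_w$-derivatives of $\bar\partial_w\theta$ (and so with the stated analyticity), which completes the induction.

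The part I expect to require the most care is the bookkeeping of the two indices, the power $j$ of $m(z-w)$ and the number $k$ of $\bar\partial_w$'s landing on $A$: the induction closes precisely because the inductive hypothesis delivers terms with $j\le q-2$, so that the single extra integration by parts — which raises $j$ to $j+1$ and, separately, may raise the $A$-derivative count by one — can push neither $j$ beyond $q-1$ nor $k$ beyond $q-j$; in particular $\bar\partial_w^q A$ occurs only in the $j=0$ terms, which is exactly why $A\in C^q$ is both sufficient and necessary. The remaining points are routine: that $\bar\partial_w u$ is $(q-1)$-analytic with local norm $\mathrm{O}(m\|u\|_m)$ (Lemma \ref{pointwiselemma2}), that the radii lost over the $q$ levels of the induction total $\mathrm{O}(qm^{-1/2})$ and are absorbed for $m$ large, and that the coefficients generated by the iterated Leibniz expansions and divisions by powers of $\bar\partial_w\theta$ stay rational in the $\bar\partial_w$-derivatives of $\bar\partial_w\theta$.
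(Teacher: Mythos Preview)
Your argument is correct, and the index bookkeeping in the last paragraph is exactly the point that makes it close. The organization, however, differs from the paper's. The paper does not induct on $q$; it runs a direct iteration on an index $j=0,1,\dots,q$: at stage $j$ one holds an integral of $\bar\partial_w^{\,j}u$ against $m^{q-j}(z-w)^{q-j}\,A/(\bar\partial_w\theta)^{j}\,e^{m(z-w)\theta}$, trades one factor $m(z-w)$ for $\tfrac{1}{\bar\partial_w\theta}\bar\partial_w$ acting on the exponential, and integrates by parts once. This produces a term with $\bar\partial_w^{\,j+1}u$ (passed to stage $j+1$; the loop terminates at $j=q$ because $\bar\partial_w^{\,q}u=0$) and a term with $\bar\partial_w^{\,j}u$ against $m^{q-j-1}(z-w)^{q-j-1}\,\bar\partial_w\!\big[A/(\bar\partial_w\theta)^{j+1}\big]$, which is immediately integrated by parts $j$ times back to plain $u$ and kept. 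All error contributions are absorbed directly by Lemma~\ref{neglilemma}, which already handles integrands containing $\bar\partial_w^{\,k}u$ for $k\le q-1$; in particular the paper never introduces local norms, shrinking disks, or Lemma~\ref{pointwiselemma2}. Your route via the inductive hypothesis applied to the $(q-1)$-analytic function $\bar\partial_w u$, followed by one more integration by parts, is legitimate but slightly more circuitous: the localization you invoke for $\bar\partial_w u$ is in fact avoidable (Lemma~\ref{neglilemma} already bounds the boundary terms in terms of $\|u\|_m$), and the minor slip $\|\bar\partial_w u\|_{\mathrm{loc}}=\mathrm{O}(m)\|u\|_m$ (it is $\mathrm{O}(\sqrt m)$ after integrating the pointwise bound) is harmless since $e^{-\delta m}$ absorbs any polynomial. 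Both proofs amount to the same $q$ elementary moves; the paper's iteration is simply the more economical packaging.
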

\begin{proof}
Set $j=0$. We compute using lemma \ref{neglilemma}
\begin{multline}
\int \bar \partial_w^j u(w) \chi(w) m^{q-j} (z-w)^{q-j} \frac{A(z,w)}{(\bar \partial_w \theta)^j} e^{m(z-w) \theta} \diff A(w) \\
= \int \bar \partial_w^j u(w) \chi(w)  m^{q-j-1} (z-w)^{q-j-1} \frac{A(z,w)}{(\bar \partial_w \theta)^{j+1}} \bar \partial_w\big( e^{m(z-w)\theta} \big) \diff A(w) \\
= -\int \bar \partial_w^{j+1} u(w) \chi(w)  m^{q-j-1} (z-w)^{q-j-1} \frac{A(z,w)}{(\bar \partial_w \theta)^{j+1}}  e^{m(z-w)\theta} \diff A(w) \\
- \int \bar \partial_w^j u(w) \chi(w)  m^{q-j-1} (z-w)^{q-j-1} \bar \partial_w \bigg[\frac{A(z,w)}{(\bar \partial_w \theta)^{j+1}}\bigg]  e^{m(z-w)\theta} \diff A(w) \\
+ \mathrm{O}( \|u \|_m e^{\frac12 m Q(z) - \delta m}) \\
= -\int \bar \partial_w^{j+1} u(w) \chi(w)  m^{q-j-1} (z-w)^{q-j-1} \frac{A(z,w)}{(\bar \partial_w \theta)^{j+1}}  e^{m(z-w)\theta} \diff A(w) \\
+(-1)^{j+1} \int  u(w) \chi(w)  m^{q-j-1} (z-w)^{q-j-1} \bar \partial_w^j \bigg[ \bar \partial_w \bigg(\frac{A(z,w)}{(\bar \partial_w \theta)^{j+1}}\bigg)  e^{m(z-w)\theta}\bigg] \diff A(w) \\
+ \mathrm{O}( \|u \|_m e^{\frac12 m Q(z) - \delta m}) \\
\end{multline}
The second integral will then be left as such and the first integral will be analyzed again in the same way after setting $j \to j+1$. The process stops when $j=q$. 
\end{proof}
The algorithm to compute local $q$-analytic Bergman kernels involves Taylor expanding $R_{q,m}$ in powers
of $(z-w)$ and 
using this proposition. The proposition will only be applied to functions $A(z,w)$ which 
can be written in the form \eqref{factorization1}. 
We then have 
\[ \mathrm{ord} \big[ \bar \partial_w^k A(z,w) \big] \leq \mathrm{ord} A + \frac{k}{2}, \]  
and therefore
\[ 
\mathrm{ord} \bigg[ \sum_{j=0}^{q-1} m^j (z-w)^j \sum_{k=0}^{q-j} X_{k,j}(z,w) \bar \partial_w^k A(z,w) \bigg] 
\leq \mathrm{ord} \bigg[ m^q (z-w)^q A \bigg].
\]
In other words, applying the proposition \ref{partintpropgeneralq}
to a term of the form $m^q (z-w)^q A(z,w)$ cannot increase the order. 

Suppose that we are after a local $q$-analytic Bergman kernel $\mod(m^{-\frac12})$, which will be in the form
\begin{multline}
K^{\langle q \rangle}_{q,m}(z,w) = \Lfun^{\langle q \rangle}_{q,m}(z,w)e^{mQ(z,w)} \\
= \big(m^q \Lfun_{q,0}(z,w)+m^{q-1}\Lfun_{q,1}(z,w)+\dots+ m\Lfun_{q,q-1}(z,w) \big)e^{mQ(z,w)}.
\end{multline}
Proceeding as in the case $q=2$, the first step is to Taylor expand $R_{q,m}(z,w)$ in powers of $(z-w)$ with 
\eqref{dbarthetataylor}. One gets the presentation
\begin{align} \label{Rqmdecompo}
R_{q,m}(z,w) &= \sum_{l=1}^q \sum_{k=1}^{q-1} \sum_{j=1}^{q+l-1} m^l (\bar z- \bar w)^k (z-w)^j D_{l,k,j}(z,w) \\
&+\sum_{l=1}^q  \sum_{k=1}^{q-1} m^l (\bar z- \bar w)^k (z-w)^{q+l} E_{l,k}(z,w),
\end{align}
where the functions $D_{j,k,l}$ are $q$-analytic in $(z,\bar w)$ and $E_{l.k}$ are 
$q$-analytic in $z$ and real-analytic in $w$.  
Applying the previous proposition $l$ times shows that the terms containing functions 
$E_{l.k}$ contribute $\mathrm{O}\big(m^{-1/2}\|u \|_m e^{\frac12 mQ(z)}\big)$ and can therefore 
be neglected for the purpose at hand.
 
Taylor expansion of the order $1$ terms of $R_{q,m}$ yields 
\begin{multline} 
m \bar \partial_w \theta L^1_{q-1} \big( m \bar \partial_w \theta |z-w|^2 \big) 
= m b(z,w) L^1_{q-1} \big( m b(z,w) |z-w|^2 \big) \\
+ \text{ terms of order 1/2 or less}+ \text{negligible terms}. 
\end{multline}
On the other hand, the terms which remain from the Taylor expansion of order $\leq \frac12$ terms 
of $R_{q,m}$ also are of order $\leq \frac12$. We can therefore write
\begin{multline} \label{Rqmtaylorexpansion1}
R_{q,m}(z,w) = m b(z,w) L^1_{q-1} \big( m b(z,w) |z-w|^2) \\
+ \text{ terms of order 1/2 or less}+ \text{negligible terms}. 
\end{multline}
The order $1$ terms on the right hand side are $q$ analytic in $z$ and $\bar w$, so they will be left intact in 
the subsequent steps of the algorithm. The process continues by applying prosition \ref{partintpropgeneralq} 
to those terms in \eqref{Rqmdecompo} that contain $D_{l,k,j}$ for $j \geq q$ (the other ones are either
negligible or $q$-analytic in $\bar w$) and then Taylor expanding the result in powers of $(z-w)$ again. 
Then one ends up with a presentation of the form \eqref{Rqmdecompo} again, but now 
the highest possible power for the parameter $m$ is $q-1$. 

A crucial point is that further steps of the algorithm  
cannot produce any new order $1$ terms. This follows from two facts. The first one is that 
if one Taylor expands a term $m^l(\bar z - \bar w)^k (z-w)^j X(z,w)$, where $X$ is 
some function which is $q$-analytic in $z$ and real-analytic $w$, in powers of $(z-w)$, 
then each of the resulting terms has order which is less or equal to the order of 
$m^l(\bar z - \bar w)^k (z-w)^j X(z,w)$.
The second fact is that the application of proposition \ref{partintpropgeneralq} cannot increase the order. 
Therefore, when the algorithm is finished (i.e. when all terms that are not $q$-analytic in $\bar w$
are of the form $m^{l}(\bar z- \bar w)^k(z-w)^{j}X(z,w)$ for $l \leq 0$),
we have produced a local poly-Bergman kernel 
$K^{\langle q \rangle}_{q,m}(z,w)= \Lfun^{\langle q \rangle}_{q,m}(z,w)e^{mQ(z,w)}$, so that 
\begin{equation}
 \Lfun^{(q)}_{q,m}(z,w) = m b(z,w) L^1_{q-1} \big( m b(z,w) |z-w|^2 \big) 
+ \text{ terms of order 1/2 or less}.
\end{equation}

Let us now put 
\[ z=z_0 + \frac{\xi}{\sqrt{m \Delta Q(z_0)}}, \quad 
w=z_0 + \frac{\lambda}{\sqrt{m \Delta Q(z_0)}}
\] 
and apply the Taylor expansions 
\[ 
b(z,w) = \Delta Q(z_0) + \mathrm{O}(m^{-1/2})
\]
and 
\begin{equation} \label{blowupexponential}
e^{m\mathrm{Re}Q(z,w)-\frac12m\big(Q(z)+Q(w)\big)}
= e^{-\frac12 |\xi-\lambda|^2} + \mathrm{O}\big(m^{-\frac12}\big)
\end{equation}
which hold uniformly for $(\xi, \lambda)$ in any given compact subset of $\mathbb{C}^2$ 
when $m$ is so big that $z,w \in \mathbb{D}(z_0,\frac13r)$. We get 
\begin{equation}
\frac{1}{m \Delta Q(z_0)} \bigg| K^{(q)}_{q,m}(z,w) e^{mQ(z,w)}\bigg| 
= \big|L^1_{q-1} \big(|\xi - \lambda|^2 \big)\big| e^{-\frac12 |\xi-\lambda|^2} +\mathrm{O}(m^{-\frac12}),
\end{equation} 
where the result holds uniformly for all $(\xi, \lambda)$ in any given compact subset of 
$\mathbb{C}^2$. 

To show that the same result holds when the local polyanalytic Bergman kernel $K^{(q)}_{q,m}$ is replaced by $K_{q,mQ,n}$, 
one should proceed as in section \ref{blowupsection}. 
One would also need to generalize the proposition \ref{rhoprop} to solutions of $\bar \partial^q$-equations 
for general $q$ (the special case $\rho_m=0$ would suffice here). We expect this to be a straightforward but 
rather laborious task. 

The reader may have noticed that apart from the construction of local polyanalytic Bergman kernels, 
the lemma \ref{pointwiselemma} has been a crucial ingredient in our arguments. 
The proof method presented in \cite{hh2} should work also for more general $q$-analytic functions, 
but argument would become rather cumbersome. 
Therefore, it might be of value to quickly sketch another way of proving similar statements 
which easily works for any $q$.
We fix a point $z$ and consider the Riesz decomposition
\[
Q(w)= h(w)+ \mathbf{G}[\Delta Q](w),  \quad w \in \mathbb{D}(z, m^{-1/2}),
\]
where $h$ is a harmonic function and $\mathbf{G}[\Delta Q](w)$ is the Green's potential of $\Delta Q$ in the disk
$\mathbb{D}(z, m^{-1/2})$. 
One can then find an analytic function $H_m$ such 
that $|H_m|^2= e^{-mh}$
, and we get   
\[
|H_m(w)|^2 \leq C e^{-mQ(w)} \qquad |z-w| \leq m^{-1/2},
\] 
for some constant $C$. Using this fact, one can easily make a reduction to the case $Q=0$. The desired inequalities in 
this unweighted case could be worked out with the work of Koshelev \cite{koselev}, who 
idenfied explicit reproducing kernels for polyanalytic Bergman spaces 
on the unit disk with constant weight. 

\section{Appendix: the third term of the bianalytic Bergman kernel}
We will here compute the third term of the local bianalytic Bergman kernel. The Taylor expansion \ref{dbarthetataylor}, lemma \ref{neglilemma} and proposition \ref{partintprop} 
will be casually used without further notice. We start as before by 
Taylor expanding \eqref{bianalrepro}. 
\begin{multline} \label{thirdterm1}
u(z) 
= \int_{\mathbb{C}} u(w) \chi(w)  \bigg[ 2m \bigg( b + \frac12 (w-z) \partial_z b) -m (\bar z - \bar w) 
(\partial_w b + \frac12 (w-z)\bar \partial_w \partial_z b \bigg) \\
- m^2|z-w|^2b^2 
+ m^2(z-w)^2 A_1(z,w) + m(z-w)^2 B_1(z,w) \\
+ m^2(z-w)^3 C_1(z,w)  \bigg]
e^{m(z-w) \theta} \diff A(w) + \mathrm{O} \big( \| u \|_m e^{\frac12 m Q(z)} m^{-\frac32} \big),
\end{multline}
where 
\begin{multline}
A_1(z,w) = (\bar z - \bar w) b \partial_z b \quad B_1(z,w) =  \frac13 \partial_z^2 b - \frac16 (\bar z - \bar w) \bar \partial_w \partial_z^2 b, \\ 
\quad C_1(z,w) = (\bar w - \bar z) \bigg( \frac13 b \partial_z^2 b + \frac14 (\partial_z b)^2 \bigg).
\end{multline}
Only these last three terms require further analysis.  We analyze $B_1$ first:
\begin{multline}
m \int_{\mathbb{C}} u(w) \chi(w) (z-w)^2 B_1(z,w) e^{m(z-w) \theta} \diff A(w) \\
= -\int_{\mathbb{C}} u(w) \chi(w)  \bigg[ \frac1m \bar \partial_w^2 \frac{B_1}{(\bar \partial_w \theta)^2}
+ (z-w)\bigg(  2 \frac{\bar \partial_w B_1}{\bar \partial_w \theta}-3 \frac{B_1 \bar \partial_w^2 \theta }{(\bar \partial_w \theta)^2}
\bigg) \bigg] e^{m(z-w)\theta} \diff A(w) \\
+ \mathrm{O} \big( \| u \|_m e^{\frac12 m Q(z)} e^{-\delta m} \big) \\
= \int_{\mathbb{C}}u(w) \chi(w) (z-w) \bigg[
-\frac23 \frac{\bar \partial_w \partial_z b}{b}
- \frac13 \frac{\bar \partial_w \partial_z^2 b}{b}
+ \frac{\partial_z^2 b \cdot \bar \partial_w b}{b^2} \\
+ (\bar z - \bar w) \bigg( \frac13 \frac{\bar \partial_w^2 \partial_z^2 b}{b}
+ \frac12 \frac{\bar \partial_w \partial_z^2 b \cdot \bar \partial_w b}{b^2} \bigg) \bigg] e^{m(z-w) \theta} \diff A(w)
+ \mathrm{O} \big( \| u \|_{m} e^{\frac12 m Q(z)} m^{-\frac32} \big).
\end{multline}
We turn to the term with $C_1$. 
\begin{multline} \label{C2}
\int_{\mathbb{C}} u(w) \chi(w) m^2(z-w)^3 C_1 e^{m(z-w) \theta} \diff A(w) \\
= -\int_{\mathbb{C}} u(w) \chi(w) \bigg[ (z-w) \bar \partial_w^2 \frac{C_1}{(\bar \partial_w \theta)^2} + m(z-w)^2 \bigg( 2 \frac{\bar \partial C_1}{\bar \partial_w \theta} - 3 \frac{C_1 \bar \partial_w^2 \theta}{(\bar \partial_w \theta)^2} \bigg) \bigg]
e^{m(z-w)\theta} \diff A(w) \\
+ \mathrm{O} \big( \| u \|_m e^{\frac12 m Q(z)} e^{-\delta m} \big) \\
= -\int_{\mathbb{C}} u(w) \chi(w) \bigg[ (z-w) \bar \partial_w^2 \frac{C_1}{b^2} 
+ m(z-w)^2 C_2(z,w) \bigg]
e^{m(z-w)\theta} \diff A(w),  \\
\end{multline}
where
\begin{multline}
-\bar \partial_w^2 \frac{C_1}{b^2}
= -\bar \partial_w^2 \bigg[ (\bar w - \bar z) \bigg( 
\frac13 \frac{\partial_z^2 b}{b} + \frac14 \frac{(\partial_z b)^2}{b^2}
\bigg) \bigg] \\
= - \frac23 \frac{\bar \partial_w \partial_z^2 b}{b}
- \frac{\partial_z b \cdot \bar \partial_w \partial_z b}{b^2}
+ \frac{(\partial_z b)^2 \cdot \bar \partial_w b}{b^2} 
+\frac23 \frac{\partial_z^2 b \cdot \bar \partial_w b}{b^2} \\
+(\bar z - \bar w) \bigg[ 
\frac13 \frac{\bar \partial_w^2 \partial_z^2 b}{b}
-\frac23 \frac{\bar \partial_w \partial_z^2 b \cdot \bar \partial_w b}
{b^2} 
-\frac13 \frac{\partial_z^2 b \cdot \bar \partial_w^2 b}{b^2}\\
+\frac23 \frac{\partial_z^2 b \cdot (\bar \partial_w b)^2}{b^3}
+ \frac12 \frac{(\bar \partial_w \partial_z b)^2}{b^2}
+ \frac12 \frac{\partial_z b \cdot \bar \partial_w^2 \partial_z b}{b^2}
- \frac{\partial_z b \cdot \bar \partial_w \partial_z b \cdot 
\bar \partial_w b}{b^3}\\
- \frac{\partial_z b \cdot \bar \partial_w \partial_z b \cdot \bar \partial_w b}{b^3} - \frac12 \frac{(\partial_z b)^2 \bar \partial_w^2 b}{b^3}
+ \frac32 \frac{(\partial_z b) (\bar \partial_z b)^2}{b^4} \bigg],
\end{multline}
and 
\begin{multline}
C_2 :=  2 \frac{\bar \partial_w C_1}{\bar \partial_w \theta} 
- 3 \frac{C_1 \bar \partial_w^2 \theta}{(\bar \partial_w \theta)^2} \\
= \frac23 \partial_z^2b + \frac12 \frac{(\partial_z b)^2}{b} 
+ (\bar w- \bar z) \bigg[ \frac 23 \frac{\bar \partial_w b\cdot \partial_z^2 b}{b} 
+ \frac23 \cdot \bar \partial_w \partial_z^2 b
+ \frac{ \partial_z b \cdot \bar \partial_w \partial_z b }{b} \bigg] \\
-(\bar w - \bar z) \bigg( \frac{\partial_z^2 b}{b} 
+ \frac34 \frac{(\partial_z b)^2}{b^2} \bigg) \bar \partial_w b.
\end{multline} 
We continue by analyzing \eqref{C2} by with proposition \ref{partintprop}. 
\begin{multline}
-\int_{\mathbb{C}} u(w) \chi(w) m(z-w)^2 C_2 e^{m(z-w)\theta} \diff A(w) \\
= 
\int_{\mathbb{C}} u(w) \chi(w) \bigg[ \frac1m \bar \partial_w^2 \frac{C_2}{(\bar \partial_w \theta)^2} + 
(z-w) \bigg( 2 \frac{\bar \partial_w C_2}{\bar \partial_w \theta} 
- 3 \frac{C_2 \bar \partial_w^2 \theta}{(\bar \partial_w \theta)^2} \bigg) \bigg] e^{m(z-w) \theta} \diff A(w) \\
+ \mathrm{O} \big( \| u \|_{m} e^{\frac12 m Q(z)} e^{-\delta m} \big) \\
= 
\int_{\mathbb{C}} u(w) \chi(w) \bigg[ \frac1m \bar \partial_w^2 \frac{C_2}{(\bar \partial_w \theta)^2} + 
(z-w) \bigg( 2 \frac{\bar \partial_w C_2}{b} 
- 3 \frac{C_2 \bar \partial_w b}{b^2} \bigg) \bigg] e^{m(z-w) \theta} \diff A(w) \\
+ \mathrm{O} \big( \| u \|_{m} e^{\frac12 m Q(z)} m^{-\frac32} \big).
\end{multline}
The first term is negligible for our purposes. The second term equals 
\begin{multline}
2 \frac{ \bar \partial_w C_2}{b} - 3 \frac{ C_2 \bar \partial_w b}{b^2} \\
=  \frac83 \frac{\bar \partial_w \partial_z^2 b}{b}
+ 4 \frac{ \partial_z b \cdot \bar \partial_w \partial_z b}{b^2}
-\frac83 \frac{\bar \partial_w b \cdot \partial_z^2 b}{b^2}
-4 \frac{ (\partial_z b)^2 \cdot \bar \partial_w b}{b^3} \\
+ (\bar w - \bar z) \bigg[
-\frac23 \frac{\bar \partial_w^2 b \cdot \partial_z^2 b}{b^2}
-\frac83 \frac{\bar \partial_w b \cdot \bar \partial_w \partial_z^2 b}{b^2}
+\frac53 \frac{(\bar \partial_w b)^2 \cdot \partial_z^2 b}{b^3} +
\frac43 \frac{\bar \partial_w^2 \partial_z^2 b}{b} 
+2 \frac{ (\bar \partial_w \partial_z b)^2}{b^2} \\
+2 \frac{\partial_z b \cdot \bar \partial_w^2 \partial_z b}{b^2} 
-8 \frac{\partial_z b \cdot \bar \partial_w \partial_z b \cdot \bar \partial_w b}{b^3}
-\frac32 \frac{(\partial_z b)^2 \bar \partial_w^2 b}{b^3} 
+ \frac{21}{4} \frac{(\partial_z b)^2 (\bar \partial_w b)^2}{b^4} \bigg].
\end{multline}
We now turn to the term $A_1$ in \eqref{thirdterm1}. 
\begin{multline}
\int_{\mathbb{C}} u(w) \chi(w) m^2(z-w)^2 A_1 \diff A \\
= \int_{\mathbb{C}} u(w) \chi(w) \bigg[ - \partial_w^2 \frac{A_1}{(\bar \partial_w \theta)^2 } 
+ m(z-w) \bigg( -2 \frac{ \bar \partial_w A_1}{\bar \partial_w \theta} + 3 \frac{A_1 \bar \partial_w^2 \theta}{(\bar \partial_w \theta)^2} \bigg) \bigg] e^{m(z-w) \theta} \\ 
+ \mathrm{O} \big( \| u \|_{m} e^{\frac12 m Q(z)} e^{-\delta m} \big) \\
= \int_{\mathbb{C}} u(w) \chi(w) \bigg\{ A_2 + m(z-w) \bigg[ - 2\frac{1}{b} 
\bar \partial_w \bigg( (\bar z - \bar w) \partial_z b \cdot b \bigg) 
+ 3 (\bar z - \bar w) \frac{\partial_z b \cdot \bar \partial_w b}{b} \bigg] \\
+ m(z-w)^2 A_3 \bigg\} e^{m(z-w) \theta} \diff A(w) 
+ \mathrm{O} \big( \| u \|_{m} e^{\frac12 m Q(z)} m^{-\frac32} \big), \\ 
\end{multline}
where
\begin{multline} \label{A2}
A_2 :=  -\bar \partial_w^2 \bigg[ (\bar z - \bar w) \partial_z b \cdot b 
\bigg( \frac{1}{b^2} - (w-z) \frac{ \partial_z b}{b^3} \bigg) \bigg] \\
= 2 \frac{\bar \partial_w \partial_z b}{b} 
-2\frac{\partial_z b \cdot \bar \partial_w b}{b^2} 
\\
+(\bar z - \bar w)
\bigg[ - \frac{\bar \partial_w^2 \partial_z b}{b}
+ 2 \frac{\bar \partial_w \partial_z b \cdot \bar \partial_w b}{b^2}
+ \frac{\partial_z b \cdot \bar \partial_w^2 b}{b^2}
-2 \frac{\partial_z b \cdot (\bar \partial_w b)^2}{b^3} \bigg] \\
+(w-z) \bigg\{ -2 \frac{\partial_z b \cdot \bar \partial_w \partial_z b}{b^2}
+2 \frac{(\partial_z b)^2 \cdot \bar \partial_z b}{b^3}
\\
+(\bar z - \bar w) 
\bigg[ 2 \frac{(\bar \partial_w \partial_z b)^2}{b^2}
+2 \frac{\partial_z b \cdot \bar \partial_w^2 \partial_z b}{b^2}
-8 \frac{\partial_z b \cdot \bar \partial_w \partial_z b \cdot 
\bar \partial_w b}{b^3}  \\
-2 \frac{(\partial_z b)^2 \cdot \bar \partial_w^2 b}{b^3}
+6 \frac{(\partial_z b)^2 \cdot (\bar \partial_w b)^2}{b^4} 
\bigg] \bigg\}
\end{multline}
and 
\begin{multline}
A_3 := -\frac{\partial_z b}{b^2} \bar \partial_w \bigg( (\bar z - \bar w) \partial_z b \cdot b \bigg) 
+3(\bar z - \bar w) \frac{ (\partial_z b)^2 \bar \partial_w b}{b^2} + \frac32 (\bar z - \bar w) \frac{ \partial_z b \cdot \bar \partial_w \partial_zb}{b} \\
=\frac{(\partial_z b)^2}{b} + (\bar z - \bar w)
\bigg[ -\frac52 \frac{\partial_z b \cdot \bar \partial_w \partial_z b}{b}
+2 \frac{(\partial_z b)^2 \bar \partial_w b}{b^2} \bigg].
\end{multline}
We continue with the term $A_3$. 
\begin{multline}
\int_{\mathbb{C}} u(w) \chi(w) m(z-w)^2 A_3 e^{m(z-w) \theta} \diff A(w) \\
= \int_{\mathbb{C}} u(w) \chi(w) \bigg\{- \frac1m \bar \partial_w^2 
\frac{A_3}{\bar \partial_w \theta} + (z-w) 
\bigg[ -2 \frac{\bar \partial_w A_3}{(\bar \partial_w \theta)^2} 
+ 3 \frac{A_3 \bar \partial_w^2 \theta}{\bar \partial_w \theta}\bigg] \bigg\} \\
e^{m(z-w)} \diff A(w) + \mathrm{O} \big( \| u \|_{m} e^{\frac12 m Q(z)} e^{-\delta m} \big) \\
= \int_{\mathbb{C}} u(w) \chi(w) \bigg\{- \frac1m \bar \partial_w^2 
\frac{A_3}{b^2} + (z-w) 
\bigg[ -2 \frac{\bar \partial_w A_3}{b} 
+ 3 \frac{A_3 \bar \partial_w b}{b}\bigg] \bigg\} 
e^{m(z-w)} \diff A(w) \\ 
+ \mathrm{O} \big( \| u \|_{m} e^{\frac12 m Q(z)} m^{-\frac32} \big).
\end{multline} 
The first term is negligible, and the second term equals 
\begin{multline} \label{A4}
 -2 \frac{\bar \partial_w A_3}{b^2} 
+ 3 \frac{A_3 \bar \partial_w b}{b}
= -9 \frac{\partial_z b \cdot \bar \partial_w \partial_z b}{b^2}
+9 \frac{(\partial_z b)^2 \bar \partial_w b}{b^3} \\
+( \bar z - \bar w) \bigg[ -5\frac{(\bar \partial_w \partial_z b)^2}{b^2}
-5 \frac{\partial_z b \cdot \bar \partial_w^2 \partial_z b}{b^2}
-\frac{41}{2} \frac{\partial_z \cdot \bar \partial_w \partial_z b \cdot \bar \partial_w b}{b^3} 
\\
- 4 \frac{(\partial_z b)^2 \bar \partial_w^2 b}{b^3}
+14 \frac{(\partial_z b)^2 \cdot (\bar \partial_w b)^2}{b^4} \bigg].
\end{multline} 
This concludes the analysis of $A_1$ from \eqref{thirdterm1}. We combine everything together within \eqref{thirdterm1} 
to get the third 
term (i.e. constant order contribution) for the bianalytic Bergman kernel. We omit the laborious summation.  
\[\Lfun_{2,2} = 2 \bar \partial_w \partial_z \log b + 
(\bar w - \bar z) \bar \partial_w^2 \partial_z \log b
+ (z-w) \bar \partial_w \partial_z^2 \log b
+|z-w|^2 M(z,w), \]
where 
\begin{multline}
M = 
+\frac32 \frac{\bar \partial_w^2 \partial_z b \cdot \partial_z b}{b^2} 
-\frac{13}{2} \frac{\partial_z b \cdot \bar \partial_w \partial_z b \cdot \bar \partial_w b}{b^3}
+\frac32 \frac{(\bar \partial_w \partial_z b)^2}{b^2}\\
-\frac{(\partial_z b)^2 (\bar \partial_w^2 b)}{b^3}
+ \frac{17}{4} \frac{(\partial_z b)^2 \cdot (\bar \partial_w b)^2}{b^4}
-\frac23 \frac{\partial_z^2 \bar \partial_w^2 b}{b}
+\frac32 \frac{\partial_z^2 \bar \partial_w b \cdot \bar \partial_w b}{b^2} \\
-\frac{(\partial_z^2 b)(\bar \partial_w b)^2}{b^3}
+\frac13 \frac{\bar \partial_w^2 b \cdot \partial_z^2 b}{b^2}.
\end{multline}
\subsection*{Acknowledgements}
The author would like to thank H\aa kan Hedenmalm for helpful suggestions, and Bo Berndtsson for a
useful discussion at CRM in Barcelona.


\begin{thebibliography}{1}
\bibitem{abreu} L. Abreu, \emph{Sampling and interpolation in Bargmann-Fock spaces of polyanalytic functions}, 
Appl. Comp. Harm. Anal. {\bf 29} (2010), 287--302.

\bibitem{agz}
G. W. Anderson, A. Guionnet, O. Zeitouni,  \textit{An introduction to random 
matrices}. Cambridge Studies in Advanced Mathematics, {\bf118}. Cambridge 
University Press, Cambridge, 2010. 

\bibitem{ahm1} 
Y. Ameur, H. Hedenmalm, N. Makarov, \emph{Berezin transform in polynomial Bergman spaces},
Comm. Pure Appl. Math. {\bf 63} (2010), no. 12, 1533--1584.

\bibitem{ahm2} 
 Y. Ameur, H. Hedenmalm, N. Makarov, \emph{Fluctuations of eigenvalues of random normal
matrices}, Duke Mathematical Journal {\bf 159} (2011), 31--81.

\bibitem{ahm3} Y. Ameur, H. Hedenmalm, N. Makarov, \emph{Random normal matrices 
and Ward identities}, preprint at  arXiv:1109.5941.

\bibitem{ameurortega} Y. Ameur, J. Ortega-Cerd\`{a}, 
\emph{ Beurling-Landau densities of weighted Fekete sets and correlation kernel estimates}, J. Funct. Anal. {\bf 263}  (2012), no. 7, 1825--1861.

\bibitem{berman}
R.Berman, 
\textit{Bergman kernels for weighted polynomials and weighted equilibrium measures of $\mathbb{C}^n$}, Indiana University Mathematics Journal {\bf 58} (2009), issue 4.

\bibitem{berman2}
R. Berman, 
\textit{Determinantal point processes and fermions on complex manifolds: Bulk universality}, preprint at arXiv:0811.3341.

\bibitem{bbs} R. Berman, B. Berndtsson, J. Sj\"ostrand, 
\emph{A direct approach to Bergman kernel asymptotics for positive line bundles}, Arkiv f\"or matematik {\bf 46} (2008), no. 2, 197--217. 

\bibitem{bor} A. Borodin, \textit{Determinantal Point Processes}. 
arXiv:0911.1153v1. 

\bibitem{cat} D. Catlin, \textit{The Bergman kernel and a theorem of Tian}, Analysis and geometry in several complex variables, 
(Katata, 1997), 1999, 1--23.

\bibitem{deift} P. Deift, \textit{Orthogonal polynomials and random 
matrices: a Riemann-Hilbert approach}. Courant Lecture Notes in Mathematics, 
{\bf3}. New York University, Courant Institute of Mathematical Sciences, 
New York; American Mathematical Society, Providence, RI, 1999. 

\bibitem{deiftgioev} P. Deift, D. Gioev, \textit{Random matrix theory: 
invariant ensembles and universality}. Courant Lecture Notes in Mathematics, 
{\bf18}. Courant Institute of Mathematical Sciences, New York; American 
Mathematical Society, Providence, RI, 2009. 

\bibitem{deift2} Deift, P., \textit{Universality for mathematical and physical systems}, 
Congress of Mathematicians. Vol. I, 125--152, Eur. Math. Soc., Z\"urich, 2007.

\bibitem{dunne} G.V. Dunne, \textit{
International Journal of Modern Physics B}, vol. 8 (1994), 1625--1638.

\bibitem{englis} M. Engli\^s, \emph{The asymptotics of a Laplace integral on a K\"ahler manifold}, 
J. Reine Angew. Math. {\bf 528} (2000), 1--39.

\bibitem{Forr} P.J. Forrester,  \textit{Log-gases and random matrices}.
London Mathematical Society Monograph Series, {\bf 34}. Princeton University
Press, Princeton, NJ, 2010.

\bibitem{HedMak1} H. Hedenmalm, N. Makarov,  \textit{Quantum Hele-Shaw flow}.
arXiv: math/0411437.

\bibitem{HedMak2} H. Hedenmalm, N. Makarov, 
\textit{Coulomb gas ensembles and Laplacian growth}. Proc. London Math. Soc., 
to appear.

\bibitem{HedShi} H. Hedenmalm, S. Shimorin, \textit{Hele-Shaw flow on 
hyperbolic surfaces}. J. Math. Pures Appl. (9) {\bf81} (2002), no. 3, 
187--222.

\bibitem{hh} A. Haimi, H. Hedenmalm, \textit{The Polyanalytic Ginibre Ensembles}, preprint at arXiv:1106.2975.

\bibitem{hh2} A. Haimi, H. Hedenmalm , \textit{Asymptotic expansion of Polyanalytic Bergman kernels}, preprint.

\bibitem{hkpv} J. B. Hough, M.  Krishnapur, Y. Peres, B. Vir\'{a}g,  
\textit{Determinantal processes and independence}. Probab. Surv. {\bf3} 
(2006), 206--229. 

\bibitem{joh} K. Johansson, \textit{On fluctuations of eigenvalues of random Hermitian 
matrices}, Duke Math. J. {\bf91} (1998), no.1, 151--204. 

\bibitem{koselev} 
A. D. Koshelev, \textit{The kernel function of a Hilbert space of functions that are polyanalytic
in the disc}, Dokl. Akad. Nauk SSSR {\bf232} (1977), no. 2, 277--279

\bibitem{ks} 
D. Kinderlehrer, G. Stampacchia, \textit{An introduction to variational inequalities and their applications. Pure and Applied
Mathematics}, 88. Academic Press, 1980.

\bibitem{lindholm}
N. Lindholm, \emph{Sampling in weighted $L^p$ spaces of entire functions in $\mathbb{C}^N$ and estimates 
of the Bergman kernel}, J. Funct. Anal {\bf 182} (2001), 390--426. 

%


\bibitem{macchi} O. Macchi, \textit{The coincidence approach to stochastic 
point processes}. Advances in Appl. Probability {\bf7} (1975), 83--122.

\bibitem{mehta} M. L. Mehta, \textit{Random matrices}.
Third edition. Pure and Applied Mathematics (Amsterdam), {\bf142}. 
Elsevier/Academic Press, Amsterdam, 2004. 

\bibitem{mo} Z. Mouayn, \emph{New formulae representing magnetic Berezin transforms as functions of the
Laplacian on $\mathbb{C}^N$}, preprint at arXiv:1101.0379.

\bibitem{mm} X. Ma, G. Marinescu, \textit{
Holomorphic Morse inequalities and Bergman kernels}. Birkh\"auser Verlag AG, Basel, 2007.

\bibitem{ortegaseip} J. Ortega-Cerd\`{a}, K. Seip, \emph{Beurling-type density theorems for weighted $L^p$ spaces of entire functions},
J. Analyse Mathematique {\bf 75} (1998), 247--266.

\bibitem{st} E. B. Saff, V. Totik, \textit{Logarithmic potentials with external fields}. 
Grundlehren der mathematischen Wissenschaften, {\bf 316}. Springer, Berlin, 1997

\bibitem{tian} G. Tian, \emph{On a set of polarized K\"ahler metrics on algebraic manifolds}, J. Diff. Geom. {\bf 32} (1990), 99--130.

\bibitem{vasil} N. L. Vasilevski, \textit{Poly-Fock spaces} Oper. Theory Adv. Appl. {\bf 117} 
 (2000), 371--386.

\bibitem{yau} S.-T. Yau, \emph{Nonlinear analysis in geometry}, L'Enseignement Math. {\bf 33} (1987), 109--158


\bibitem{zabro1} A. Zabrodin,  \textit{Matrix models and growth processes: 
from viscous flows to the quantum Hall effect}. 
Applications of random matrices in physics, 261--318, NATO Sci. Ser. II 
Math. Phys. Chem., {\bf221}, Springer, Dordrecht, 2006.
arXiv:hep-th/0412219v1.

\bibitem{zel} S. Zelditch, \emph{Szeg\"o kernel and a theorem of Tian}, Internat. Math. Res. Notices{\bf 6} (1998), 317--331.


\end{thebibliography}
\end{document}